\documentclass[a4paper]{article}
\usepackage[authoryear,round]{natbib}
\usepackage{main}
\usepackage[left=1.in, right=1.in, bottom=1.5in, top=1.5in]{geometry}
\setcitestyle{authoryear,round,citesep={;},aysep={,},yysep={;}}
\let\cite\citep

\author{
Daiki Iwade\thanks{
Graduate School of Information Science and Technology, The University of Tokyo, Tokyo, Japan (\href{mailto:iwadedaiki@g.ecc.u-tokyo.ac.jp}{\texttt{iwadedaiki@g.ecc.u-tokyo.ac.jp}})}
\and
Shota Takahashi\thanks{
Graduate School of Information Science and Technology, The University of Tokyo, Tokyo, Japan (\href{mailto:shota@mist.i.u-tokyo.ac.jp}{\texttt{shota@mist.i.u-tokyo.ac.jp}})}
\and
Akiko Takeda\thanks{
Graduate School of Information Science and Technology, The University of Tokyo, Tokyo, Japan (\href{mailto:takeda@mist.i.u-tokyo.ac.jp}{\texttt{takeda@mist.i.u-tokyo.ac.jp}})}~\thanks{Center for Advanced Intelligence Project, RIKEN, Tokyo, Japan}}
\title{Linesearch-Free Nonlinearly Preconditioned GD with Semiautomatic Geometry Design}
\date{\today}
\begin{document}

\maketitle

\begin{abstract}
Nonlinear preconditioning makes it possible to adapt gradient updates to the growth of the objective's curvature, but combining its design with suitable stepsize selection is challenging. We propose \emph{adaptive dual preconditioned gradient descent} (adaptive DPGD), which combines curvature-based preconditioner design with a linesearch-free stepsize rule based on local information. For convex objectives, we establish linear convergence under local strong convexity of the objective. By adapting the stepsize rule, we extend the method to weakly convex objectives and establish asymptotic stationarity without global Lipschitz smoothness. Moreover, our local assumptions give greater freedom in preconditioner design. We exploit this freedom to develop a semiautomatic recipe guided by the objective's curvature, yielding a family of preconditioners that includes those underlying normalized and hyperbolic gradient descent. Experiments on convex and weakly convex problems with real data demonstrate the effectiveness of the preconditioner design and adaptive stepsize selection.
\end{abstract}

\section{Introduction}\label{sec:introduction}

We consider the unconstrained optimization problem
\begin{equation}\label{eq:intro-problem}
    \min_{x\in\RR^d} \quad f(x),
\end{equation}
where $f:\RR^d\to\RR$ is differentiable.
The basic gradient descent (GD) iteration $x_{k+1}=x_k-\gamma_{k+1} \nabla f(x_k)$ follows the Euclidean gradient and scales it by a scalar stepsize.

Nonlinear preconditioning allows the gradient magnitude and, more generally, its direction to be adapted to the objective's curvature.
Dual preconditioned gradient descent (DPGD)~\citep{maddison2021} uses a convex preconditioning function
$\phi:\RR^d\to\RR\cup\{+\infty\}$, minimized at the origin, and updates
\[
    x_{k+1}
    =x_k-\gamma_{k+1}\nabla\phi(\nabla f(x_k)).
\]
The choice of $\phi$ determines the nonlinear geometry of the update.
\citet{oikonomidis2025nonlinearly,oikonomidis2026nonlinearly} have shown that
this framework includes updates underlying normalized gradient
descent (NGD), hyperbolic gradient descent (HGD), and memoryless variants of AdaGrad \citep{adagrad11} and
Adam \citep{kingma2017adammethodstochasticoptimization}.
The DPGD guarantees of~\citet{maddison2021}, however, start from a prescribed
preconditioner and require a global dual relative smoothness condition between $f$ and
$\phi$, while providing limited guidance on how $\phi$ should be designed.
Meanwhile, linesearch-free adaptive stepsizes have been developed for
Euclidean GD using local curvature information
\citep{MM20}, with recent extensions to Bregman settings
\citep{adabregprox2025}.

\begin{table}[tb]
\centering
\caption{Representative methods with convergence guarantees.
NL: nonlinear preconditioning; LS: linesearch stepsize. In the Assumptions columns, Local indicates the required  conditions need only hold on relevant bounded sets. 
cvx: convex; scvx: strongly convex; wcvx: weakly convex; ncvx: nonconvex; Dual for $\nabla f$ denotes relative smoothness/strong convexity in dual space; Anisotropic, smoothness through a preconditioner-dependent nonlinear upper bound on $f$. 
}
\label{tab:combined-comparison}
\begingroup\setlength{\tabcolsep}{2.5pt}
\begin{tabular*}{\textwidth}{@{\extracolsep{\fill}}lclcllc@{}}
\toprule
\multirow{2}{*}{Method} & \multirow{2}{*}{NL} & \multirow{2}{*}{Stepsize} & \multicolumn{3}{c}{Assumptions} & \multirow{2}{*}{Convergence} \\
\cmidrule(lr){4-6}
 & & & Local & class of $f$  & $\nabla f$ smooth & \\
\midrule
GD & \xmarkr & Fixed/LS & \xmarkr/\cmarkg & scvx & Lipschitz & Linear\\
Adaptive GD\textsuperscript{a} & \xmarkr & Adaptive & \cmarkg & scvx & Lipschitz & Linear \\
DPGD\textsuperscript{b} & \cmarkg & Fixed & \xmarkr & Dual scvx & Dual & Linear \\
DPGD-LS\textsuperscript{b} & \cmarkg & LS & \xmarkr & cvx & Dual & $\phi(\nabla f(x_K))-\phi(0)=O(K^{-1})$ \\
AdaPGNC\textsuperscript{c} & \xmarkr & Adaptive & \xmarkr & wcvx & Lipschitz & $\min_{k\le K}\|\nabla f(x_k)\|=O(K^{-1/2})$ \\
HGD\textsuperscript{d} & \cmarkg & Fixed & \xmarkr & ncvx & Anisotropic & $\min_{k\le K}\|\nabla f(x_k)\|=O(K^{-1/2})$ \\
\midrule
\multirow{2}{*}{\textbf{Adaptive DPGD}} & \multirow{2}{*}{\cmarkg} & \multirow{2}{*}{Adaptive} & \cmarkg & scvx & Dual & Linear \\
 & & & \cmarkg & wcvx & Lipschitz & $\nabla f(x_k)\to0$ \\
\bottomrule
\end{tabular*}
\endgroup
\smallskip
\resizebox{\linewidth}{!}{%
\textsuperscript{a}\citet{MM20}; \textsuperscript{b}\citet{maddison2021}; \textsuperscript{c}\citet{ye2025simpleadaptiveproximalgradient}; \textsuperscript{d}\citet{oikonomidis2025nonlinearly}.%
}
\end{table}

Combining these two ideas is nontrivial. Euclidean adaptive GD estimates local curvature directly from changes in gradients and iterates, whereas DPGD nonlinearly transforms the gradient through $\nabla\phi$. Its behavior therefore depends on the interaction between the curvature of $f$ and the geometry induced by $\phi$, rather than on either one alone. Consequently, a direct transfer of Euclidean adaptive rules does not provide sufficient control of the update. Our key observation is that this interaction can instead be quantified through local primal--dual Bregman curvature information. This leads to our central question:
\begin{quote}
\emph{Can nonlinear preconditioners be designed from objective curvature
while their stepsizes are adapted from local information, without global
compatibility constants or repeated linesearch?}
\end{quote}

\paragraph{Contributions.}
We answer this question affirmatively by proposing \emph{adaptive dual preconditioned gradient descent} (adaptive DPGD),
which uses local Bregman curvature information for the stepsize rule. The resulting local convergence conditions provide greater freedom in the choice of $\phi$, which we exploit to construct nonlinear preconditioners systematically from simple curvature models.
\begin{itemize}
    \item \textbf{Adaptive stepsizes for DPGD.}
    We control the growth of the stepsize and adjust it using local curvature estimates expressed through Bregman divergences, which measure deviations from first-order approximations. The stepsize is updated directly, without a linesearch.
    \item \textbf{Convergence analysis.}
    For convex objectives, we establish linear convergence under local strong convexity of $f$ and $\phi$ and local dual relative smoothness, without a global dual relative smoothness constant unlike~\citet{maddison2021}. By adapting the stepsize rule to a weak convexity parameter, we preserve the DPGD update and establish $\phi(\nabla f(x_k))\to\phi(0)$ and $\nabla f(x_k)\to0$ without global Lipschitz smoothness.
    \item \textbf{Semiautomatic preconditioner design.}
    Local dual relative smoothness gives greater freedom in preconditioner design. We develop a semiautomatic construction using the normal and tangential inverse curvatures of radial models of nonquadratic growth. We establish that the resulting families satisfy the preconditioner assumptions of our convergence theory. We recover the preconditioners underlying normalized and hyperbolic gradient descent.
\end{itemize}

 \paragraph{Relation to prior work} Linesearch-free stepsize adaptation based on local curvature was developed for Euclidean GD by \citet{MM20} and extended to Bregman proximal-gradient methods by \citet{adabregprox2025}. Building on the gradient-space specialization of Bregman adaPG, we establish linear convergence for adaptive DPGD under local dual relative smoothness and local strong convexity. \citet{maddison2021} introduced DPGD under global dual relative smoothness, while \citet{oikonomidis2025nonlinearly,oikonomidis2026nonlinearly} developed broader nonlinear-preconditioning frameworks under global generalized or anisotropic smoothness. For weakly convex objectives, the adaptive method of \citet{ye2025simpleadaptiveproximalgradient} also requires global smoothness. Further comparisons are given in \cref{app:preconditioning-related-work}.

\section{Preliminaries}\label{sec:preliminaries}
Additional definitions and auxiliary results are collected in \cref{app:technical-preliminaries}.
When a minimizer exists, write $x_\star\in\amin f$ and $f_\star=f(x_\star)$.
For $h:\RR^d\to\RR\cup\setE{+\infty}$, let $\Dom h$ denote its effective domain and $\Int\Dom h$ its interior. For $a\in\RR$, write $[a]_+:=\max\setE{a,0}$
and adopt the convention $c/0:=+\infty$ for $c>0$.
For open $\Omega\subset\RR^d$ and $k\in\NN$, $C^k(\Omega)$ denotes real-valued functions on $\Omega$ with continuous derivatives up to order $k$.

\begin{definition}[Weak convexity]\label{def:weak-convexity}
   A function $f:\RR^d\to\RR\cup\setE{+\infty}$ is $\eta$-weakly convex, for $\eta\ge0$, if
    $x\longmapsto f(x)+\frac{\eta}{2}\norm{x}^2$
   is convex on $\RR^d$~\citep[see, e.g.,][]{PaquetteWeak18}. The case $\eta=0$ is convexity.
\end{definition}

\begin{notation}\label{def:bregman-divergence}
Let $h:\RR^d\to\RR\cup\{+\infty\}$ be proper and lower semicontinuous, with nonempty $\Omega_h:=\operatorname{int}\operatorname{dom}h$, and suppose $h\in C^1(\Omega_h)$. For $x,y\in\Omega_h$, write
\begin{equation}\label{eq:bregman-divergence}
D_h(x,y):=h(x)-h(y)-\langle\nabla h(y),x-y\rangle,
\qquad
\Delta_h(x,y):=D_h(x,y)+D_h(y,x).
\end{equation}
For $x,y,z\in\Omega_h$, the following three-point identity holds:
\begin{equation}\label{eq:bregman-three-point}
\langle\nabla h(y)-\nabla h(x),z-x\rangle
=D_h(x,y)+D_h(z,x)-D_h(z,y).
\end{equation}
We refer to $D_h$ as the Bregman divergence generated by $h$ and to $\Delta_h$ as its symmetrization. When $h$ is convex, both are nonnegative.
\end{notation}

For a differentiable $f:\RR^d\to\RR$, let $\mathcal P$ denote the class of proper, lower semicontinuous, and convex functions $\phi:\RR^d\to\RR\cup\{+\infty\}$ satisfying $\nabla f(\RR^d)\subset\Omega_\phi:=\Int\Dom\phi$ and $\phi\in C^1(\Omega_\phi)$.
The next definition localizes the global dual relative smoothness condition introduced by \citet[Prop.~3.3]{maddison2021}, allowing the constant to depend on the compact set.
\begin{definition}[Local dual relative smoothness]\label{def:local-dual-relative-smoothness}
    Let $f\in C^1(\RR^d)$ be convex and $\phi\in\mathcal P$. We say that $\phi$ is locally dual relatively smooth with respect to $f$ if, for every nonempty compact set $\mathcal K\subset\RR^d$, there exists $L_{\mathcal K}>0$ such that
    \begin{equation}\label{eq:local-dual-relative-smoothness}
       D_\phi(\nabla f(x),\nabla f(y))
        \le L_{\mathcal K}D_f(y,x),
        \qquad x,y\in\mathcal K.
    \end{equation}
\end{definition}
The corresponding local comparison bounds for the weakly convex setting, together with standard bounds on compact sets used in the convergence proofs, are collected in \cref{app:technical-preliminaries}.

\section{Adaptive DPGD Algorithms}\label{sec:adaptive-dpgd-algorithms}
Fix a known $\eta\ge0$ and set $g_\eta(x):=f(x)+\frac{\eta}{2}\norm{x}^2$, so that $g_0=f$.
The update and stepsize rules use only function values and first derivatives. The following regularity conditions are imposed for the convergence analysis.
\begin{assump}[Objective and preconditioner]\label{assump:main}
    \leavevmode
    \begin{enumerate}
        \item $f\in C^2(\RR^d)$ is level bounded, and $\nabla^2g_\eta(x)\succ0$ for every $x\in\RR^d$.
        \item $\phi\in\mathcal P$ and $\amin\phi=\{0\}$.
        \item If $\eta=0$, $\phi$ is locally dual relatively smooth with respect to $f$. If $\eta>0$, $\nabla^2f$ and $\nabla\phi$ are locally Lipschitz on $\RR^d$ and $\Omega_\phi$, respectively.
    \end{enumerate}
\end{assump}
Condition (i) gives local strong convexity of $g_\eta$ and ensures that $f$ attains its minimum; fix $x_\star\in\amin f$.
For $\eta=0$, level boundedness and existence of a unique minimizer are equivalent under $\nabla^2f\succ0$~\citep[Thm.~2.6 and Prop.~3.23]{Rockafellar1997-de}.
Condition (ii) makes $\nabla\phi$ a natural preconditioner for driving $\nabla f$ toward $0$.
Condition (iii) retains the weaker convex regularity and ensures that $\nabla\phi\circ\nabla f$ is locally Lipschitz and
$D_\phi(\nabla f(x),\nabla f(y))\le L_{\mathcal K}D_{g_\eta}(y,x)$ on each compact $\mathcal K\subset\RR^d$.
These bounds are established in \cref{prop:local-regularity} in \cref{app:technical-preliminaries}.

\subsection{Adaptive DPGD}\label{subsec:adaptive-dpgd}
Define $T_k(x):=x-\gamma_k\nabla\phi(\nabla f(x))$. For positive stepsizes, the DPGD update is
\begin{equation}\label{eq:update}
    \xf=\hf(\x)=\x-\gf\np{\x},
    \qquad \beta_{k+1}:=\frac{\gamma_{k+1}}{\gamma_k}.
\end{equation}
By \cref{assump:main}(ii), $x_k=x_{k-1}$ is equivalent to $\nabla f(x_{k-1})=0$.
We terminate at stationary points and define the following quantities at nonstationary iterations, with $\delta>0$:
\begin{equation}\label{eq:local-estimates}
    \widehat L_k^{\mathrm d,\eta}:=\frac{\Delta_\phi(\nabla f(x_k),\nabla f(x_{k-1}))}{\Delta_{g_\eta}(x_k,x_{k-1})},
    \qquad
    \mathcal R^\eta_{k,\delta}:=\frac{2D_{g_\eta}(x_k+\delta[T_k(x_k)-T_k(x_{k-1})],x_k)}
    {\delta^2\Delta_{g_\eta}(x_k,x_{k-1})},
\end{equation}
where $\Delta_h$ for some $h$ is in \eqref{eq:bregman-divergence}.
Set $\zeta_k^\eta:=\frac{D_{g_\eta}(x_{k-1},x_k)}{D_{g_\eta}(x_k,x_{k-1})}$ and $m_k^\eta:=\frac{\Delta_{g_\eta}(x_k,x_{k-1})}{\norm{x_k-x_{k-1}}^2}$.
Local estimates use $g_\eta$, while updates use $\nabla f$.
The symmetrized comparison bounds $\widehat L_k^{\mathrm d,\eta}$ on compact sets.

Initialize $\beta_0=\beta_1=1$ and $\gamma_1=\gamma_0$.
For $\eta>0$, fix $s\ge0$ and $\beta_{\mathrm{cap}}\in(1,3)$.
For $k\ge0$, define
\begin{equation}\label{eq:stepsize-growth}
    \bar\beta_{k+1}:=
    \begin{cases}
        \sqrt{1+\beta_k},&\text{if }\eta=0,\\
        \min\{1+s\gamma_k,\beta_{\mathrm{cap}}\},&\text{if }\eta>0.
    \end{cases}
\end{equation}
For $k\ge1$, set
\begin{equation}\label{eq:stepsize-rule}
    \beta_{k+1}:=
    \begin{cases}
        \displaystyle\min\Biggl\{
            \bar\beta_{k+1},\;
            \frac{\zeta_k^0}{1+\zeta_k^0}
            \frac{1}{2\bar\beta_{k+1}
                [\mathcal R^0_{k,2\bar\beta_{k+1}}-(1-\gamma_k\widehat L_k^{\mathrm d,0})]_+}
        \Biggr\},&\text{if }\eta=0,\\[3ex]
        \displaystyle\min\Biggl\{
            \bar\beta_{k+1},\;
            \frac{\zeta_k^\eta}{1+\zeta_k^\eta}
            \frac{2(1-\beta_k/\bar\beta_k)+\eta(1+\zeta_k^\eta)/(m_k^\eta\zeta_k^\eta)}
            {[\bar\beta_{k+1}\mathcal R^\eta_{k,\bar\beta_{k+1}}
                -1+\gamma_k\widehat L_k^{\mathrm d,\eta}+\eta/m_k^\eta]_+}
        \Biggr\},&\text{if }\eta>0.
    \end{cases}
\end{equation}
The two cases use different Lyapunov arguments; substituting $\eta=0$ into the positive-$\eta$ formula does not recover the convex rule.
When $\eta>0$ and $s=0$, $\bar\beta_{k+1}=1$ and the stepsizes are nonincreasing.
We present Adaptive DPGD with these stepsizes in \cref{def:adadpgdupdate}.
\begin{algorithm}[!thbp]
    \caption{Adaptive DPGD}\label{def:adadpgdupdate}\label{def:weak-update-rule}
    \DontPrintSemicolon
    \KwIn{$x_0\in\RR^d$, $\gamma_0>0$, $\eta\ge0$; for $\eta>0$: $s\ge0$, $1<\beta_{\mathrm{cap}}<3$}
    Initialize $\beta_0=\beta_1=1$, $\gamma_1=\gamma_0$, and $\bar\beta_1$ by \eqref{eq:stepsize-growth} with $k=0$\;
    $x_1\gets x_0-\gamma_1\np{x_0}$\;
    \For{$k=1,2,\ldots$}{
        Compute \eqref{eq:local-estimates} and $\zeta_k^\eta$; if $\eta>0$, also compute $m_k^\eta$\;
        Set $\beta_{k+1}$ using \eqref{eq:stepsize-growth}--\eqref{eq:stepsize-rule}\;
        $\gamma_{k+1}\gets\beta_{k+1}\gamma_k$\;
        $x_{k+1}\gets x_k-\gamma_{k+1}\np{x_k}$\;
    }
\end{algorithm}

\section{Convergence Analysis}\label{sec:convergence-analysis}
If $x_k=x_{k-1}$, then $x_k$ is stationary, \ie, $\nabla f(x_k)=0$, and, when $\eta=0$, the unique minimizer. Unless stated otherwise, we consider an infinite nonstationary sequence with $x_k\neq x_{k-1}$ for every $k\ge1$.
We present the basic identities, Lyapunov inequalities, and principal convergence results. Complete proofs are given in \cref{app:technical-preliminaries,app:convex-convergence-proofs,app:weak-convergence-proofs}.

We first establish two-step identities shared by the convex and weakly convex analyses.
Define the gap from the optimal value of the preconditioning function by $G_k:=\phi(\nabla f(x_k))-\phi(0)$.
By \cref{assump:main}(ii), $G_k\ge0$. Define the cross term
\begin{equation}\label{eq:def-cross-term}
    \chi_{k+1}:=\beta_{k+1}
    \inpr{T_k(x_{k-1})-T_k(x_k)}{\nabla f(x_k)-\nabla f(x_{k+1})}.
\end{equation}

These identities hold without convexity in both settings; see \cref{app:common-two-step-proof} for a proof.

\begin{lemma}[Basic two-step identities]\label{lem:basic-two-step-identities}
   Let $\setE{x_k}$ be generated by \eqref{eq:update} with positive stepsizes $\setE{\gamma_{k+1}}$.
   Then, for every $k\ge1$,
    \begin{equation}\label{eq:common-gap-identity}
        G_{k-1}-G_k
        =\frac{\Delta_f(x_k,x_{k-1})-\gamma_k\Delta_\phi(\nabla f(x_k),\nabla f(x_{k-1}))}{\gamma_k}
        +D_\phi(\nabla f(x_{k-1}),\nabla f(x_k)).
    \end{equation}
    Furthermore, if $\bar x\in\RR^d$ satisfies $\nabla f(\bar x)=0$, then
    \begin{equation}\label{eq:common-distance-identity}
        D_f(x_{k+1},\bar x)+\gamma_{k+1}G_k+D_f(x_k,x_{k+1})
        =D_f(x_k,\bar x)+\chi_{k+1}-\gamma_{k+1}D_\phi(0,\nabla f(x_k)).
    \end{equation}
\end{lemma}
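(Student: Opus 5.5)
Both identities are direct algebraic consequences of the update \eqref{eq:update} and the definition of $D_h$; no convexity is needed. Throughout I abbreviate $u_k:=\nabla f(x_k)$, so that $x_k-x_{k-1}=-\gamma_k\nabla\phi(u_{k-1})$ for $k\ge1$ and $G_k=\phi(u_k)-\phi(0)$.

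\emph{Gap identity \eqref{eq:common-gap-identity}.} I will expand the symmetrized divergence of $f$ as an inner product and substitute the update:
\[
\Delta_f(x_k,x_{k-1})=\langle u_k-u_{k-1},x_k-x_{k-1}\rangle=\gamma_k\langle\nabla\phi(u_{k-1}),u_{k-1}-u_k\rangle .
\]
Next I write the definition of $D_\phi(u_k,u_{k-1})$ in the form
\[
\langle\nabla\phi(u_{k-1}),u_{k-1}-u_k\rangle=D_\phi(u_k,u_{k-1})+G_{k-1}-G_k .
\]
Dividing the first display by $\gamma_k$ gives $\Delta_f/\gamma_k=G_{k-1}-G_k+D_\phi(u_k,u_{k-1})$. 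Subtracting $\Delta_\phi(u_k,u_{k-1})=D_\phi(u_k,u_{k-1})+D_\phi(u_{k-1},u_k)$ from both sides and rearranging yields \eqref{eq:common-gap-identity}.

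\emph{Distance identity \eqref{eq:common-distance-identity}.} Since $\nabla f(\bar x)=0$, we have $D_f(x,\bar x)=f(x)-f(\bar x)$. Hence
\[
D_f(x_k,\bar x)-D_f(x_{k+1},\bar x)-D_f(x_k,x_{k+1})=\langle u_{k+1},x_k-x_{k+1}\rangle=\gamma_{k+1}\langle\nabla\phi(u_k),u_{k+1}\rangle .
\]
This is the three-point identity \eqref{eq:bregman-three-point} specialized to $h=f$. Independently, expanding the definitions gives
\[
\gamma_{k+1}\bigl(G_k+D_\phi(0,u_k)\bigr)=\gamma_{k+1}\langle\nabla\phi(u_k),u_k\rangle .
\]
The claim therefore reduces to the single identity $\chi_{k+1}=\gamma_{k+1}\langle\nabla\phi(u_k),u_k-u_{k+1}\rangle$.

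\emph{Cross term.} The one step requiring care is computing the cross term, and it is where $k\ge1$ is used, since $x_{k-1}$ must exist. Using $x_{k-1}-x_k=\gamma_k\nabla\phi(u_{k-1})$, we get
\[
T_k(x_{k-1})-T_k(x_k)=x_{k-1}-x_k-\gamma_k\bigl(\nabla\phi(u_{k-1})-\nabla\phi(u_k)\bigr)=\gamma_k\nabla\phi(u_k).
\]
The $\nabla\phi(u_{k-1})$ terms cancel. Multiplying by $\beta_{k+1}$ turns $\gamma_k$ into $\gamma_{k+1}$, so by \eqref{eq:def-cross-term} $\chi_{k+1}=\gamma_{k+1}\langle\nabla\phi(u_k),u_k-u_{k+1}\rangle$, exactly as required. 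Substituting this back and rearranging gives \eqref{eq:common-distance-identity}.

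I expect no genuine obstacle beyond bookkeeping of signs and of the argument order in $D_\phi$.
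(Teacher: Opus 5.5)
Your proof is correct and follows essentially the same route as the paper. Both rest on $T_k(x_{k-1})-T_k(x_k)=\gamma_k\nabla\phi(\nabla f(x_k))$, the three-point identity for $f$ with $\nabla f(\bar x)=0$, and the expansion $G_k+D_\phi(0,\nabla f(x_k))=\langle\nabla\phi(\nabla f(x_k)),\nabla f(x_k)\rangle$. The only cosmetic difference is in the gap identity: you expand $D_\phi(\nabla f(x_k),\nabla f(x_{k-1}))$ directly from the update and then subtract $\Delta_\phi$, whereas the paper expands $D_\phi(\nabla f(x_{k-1}),\nabla f(x_k))$ and evaluates $\langle T_k(x_{k-1})-T_k(x_k),\nabla f(x_{k-1})-\nabla f(x_k)\rangle$ in two ways.
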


\subsection{Convex objectives}\label{subsec:convex-convergence}
Throughout this subsection, $\eta=0$; we omit the superscript $0$ from the local quantities.
We combine the two identities in \cref{lem:basic-two-step-identities} and bound the cross term $\chi_{k+1}$ using the Bregman three-point identity~\eqref{eq:bregman-three-point}. This yields a decrease inequality for the Lyapunov function $\U$ defined below. The proof is given in \cref{app:proof-convex-lyapunov}.

\begin{lemma}[Lyapunov inequality]\label{lem:lyap}
Define the Lyapunov function by
    \begin{equation}\label{eq:convex-lyapunov}
        \U:=D_f(\x,\xo)+\ga(1+\rhh)\Pb+\paren*{1-\frac{\rh}{2\rhh}}D_f(\xb,\x).
    \end{equation}
Under the update rule in \cref{def:adadpgdupdate} and \cref{assump:main} with $\eta=0$, for every $k\ge1$,
    \begin{equation}\label{eq:convex-lyapunov-descent}
        \Uf\le\U-\ga(1+\rhh-\rf\rhf)\Pb\le\U.
    \end{equation}
\end{lemma}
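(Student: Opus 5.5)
The plan is to start from the distance identity \eqref{eq:common-distance-identity} of \cref{lem:basic-two-step-identities} with $\bar x=x_\star$ and drop the term $-\gamma_{k+1}D_\phi(0,\nabla f(x_k))\le0$. This gives
\[
D_f(x_{k+1},x_\star)+\gamma_{k+1}G_k+D_f(x_k,x_{k+1})\le D_f(x_k,x_\star)+\chi_{k+1}.
\]
All the work is then in bounding the cross term $\chi_{k+1}$ in a way that can be charged partly to $D_f(x_k,x_{k+1})$, partly to $D_f(x_{k-1},x_k)$, and partly to the decrease $G_{k-1}-G_k$.

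For the cross term, put $w:=T_k(x_k)-T_k(x_{k-1})$ and $\delta:=2\bar\beta_{k+1}$. Then
\[
\chi_{k+1}=\frac{\beta_{k+1}}{\delta}\langle\nabla f(x_{k+1})-\nabla f(x_k),\,(x_k+\delta w)-x_k\rangle .
\]
Apply the three-point identity \eqref{eq:bregman-three-point} with $h=f$, $x=x_k$, $y=x_{k+1}$, $z=x_k+\delta w$, and discard the nonpositive term $-D_f(z,x_{k+1})$ by convexity. This yields
\[
\chi_{k+1}\le\frac{\beta_{k+1}}{2\bar\beta_{k+1}}D_f(x_k,x_{k+1})+\frac{\beta_{k+1}}{2\bar\beta_{k+1}}D_f(x_k+\delta w,x_k).
\]
Since $\beta_{k+1}\le\bar\beta_{k+1}$, the first piece is absorbed on the left. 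This leaves the coefficient $1-\beta_{k+1}/(2\bar\beta_{k+1})$ in front of $D_f(x_k,x_{k+1})$, which is exactly the last term of $\mathcal U_{k+1}$ (taking $\rh,\rhh$ to be $\beta,\bar\beta$). By the definition of $\mathcal R_{k,\delta}$, the second piece equals $\beta_{k+1}\bar\beta_{k+1}\mathcal R_{k,2\bar\beta_{k+1}}\Delta_f(x_k,x_{k-1})$.

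Next split $\mathcal R=[\mathcal R-(1-\gamma_k\widehat L_k^{\mathrm d})]+(1-\gamma_k\widehat L_k^{\mathrm d})$ and treat the two parts separately.

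For the bracketed part, the second entry of the minimum in \eqref{eq:stepsize-rule} gives
\[
2\beta_{k+1}\bar\beta_{k+1}[\mathcal R-(1-\gamma_k\widehat L_k^{\mathrm d})]_+\le\frac{\zeta_k}{1+\zeta_k}.
\]
Combined with $\frac{\zeta_k}{1+\zeta_k}\Delta_f(x_k,x_{k-1})=D_f(x_{k-1},x_k)$, this bounds that part by $\tfrac12 D_f(x_{k-1},x_k)$. Since $\beta_k\le\bar\beta_k$, this is at most the coefficient $1-\beta_k/(2\bar\beta_k)$ carried by $D_f(x_{k-1},x_k)$ in $\mathcal U_k$.

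For the remaining part, the gap identity \eqref{eq:common-gap-identity} together with $\Delta_\phi=\widehat L_k^{\mathrm d}\Delta_f$ gives
\[
(1-\gamma_k\widehat L_k^{\mathrm d})\Delta_f(x_k,x_{k-1})=\gamma_k(G_{k-1}-G_k)-\gamma_kD_\phi(\nabla f(x_{k-1}),\nabla f(x_k))\le\gamma_k(G_{k-1}-G_k).
\]
This contributes at most $\gamma_{k+1}\bar\beta_{k+1}(G_{k-1}-G_k)$.

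Finally, collect the $G$ terms. On the right we have $\gamma_{k+1}\bar\beta_{k+1}G_{k-1}$, and on the left $\gamma_{k+1}(1+\bar\beta_{k+1})G_k$. The relation $\bar\beta_{k+1}^2=1+\beta_k$, i.e.\ $\gamma_{k+1}\bar\beta_{k+1}=\gamma_k\bar\beta_{k+1}^2\beta_{k+1}/\bar\beta_{k+1}$, is what should make $\gamma_{k+1}\bar\beta_{k+1}\le\gamma_k(1+\beta_k)$ up to the residual $\gamma_k(1+\bar\beta_k-\beta_{k+1}\bar\beta_{k+1})G_{k-1}$ appearing in \eqref{eq:convex-lyapunov-descent}.

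I expect this last coefficient bookkeeping to be the main obstacle. One has to match exactly which of $\beta$ versus $\bar\beta$ appears in $\gamma_k(1+\cdot)G_{k-1}$ and verify that $1+\bar\beta_k-\beta_{k+1}\bar\beta_{k+1}\ge0$ follows from $\beta_{k+1}\le\bar\beta_{k+1}=\sqrt{1+\beta_k}$ together with $\beta_k\le\bar\beta_k$. If the direct grouping fails, I would first retry with the free parameter $\delta$ in the Young/three-point step chosen differently. The second inequality in \eqref{eq:convex-lyapunov-descent} then follows from $G_{k-1}\ge0$ and that nonnegative coefficient.
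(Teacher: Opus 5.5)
Your proposal is correct and follows essentially the paper's proof: the same three-point bound on $\chi_{k+1}$ at $z=x_k+2\bar\beta_{k+1}[T_k(x_k)-T_k(x_{k-1})]$ and the same absorption of the $[\cdot]_+$ part through the stepsize rule and $\frac{\zeta_k}{1+\zeta_k}\Delta_f(x_k,x_{k-1})=D_f(x_{k-1},x_k)$. The only difference is ordering: you apply the gap identity after splitting $\mathcal R_{k,2\bar\beta_{k+1}}$, whereas the paper multiplies it by $\gamma_{k+1}\bar\beta_{k+1}$ at the start.

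The bookkeeping you flag as the main obstacle is in fact immediate, and no retry with a different $\delta$ is needed. The identity $\gamma_{k+1}\bar\beta_{k+1}G_{k-1}=\gamma_k(1+\bar\beta_k)G_{k-1}-\gamma_k(1+\bar\beta_k-\beta_{k+1}\bar\beta_{k+1})G_{k-1}$ makes the first inequality an exact rewrite. For the second, $\beta_{k+1}\bar\beta_{k+1}\le\bar\beta_{k+1}^2=1+\beta_k\le1+\bar\beta_k$ shows the coefficient is nonnegative, and $G_{k-1}\ge0$ finishes it.
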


This Lyapunov inequality yields the following main result.
\begin{theorem}\label{thm:main}
   Suppose that \cref{assump:main} holds with $\eta=0$. Then the sequence $\setE{\x}$ generated by \cref{def:adadpgdupdate} satisfies
    $\x\to\xo$ and $\nf(\x)\to0$.
\end{theorem}
\begin{proof}[Proof sketch of \cref{thm:main}]
    \cref{lem:lyap} gives boundedness of the iterates. On a compact set containing the sequence, $\widehat L_k^{\mathrm d}$ is uniformly bounded above, $\zeta_k$ is bounded away from $0$, and $\mathcal R_{k,2\bar\beta_{k+1}}\to1$ if $\gamma_k\to0$. These local estimates and the stepsize rule imply $\sum_k\gamma_k=\infty$. Combining the existence of a subsequence approaching stationarity with an argument about the limit of the Lyapunov function proves convergence of the entire sequence to $x_\star$. The complete proof is given in \cref{app:proof-convex-convergence}.
\end{proof}

For the convergence rate, we make an additional assumption.
\begin{assump}\label{assump:rate}
For every nonempty compact convex set $\mathcal K^*\subset\Omega_\phi$, there exists $\mu_{\mathcal K^*}>0$ such that $\phi$ is $\mu_{\mathcal K^*}$-strongly convex on $\mathcal K^*$.
\end{assump}
We will discuss the construction of such preconditioners in \cref{sec:preconditioner-design}, which shows how to construct preconditioners satisfying \cref{assump:rate}.

\begin{theorem}[R-linear convergence]\label{thm:bestiteraterate}

Suppose that \cref{assump:main,assump:rate} hold with $\eta=0$. Then there exist $C_f,C_\nabla>0$ and $\alpha\in(0,1)$ such that the sequence generated by \cref{def:adadpgdupdate} with the convex stepsize rule satisfies, for all $k\ge0$,
\begin{equation}\label{eq:linear-last-iterate}
    0\le f(x_k)-f_\star\le C_f \alpha^k,
    \qquad
    \norm{\nabla f(x_k)}\le C_\nabla \alpha^k.
\end{equation}

\end{theorem}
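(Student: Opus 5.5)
Everything happens on one compact set, so that every local constant becomes uniform. By \cref{lem:lyap} the Lyapunov function $\U$ is nonincreasing, and by \cref{thm:main} the iterates converge to $x_\star$. Hence there is a compact convex set $\mathcal K$ containing $\{x_k\}$, and also the auxiliary points $x_k+\delta[T_k(x_k)-T_k(x_{k-1})]$ entering $\mathcal R_{k,2\bar\beta_{k+1}}$. Its gradient image $\mathcal K^*:=\operatorname{conv}\nabla f(\mathcal K)\subset\Omega_\phi$ is also compact. On these sets I would fix the following constants:
\begin{itemize}
\item $\mu_f I\preceq\nabla^2 f\preceq L_f I$ on $\mathcal K$, from \cref{assump:main}(i) and continuity of $\nabla^2 f$;
\item the strong convexity modulus $\mu_{\mathcal K^*}$ of $\phi$, from \cref{assump:rate};
\item the dual relative smoothness constant $L_{\mathcal K}$ from \cref{def:local-dual-relative-smoothness}.
\end{itemize}

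The first real step is a uniform positive lower bound $\gamma_k\ge\gamma_{\min}>0$. The idea is to show that the second argument of the min in \eqref{eq:stepsize-rule} is at least $1$ whenever $\gamma_k$ is small. Three facts drive this.
\begin{itemize}
\item $\widehat L_k^{\mathrm d}\le L_{\mathcal K}$, by symmetrizing \eqref{eq:local-dual-relative-smoothness}.
\item $\zeta_k\ge\mu_f/L_f$.
\item $\mathcal R_{k,\delta}\le (L_f/\mu_f)\,\|T_k(x_k)-T_k(x_{k-1})\|^2/\|x_k-x_{k-1}\|^2$, and $T_k$ is a perturbation of the identity of size $O(\gamma_k)$. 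The latter holds because $\nabla\phi\circ\nabla f$ is Lipschitz on $\mathcal K$, using local dual smoothness together with strong convexity of $f$.
\end{itemize}
Then $\mathcal R_{k,2\bar\beta_{k+1}}-(1-\gamma_k\widehat L_k^{\mathrm d})=O(\gamma_k)$. So $\beta_{k+1}=\bar\beta_{k+1}>1$ as long as $\gamma_k\le c$, and once $\gamma_k>c$ a single step can shrink it by at most a bounded factor. This gives $\gamma_k\ge\gamma_{\min}$. The same computation bounds $\gamma_k$ from above, or at least bounds $\rho_{k+1}$, so that the weight $1+\rhh-\rf\rhf$ appearing in \eqref{eq:convex-lyapunov-descent} stays bounded away from $0$ (its positivity is built into the rule through the factor $\zeta_k/(1+\zeta_k)$).

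Next comes a \emph{contraction} $\Uf\le(1-\varepsilon)\U$, obtained by comparing every term of $\U$ with $G_{k-1}$.
\begin{itemize}
\item By $\mu_{\mathcal K^*}$-strong convexity of $\phi$ and $\nabla\phi(0)=0$, we get $G_{k-1}\ge\frac{\mu_{\mathcal K^*}}2\|\nabla f(x_{k-1})\|^2$.
\item Strong convexity and smoothness of $f$ on $\mathcal K$ give $D_f(x_k,x_\star)\le\frac{L_f}{2\mu_f^2}\|\nabla f(x_k)\|^2$.
\item $\|\nabla f(x_k)\|$ is controlled by $\|\nabla f(x_{k-1})\|$, since $x_k-x_{k-1}=-\gamma_k\nabla\phi(\nabla f(x_{k-1}))$ and $\nabla\phi$ is Lipschitz on $\mathcal K^*$ (local dual smoothness relative to a strongly convex $f$ makes $\phi^*$ locally strongly convex). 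Hence $D_f(x_k,x_\star)\lesssim G_{k-1}$.
\item Similarly $D_f(x_{k-1},x_k)\le\frac{L_f}{2}\gamma_k^2\|\nabla\phi(\nabla f(x_{k-1}))\|^2\lesssim G_{k-1}$.
\end{itemize}
Thus $\U\le C\,\gamma_k G_{k-1}$. The decrease term in \eqref{eq:convex-lyapunov-descent} is at least $c'\gamma_k G_{k-1}$, which gives $\Uf\le(1-c'/C)\U$. One subtle point must be checked: the coefficient of $D_f(x_{k-1},x_k)$ in $\U$ must be nonnegative, otherwise the upper bound on $\U$ could fail. If it is negative, dropping that term only helps the upper bound, but the lower bound used later needs $\U\ge D_f(x_k,x_\star)$.

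Finally, $\U\le C_0\alpha^k$ yields the stated bounds. First, $f(x_k)-f_\star=D_f(x_k,x_\star)\le\U$. For the gradient, use $\|\nabla f(x_k)\|^2\le 2L_f D_f(x_k,x_\star)$, which produces the rate $\alpha^{k/2}$; I would rename this rate as $\alpha$ and adjust the constants $C_f,C_\nabla$ to cover the initial indices. Alternatively the gradient bound follows from $\gamma_{k+1}G_k\le\mathcal U_{k+1}$ combined with the lower bound on $\gamma$.

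The main obstacle is the uniform lower bound on $\gamma_k$. It requires the Lipschitz-type estimate for $\nabla\phi\circ\nabla f$ and the quantitative behaviour of $\mathcal R$ as $\gamma_k\to0$, both established uniformly on $\mathcal K$; the contraction step itself is routine once these are in place.
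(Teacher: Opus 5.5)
\textbf{Gap: the one-step contraction fails.} The decrease coefficient in \eqref{eq:convex-lyapunov-descent}, $c_k:=1+\bar\beta_k-\beta_{k+1}\bar\beta_{k+1}$, is not bounded away from $0$. The factor $\zeta_k/(1+\zeta_k)$ has nothing to do with it: that factor appears only in the second candidate of \eqref{eq:stepsize-rule}, where it absorbs the $\Delta_f(x_k,x_{k-1})$ term into $(1-\beta_k/(2\bar\beta_k))D_f(x_{k-1},x_k)$.

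Because $\bar\beta_{k+1}^2=1+\beta_k$, selecting the growth candidate at step $k+1$ gives $c_k=\bar\beta_k-\beta_k$. If the growth candidate was also selected at step $k$, then $c_k=0$ exactly, and \cref{lem:lyap} yields only $\mathcal E_{k+1}\le\mathcal E_k$. Consecutive growth steps cannot be excluded; your own argument forces them whenever $\gamma_{k-1}$ and $\gamma_k$ are small. So no bound $\mathcal E_{k+1}\le(1-\varepsilon)\mathcal E_k$ is available.

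The paper's proof is built to avoid this. It keeps only $c_k\ge\bar\beta_k-\beta_k\ge0$ and telescopes over a window of $N$ steps, using $\gamma_k\beta_{k+1}=\gamma_{k+1}$:
\[
\sum_{k=n}^{n+N-1}\gamma_kc_k=\sum_{k=n}^{n+N-1}\gamma_k+\gamma_n\bar\beta_n-\gamma_{n+N}\bar\beta_{n+N}\ge N\underline\gamma-\beta_{\max}\overline\gamma .
\]
This is positive for a fixed large $N$. Combine it with $\mathcal E_k\le C_EG_{k-1}$ and with $\mathcal E_k\ge\mathcal E_{n+N}$ for $k\le n+N$, then sum the descent inequality. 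This gives $\mathcal E_{n+N}\le\kappa\mathcal E_n$ with $\kappa\in(0,1)$ independent of $n$, hence the R-linear rate.

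\textbf{Missing upper bound on $\gamma_k$.} Several of your steps need a uniform upper bound on $\gamma_k$, which you defer to ``the same computation'':
the block argument above,
your comparison $\mathcal E_k\le C\gamma_kG_{k-1}$,
and even the choice of a compact set containing the auxiliary points $x_k-2\bar\beta_{k+1}\gamma_k\nabla\phi(\nabla f(x_k))$.

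This bound does not follow from the facts you list (upper bounds on $\widehat L_k^{\mathrm d}$ and $\mathcal R$, lower bound on $\zeta_k$). It needs a \emph{lower} bound on the curvature estimate, which is the second place \cref{assump:rate} is used. From $\Delta_\phi(\nabla f(x_k),\nabla f(x_{k-1}))\ge\mu_{\mathcal K^*}\|\nabla f(x_k)-\nabla f(x_{k-1})\|^2$ and $\Delta_f(x_k,x_{k-1})\le\mu_f^{-1}\|\nabla f(x_k)-\nabla f(x_{k-1})\|^2$ you get $\widehat L_k^{\mathrm d}\ge\underline L:=\mu_{\mathcal K^*}\mu_f$. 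Then $\gamma_k\ge2/\underline L$ makes the bracket in \eqref{eq:stepsize-rule} at least $\tfrac12\gamma_k\underline L$, which forces $\gamma_{k+1}\le1/\underline L$.

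Without this, your ``bounded-factor shrink'' step also breaks down. Your bound $\mathcal R_{k,\delta}\le(L_f/\mu_f)(1+\gamma_kL_F)^2$, where $L_F$ is the Lipschitz constant of $\nabla\phi\circ\nabla f$, makes the resulting lower bound on $\beta_{k+1}$ decay like $\gamma_k^{-2}$ as $\gamma_k$ grows.

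\textbf{Smaller points.} That same bound shows only that $\mathcal R$ is bounded, not that it is close to $1$. Getting the growth branch for small $\gamma_k$ needs the Hessian-continuity argument of \cref{lem:gammatozero}. It gives $\mathcal R_{k,2\bar\beta_{k+1}}\to1$ only at an unquantified rate, since no Lipschitz Hessian is assumed when $\eta=0$. That suffices for a threshold $\gamma_{\rm safe}$, but not for the $O(\gamma_k)$ you claim. Your sign worry is moot: $\beta_k\le\bar\beta_k$ puts $1-\beta_k/(2\bar\beta_k)$ in $[1/2,1]$.
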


\begin{proof}[Proof sketch of \cref{thm:bestiteraterate}]
We do not require a positive lower bound on the one-step Lyapunov decrease coefficient. Local bounds and \cref{assump:rate} bound the stepsizes above and away from zero and give $\mathcal E_k\le C_EG_{k-1}$ for some $C_E>0$. Summing over a fixed number $N$ of iterations yields $\mathcal E_{k+N}\le\kappa\mathcal E_k$ with $\kappa\in(0,1)$, implying \eqref{eq:linear-last-iterate}. The complete proof is in \cref{app:proof-best-iterate-rate}.
\end{proof}

\subsection{Weakly convex objectives}\label{subsec:weak-convergence}
Throughout this subsection, \cref{assump:main} holds with $\eta>0$. We use the local comparison bound \eqref{eq:local-comparison}, established in \cref{prop:local-regularity} in \cref{app:technical-preliminaries}, with the convexified function $g_\eta$.
Since $x_\star$ globally minimizes $f$ on $\RR^d$, $\nabla f(x_\star)=0$ and
\begin{equation}\label{eq:weak-nonnegative-gaps}
   D_f(x_k,x_\star)
    =f(x_k)-f(x_\star)\ge0.
\end{equation}
Moreover, the definition of $g_\eta$ gives
\begin{equation}\label{eq:weak-sym-difference}
    \Delta_f(x_k,x_{k-1})
    =
    \Delta_{g_\eta}(x_k,x_{k-1})
    -\eta\norm{x_k-x_{k-1}}^2
\end{equation}
and, using \eqref{eq:local-estimates} and $m_k^\eta$,
\begin{equation}
    \Delta_f(x_k,x_{k-1})
    -\gamma_k\Delta_\phi(\nabla f(x_k),\nabla f(x_{k-1}))
    =
    \left(
        1-\gamma_k \widehat L_k^{\mathrm d,\eta}-\frac{\eta}{m_k^\eta}
    \right)
    \Delta_{g_\eta}(x_k,x_{k-1}).
    \label{eq:weak-curvature-rewrite}
\end{equation}

We next establish the Lyapunov inequality for weakly convex objectives. The proof is given in \cref{app:proof-weak-lyapunov}.
\begin{lemma}[Lyapunov inequality]\label{lem:weak-lyapunov}
   Define the Lyapunov function by
    \begin{equation}\label{eq:weak-lyapunov}
        \mathcal{E}_k^{\mathrm{weak}}
        :=
       D_f(x_k,x_\star)
        +\frac32\gamma_kG_{k-1}
        +\left(
            1-\frac{\beta_k}{\bar\beta_k}
        \right)D_{g_\eta}(x_{k-1},x_k)
        +\frac{\eta}{2}\norm{x_k-x_{k-1}}^2.
    \end{equation}
Under \cref{def:weak-update-rule} with $\eta>0$, for every $k\ge1$,
    \begin{equation}\label{eq:weak-lyapunov-descent}
        \mathcal{E}_{k+1}^{\mathrm{weak}}
        \le
        \mathcal{E}_k^{\mathrm{weak}}
        -\frac{\gamma_k}{2}(3-\beta_{k+1})G_{k-1}.
    \end{equation}
Since $\beta_{k+1}\le\beta_{\mathrm{cap}}<3$, the coefficient of $\gamma_kG_{k-1}$ in the decrease is uniformly positive.

\end{lemma}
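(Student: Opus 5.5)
The plan is to start from the distance identity \eqref{eq:common-distance-identity} of \cref{lem:basic-two-step-identities} with $\bar x=x_\star$, which is legitimate since $\nabla f(x_\star)=0$. Dropping the nonpositive term $-\gamma_{k+1}D_\phi(0,\nabla f(x_k))$ gives
\[
D_f(x_{k+1},x_\star)+\gamma_{k+1}G_k+D_f(x_k,x_{k+1})\le D_f(x_k,x_\star)+\chi_{k+1}.
\]
The first simplification concerns the cross term. Since $T_k(x_{k-1})=x_k$ and $T_k(x_k)=x_k-\gamma_k\nabla\phi(\nabla f(x_k))$, we have $T_k(x_{k-1})-T_k(x_k)=(x_k-x_{k+1})/\beta_{k+1}$. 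Hence $\chi_{k+1}=\Delta_f(x_k,x_{k+1})$.

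Next I pass from $f$ to the convex function $g_\eta$ using \eqref{eq:weak-sym-difference}, which gives $\Delta_f=\Delta_{g_\eta}-\eta\norm{\cdot}^2$. This produces the $\frac\eta2\norm{x_{k+1}-x_k}^2$ term that appears in $\mathcal E^{\mathrm{weak}}_{k+1}$.

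The key step is to bound $D_{g_\eta}(x_{k+1},x_k)$ by local information from the previous step. Take $u:=T_k(x_k)-T_k(x_{k-1})$, so that $x_{k+1}=x_k+\beta_{k+1}u$, and set $\delta=\bar\beta_{k+1}\ge\beta_{k+1}$. Applying the three-point identity \eqref{eq:bregman-three-point} to $g_\eta$ with the points $x_k$, $x_k+\beta_{k+1}u$ and $x_k+\delta u$, together with convexity of $g_\eta$ along the ray $x_k+tu$, gives
\[
D_{g_\eta}(x_{k+1},x_k)\le(\beta_{k+1}/\delta)\,D_{g_\eta}(x_k+\delta u,x_k).
\]
By definition, the right-hand side equals $\tfrac12\beta_{k+1}\bar\beta_{k+1}\mathcal R^\eta_{k,\bar\beta_{k+1}}\Delta_{g_\eta}(x_k,x_{k-1})$.

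To absorb the remaining terms I use the gap identity \eqref{eq:common-gap-identity} at index $k$, multiplied by $\tfrac32\gamma_{k+1}=\tfrac32\beta_{k+1}\gamma_k$, and rewrite its curvature term with \eqref{eq:weak-curvature-rewrite} as $(1-\gamma_k\widehat L^{\mathrm d,\eta}_k-\eta/m_k^\eta)\Delta_{g_\eta}(x_k,x_{k-1})$. This trades $\gamma_{k+1}G_k$ for $\gamma_{k+1}G_{k-1}$. Writing $\beta_{k+1}=(\beta_{k+1}/\gamma_k)\,\gamma_k$ turns the $G_{k-1}$ coefficient into $-\tfrac{\gamma_k}{2}(3-\beta_{k+1})$ relative to $\tfrac32\gamma_kG_{k-1}$, which is exactly the claimed decrease. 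The inherited memory term $(1-\beta_k/\bar\beta_k)D_{g_\eta}(x_{k-1},x_k)$ and the $\frac\eta2\norm{x_k-x_{k-1}}^2$ term from $\mathcal E^{\mathrm{weak}}_k$ must then pay for everything else that multiplies $\Delta_{g_\eta}(x_k,x_{k-1})$. I split $\Delta_{g_\eta}(x_k,x_{k-1})=(1+\zeta_k^\eta)D_{g_\eta}(x_k,x_{k-1})$ and $\norm{x_k-x_{k-1}}^2=\Delta_{g_\eta}/m_k^\eta$ so that all terms are in one unit. The leftover requirement is then a scalar inequality of the form
\[
\beta_{k+1}\bigl[\bar\beta_{k+1}\mathcal R^\eta_{k,\bar\beta_{k+1}}-1+\gamma_k\widehat L^{\mathrm d,\eta}_k+\eta/m_k^\eta\bigr]_+\le\frac{\zeta_k^\eta}{1+\zeta_k^\eta}\Bigl(2(1-\beta_k/\bar\beta_k)+\frac{\eta(1+\zeta_k^\eta)}{m_k^\eta\zeta_k^\eta}\Bigr).
\]
This holds exactly by the second branch of \eqref{eq:stepsize-rule}. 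The new memory term $(1-\beta_{k+1}/\bar\beta_{k+1})D_{g_\eta}(x_k,x_{k+1})$ should come out of the slack $D_f(x_k,x_{k+1})$ on the left-hand side, after the $\eta$-correction and the $\beta_{k+1}/\delta$ factor above.

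The main obstacle is the bookkeeping of constants. The factors $\tfrac32$, $\tfrac12$ and $\beta_{k+1}/\bar\beta_{k+1}$ must combine exactly into the numerator $2(1-\beta_k/\bar\beta_k)+\eta(1+\zeta)/(m\zeta)$. The sign of the bracket must also be handled: when it is negative, the first branch $\beta_{k+1}\le\bar\beta_{k+1}$ alone suffices. I would first verify the identity-level computation with all $\eta$ terms kept explicit, and only then apply the single inequality coming from the stepsize rule.
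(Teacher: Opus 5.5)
There is a genuine gap in how you produce the new memory term $(1-\beta_{k+1}/\bar\beta_{k+1})D_{g_\eta}(x_k,x_{k+1})$ of $\mathcal E^{\mathrm{weak}}_{k+1}$. You correctly find
\[
\chi_{k+1}=D_{g_\eta}(x_k,x_{k+1})+D_{g_\eta}(x_{k+1},x_k)-\eta\|x_{k+1}-x_k\|^2 .
\]
However, your key inequality controls only $D_{g_\eta}(x_{k+1},x_k)$. The other half, $D_{g_\eta}(x_k,x_{k+1})$, therefore enters the right-hand side with coefficient $1$. It cancels the whole $D_{g_\eta}(x_k,x_{k+1})$ contained in $D_f(x_k,x_{k+1})=D_{g_\eta}(x_k,x_{k+1})-\frac{\eta}{2}\|x_{k+1}-x_k\|^2$ on the left, so only $\frac{\eta}{2}\|x_{k+1}-x_k\|^2$ survives. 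The factor $\beta_{k+1}/\delta$ in your bound multiplies only $D_{g_\eta}(z_k,x_k)$, where $z_k:=x_k+\bar\beta_{k+1}u$, so it creates no slack. Carried through, your argument proves only
\[
\mathcal E^{\mathrm{weak}}_{k+1}-\Bigl(1-\frac{\beta_{k+1}}{\bar\beta_{k+1}}\Bigr)D_{g_\eta}(x_k,x_{k+1})\le\mathcal E^{\mathrm{weak}}_k-\frac{\gamma_k}{2}(3-\beta_{k+1})G_{k-1}.
\]
This does not iterate, because the next step consumes exactly that memory term through the numerator $2(1-\beta_{k+1}/\bar\beta_{k+1})$ of the stepsize rule.

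The fix, which is what the paper does, is to apply the three-point identity to the whole inner product rather than to a single Bregman term. Since $x_{k+1}-x_k=\frac{\beta_{k+1}}{\bar\beta_{k+1}}(z_k-x_k)$, use the identity with $x=x_k$, $y=x_{k+1}$, $z=z_k$ and drop $-D_{g_\eta}(z_k,x_{k+1})\le0$. This gives
\[
\Delta_{g_\eta}(x_k,x_{k+1})\le\frac{\beta_{k+1}}{\bar\beta_{k+1}}\bigl[D_{g_\eta}(x_k,x_{k+1})+D_{g_\eta}(z_k,x_k)\bigr].
\]
Your inequality follows from this one only after discarding the nonnegative term $(1-\beta_{k+1}/\bar\beta_{k+1})D_{g_\eta}(x_k,x_{k+1})$, and that discarded term is exactly the new memory term. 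With the factor $\beta_{k+1}/\bar\beta_{k+1}$ also on $D_{g_\eta}(x_k,x_{k+1})$, the left side keeps $(1-\beta_{k+1}/\bar\beta_{k+1})D_{g_\eta}(x_k,x_{k+1})+\frac{\eta}{2}\|x_{k+1}-x_k\|^2$. Your scalar inequality from the second branch of \eqref{eq:stepsize-rule} then closes the argument as you describe.

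Separately, the gap identity must be multiplied by $\frac{1}{2}\gamma_{k+1}$, not $\frac{3}{2}\gamma_{k+1}$, after dropping $D_\phi(\nabla f(x_{k-1}),\nabla f(x_k))\ge0$. The distance identity already supplies $\gamma_{k+1}G_k$, and adding $\frac12\gamma_{k+1}G_k$ yields the required $\frac32\gamma_{k+1}G_k$. This leaves $\frac12\beta_{k+1}\gamma_kG_{k-1}$ on the right and the curvature coefficient $-\frac{\beta_{k+1}}{2}(1-\gamma_k\widehat L_k^{\mathrm d,\eta}-\eta/m_k^\eta)$. Both are what your claimed decrease and scalar inequality presuppose. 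With $\frac32$, the decrease would instead be $\frac32\gamma_k(1-\beta_{k+1})G_{k-1}$, which can be negative.
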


The Lyapunov inequality yields the following stationarity result.
\begin{theorem}[Stationarity for weakly convex objectives]\label{thm:weak-stationarity}
   Under \cref{assump:main} with $\eta>0$ and \cref{def:weak-update-rule},
    \begin{equation}
        \phi(\nabla f(x_k))\to\phi(0). \qquad
        \text{Consequently,}\quad \nabla f(x_k)\to0\label{eq:weak-gradient-stationarity}.
    \end{equation}

\end{theorem}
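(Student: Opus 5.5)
The plan is to telescope the weakly convex Lyapunov inequality of \cref{lem:weak-lyapunov}. Then we show that the stepsizes are bounded away from zero, so that summability of $\gamma_kG_{k-1}$ forces $G_k\to0$.

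\emph{Boundedness.} Every term of $\mathcal E_k^{\mathrm{weak}}$ is nonnegative. Indeed, $D_f(x_k,x_\star)=f(x_k)-f_\star\ge0$ by \eqref{eq:weak-nonnegative-gaps}, $G_{k-1}\ge0$, and $D_{g_\eta}\ge0$ because $g_\eta$ is convex. For the coefficient we use $\beta_k\le\bar\beta_k$, which holds by the $\min$ in \eqref{eq:stepsize-rule}, for $k\ge2$ and trivially for $k=1$ since $\beta_1=1\le\bar\beta_1$. So \eqref{eq:weak-lyapunov-descent} shows $\mathcal E_k^{\mathrm{weak}}$ is nonincreasing, whence $f(x_k)\le f_\star+\mathcal E_1^{\mathrm{weak}}$. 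Level boundedness of $f$ then places all iterates in a compact set $\mathcal K$. Summing \eqref{eq:weak-lyapunov-descent} and using $3-\beta_{k+1}\ge3-\beta_{\mathrm{cap}}>0$ gives
\[
\sum_{k\ge1}\gamma_kG_{k-1}\le\frac{2\,\mathcal E_1^{\mathrm{weak}}}{3-\beta_{\mathrm{cap}}}<\infty .
\]

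\emph{Lower bound on stepsizes (main obstacle).} The goal is $\inf_k\gamma_k>0$, or at least enough control that $\gamma_k\not\to0$ along the relevant indices. The local estimates are controlled on $\mathcal K$ as follows. By \cref{prop:local-regularity}, $\widehat L_k^{\mathrm d,\eta}\le L_{\mathcal K}$. Since $\nabla^2 g_\eta$ is continuous and positive definite, it is uniformly bounded between $\mu I$ and $M I$ on a compact neighbourhood of $\mathcal K$. This gives $\zeta_k^\eta\in[\mu/M,M/\mu]$ and $m_k^\eta\in[\mu,M]$. Moreover, $\nabla\phi\circ\nabla f$ is locally Lipschitz and $\bar\beta_{k+1}\le\beta_{\mathrm{cap}}$, so
\[
\|x_k+\bar\beta_{k+1}[T_k(x_k)-T_k(x_{k-1})]-x_k\|\le C(1+\gamma_k)\|x_k-x_{k-1}\|.
\]
This keeps the evaluation point in a compact set when $\gamma_k$ is bounded, and yields $\mathcal R^\eta_{k,\bar\beta_{k+1}}\le C'(1+\gamma_kL)^2$.

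With these bounds, the denominator in \eqref{eq:stepsize-rule} is at most a constant $A$ whenever $\gamma_k\le1$. The numerator is at least $\eta(1+\zeta)/(m\zeta)\ge c>0$, because $1-\beta_k/\bar\beta_k\ge0$. Hence $\beta_{k+1}\ge\min\{1,c'\}$ when $\gamma_k$ is small, but this alone only prevents a single collapse. I would refine the argument using the fact that $\mathcal R\to1$ as $\gamma_k\to0$, since the Taylor expansion of $D_{g_\eta}$ gives $\mathcal R\approx\langle\nabla^2g_\eta\,v,v\rangle/\Delta_{g_\eta}\cdot$(scale) with $v\approx x_k-x_{k-1}$. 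Also $\gamma_k\widehat L\to0$ in that regime. So when $\gamma_k$ is small, the denominator is bounded by $[\bar\beta_{k+1}-1+\eta/m+o(1)]_+$, while the numerator is at least $\eta(1+\zeta)/(m\zeta)$. The task is then to show that the ratio exceeds $1$ after the factor $\zeta/(1+\zeta)$, so that $\beta_{k+1}\ge1$ once $\gamma_k$ falls below a threshold $\bar\gamma$.

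This comparison is delicate. A clean alternative route is to argue by contradiction: suppose $\gamma_{k}\to0$ along a subsequence where $\beta_{k+1}<1$, and exploit $\bar\beta_{k+1}=1+s\gamma_k\to1$ to get the ratio $\ge1-o(1)$. Either way, I expect $\gamma_k\ge\gamma_{\min}>0$ for all $k$, possibly with $\gamma_{\min}=\min\{\gamma_0,\bar\gamma\}\cdot\theta$ for some fixed $\theta>0$ to account for one shrinking step.

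\emph{Conclusion.} Given $\gamma_k\ge\gamma_{\min}$, summability yields $\sum_kG_{k-1}<\infty$, so $G_k=\phi(\nabla f(x_k))-\phi(0)\to0$. For the consequence, $\nabla f(x_k)$ lies in the compact set $\nabla f(\mathcal K)\subset\Omega_\phi$. Let $\bar u$ be any cluster point of $\nabla f(x_k)$. Continuity of $\phi$ on $\Omega_\phi$ gives $\phi(\bar u)=\phi(0)$, and $\amin\phi=\{0\}$ forces $\bar u=0$. Hence $\nabla f(x_k)\to0$.

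If the uniform stepsize lower bound proves too hard, a fallback is as follows. First show $\sum_k\gamma_k=\infty$, which gives $\liminf G_k=0$. Then upgrade this to a full limit using $G_{k-1}-G_k$ from \eqref{eq:common-gap-identity} together with $\|x_{k+1}-x_k\|\to0$, where the latter follows from the summable terms.
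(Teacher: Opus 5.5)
Your boundedness step, the bound $\sum_k\gamma_kG_{k-1}<\infty$, and the closing cluster-point argument are correct and agree with the paper. The gap is the stepsize step. Your main route needs $\inf_k\gamma_k>0$, which you do not prove and which the local estimates cannot deliver.

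For small $\gamma_k$, the second candidate in the stepsize rule has the form $(\eta/m_k^\eta+\text{nonneg.})/(\eta/m_k^\eta+(\bar\beta_{k+1}-1)+O(\gamma_k))$. So the guaranteed lower bound is only $1-O(\gamma_k)$, and this is sharp. Take $d=1$, $\phi=\frac12\|\cdot\|^2$, $s=0$ (so $\bar\beta_{k+1}=1$), $\beta_k=1$, and $f''\equiv h\in(-\eta,0)$ near the current points. Then $\zeta_k^\eta=1$, $m_k^\eta=h+\eta$, $\widehat L_k^{\mathrm d,\eta}=h^2/(h+\eta)$, $\mathcal R^\eta_{k,1}=(1-\gamma_kh)^2$, and the candidate equals
\[
\frac{\eta}{\eta+\gamma_k|h|(2\eta-|h|)+\gamma_k^2h^2(\eta-|h|)}<1
\qquad\text{for every }\gamma_k>0 .
\]
So there is no threshold below which $\beta_{k+1}\ge1$. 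Whenever the step direction sees negative curvature, the rule shrinks the stepsize by about $1-|h|\gamma_k$ per step, and for $s=0$ nothing ever enlarges it again. As an illustration, take $f'(x)=x^3\bigl(1+\tfrac12\sin(\omega\log|x|)\bigr)$ near $0$ with $\omega$ large. Then $f''$ changes sign infinitely often as $x_k\downarrow0$. A leading-order estimate gives accumulated shrinkage $\sum_k\gamma_k\max\{-f''(x_k),0\}\approx\int_0^{x_0}\max\{-f''(x),0\}/f'(x)\,dx$, which diverges, so $\gamma_k\to0$. This is why the paper explicitly does not rule out vanishing stepsizes. Your contradiction variant has the same defect: a ratio $\ge1-o(1)$ contradicts nothing.

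What holds, and what the paper proves, is only $\sum_k\gamma_k=\infty$. That requires a \emph{rate}, not merely $\mathcal R\to1$. In the case $\gamma_k\to0$ (the other case is trivial), the paper uses the locally Lipschitz Hessian from \cref{assump:main}(iii) to show $|\mathcal R^\eta_{k,\bar\beta_{k+1}}-1|\le C_{\mathcal R}\gamma_k$. Combined with $\bar\beta_{k+1}-1\le s\gamma_k$ and the local bounds on $\widehat L_k^{\mathrm d,\eta}$ and $m_k^\eta$, this gives $\beta_{k+1}\ge1-C_0\gamma_k$. Hence $1/\gamma_{k+1}\le1/\gamma_k+2C_0$, and the harmonic lower bound $\gamma_k\ge1/(\gamma_{k_0}^{-1}+2C_0(k-k_0))$ follows. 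An $o(1)$ deficit would not suffice: $\beta_{k+1}=1-\sqrt{\gamma_k}$ gives summable $\gamma_k\sim4/k^2$.

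Your fallback names this target but supplies no such estimate. Its second half is also not yet an argument. First, $\|x_{k+1}-x_k\|\to0$ does not follow from the Lyapunov decrease, whose only summable decrement is $\gamma_kG_{k-1}$. Second, small increments alone cannot upgrade $\liminf_kG_k=0$ to $G_k\to0$.

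The missing idea is to measure the increments in the same weights as the convergent series. Lipschitz continuity of $\phi\circ\nabla f$ on the compact set gives $|G_{k+1}-G_k|\le L_PM_{\mathcal C}\gamma_{k+1}$. So each descent of $G$ from $\ge\varepsilon$ to $\le\varepsilon/2$ forces $\sum\gamma_k\ge\varepsilon/(2L_PM_{\mathcal C})$ over indices with $G_{k-1}>\varepsilon/2$. Each such descent therefore contributes at least $\varepsilon^2/(4L_PM_{\mathcal C})$ to $\sum_k\gamma_kG_{k-1}<\infty$. Hence only finitely many such descents occur, and $G_k\to0$.
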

\begin{proof}[Proof sketch of \cref{thm:weak-stationarity}]
    \cref{lem:weak-lyapunov} gives boundedness of the iterates. If $\gamma_k\to0$, local estimates yield $\gamma_{k+1}\ge\gamma_k(1-C\gamma_k)$ eventually for some $C>0$, so $\sum_k\gamma_k=\infty$ in all cases. Combining this with $\sum_k\gamma_kG_{k-1}<\infty$ gives $\liminf_kG_k=0$. Local Lipschitz continuity of $\phi\circ\nabla f$ then yields $G_k\to0$ and \eqref{eq:weak-gradient-stationarity}. The complete proof is in \cref{app:proof-weak-stationarity}.
\end{proof}

\section{Semiautomatic Design of Preconditioners}\label{sec:preconditioner-design}
We construct nonlinear preconditioners from simple curvature models of the objective. Unlike fixed-stepsize DPGD analyses~\citep{maddison2021} that require a problem-specific global compatibility condition for a prescribed preconditioner, our local framework allows us to design $\phi$ from the curvature behavior that we wish to correct. The construction is semiautomatic: once a target model, a curvature direction, and an interpolation parameter are selected, the preconditioner is determined.

\paragraph{Design principle.}
For convex $f$, the ideal relation
$\nabla^2\phi(\nabla f(x))\approx(\nabla^2 f(x))^{-1}$
would make the Jacobian of the preconditioned direction,
$\nabla^2\phi(\nabla f(x))\nabla^2f(x)$, close to the identity.
We therefore design radial preconditioners that nonlinearly rescale gradients to compensate for nonquadratic curvature growth.
We model the growth to be corrected by a radial
target $t(x)=\tau(\|x\|)$, where $\tau:[0,\infty)\to\mathbb{R}$ is $C^2$
on $(0,\infty)$ with $\tau'(r)>0$ and $\tau''(r)>0$ for $r>0$. For $r=\|x\|>0$,
$\he t(x)=\tau''(r)\frac{xx^\top}{r^2}+\frac{\tau'(r)}{r}\paren*{I-\frac{xx^\top}{r^2}}$.
Thus the normal and tangential curvatures are  $\tau''(r)$ and $\tau'(r)/r$, respectively. We select the corresponding inverse curvature  $h_{\mathrm{N}}=(\tau''(r))^{-1}$ or $h_{\mathrm{T}}=(\tau'(r)/r)^{-1}$ and express it as a function of the gradient magnitude $\varrho:=\|\nabla t(x)\|$.

\paragraph{Interpolation and preconditioner families.}
Suppose that the selected inverse curvature, expressed as a function of $\varrho$, has a leading power-law factor $\varrho^a$ for some $a\in\mathbb{R}$. The exponent $a$ captures the asymptotic inverse-curvature growth of the target model. To preserve
unit curvature near the origin while matching this behavior for large gradients, define, for $b>0$,
\begin{equation}\label{eq:power-interpolation}
    \psi_{a,b}(\varrho):=(1+\varrho^b)^{a/b},
    \qquad \varrho\ge0.
\end{equation}
Then $\psi_{a,b}(0)=1$ and $\psi_{a,b}(\varrho)/\varrho^a\to1$ as $\varrho\to\infty$. Hence the resulting geometry is GD-like for small gradients and matches
the selected power-law factor for large gradients. The exponent $a$ is determined by the target curvature, whereas $b$ controls the transition.

We next construct a radial preconditioner $\phi(y)=\nu(\|y\|)$ whose normal or tangential curvature matches $\psi_{a,b}$. Since these curvatures are $\nu''(\varrho)$ and $\nu'(\varrho)/\varrho$, respectively, integration gives
\begin{equation}\label{eq:normal-construction}
    \nu_{\mathrm{N}}^{a,b}(\varrho):=\int_0^\varrho(\varrho-s)\psi_{a,b}(s)\,ds,
    \qquad \nu_{\mathrm{T}}^{a,b}(\varrho):=\int_0^\varrho s\psi_{a,b}(s)\,ds,
\end{equation}
and we set $\phi_{\mathrm{N}}^{a,b}(y):=\nu_{\mathrm{N}}^{a,b}(\norm{y}), \ \phi_{\mathrm{T}}^{a,b}(y):=\nu_{\mathrm{T}}^{a,b}(\norm{y})$. Thus $\phi_{\mathrm{N}}^{a,b}$ matches $\psi_{a,b}$ in its normal curvature, while $\phi_{\mathrm{T}}^{a,b}$ matches it in its tangential curvature. For the tangential family $\phi_{\mathrm{T}}^{a,b}$, we restrict to $a\ge -1$.

For weakly convex objectives, the Hessian of $f$ need not be positive definite, so the inverse-Hessian interpretation above is no longer literal. We instead use the radial target as a model of the curvature behavior to be corrected, while applying the same preconditioner construction.

\begin{table}[!htb]
\centering
\setlength{\tabcolsep}{5pt}
\renewcommand{\arraystretch}{1.15}
\caption{Preconditioning functions and related methods, where $q=p/(p-1)>1$.}
\label{tab:preconditioner-recipes}
\begin{tabularx}{\linewidth}{@{}>{\raggedright\arraybackslash}p{0.55\linewidth}>{\raggedright\arraybackslash\hyphenpenalty=10000}X@{}}
\toprule
Preconditioning function & Relation \\
\midrule
$\displaystyle\phi_{\mathrm{N}}^{0,b}(y)=\phi_{\mathrm{T}}^{0,b}(y)=\frac{\norm{y}^2}{2}\quad(b>0)$ & GD \\
\addlinespace[7pt]
$\displaystyle\phi_{\mathrm{N}}^{q-2,1}(y)=\frac{(1+\norm{y})^q-1-q\norm{y}}{q(q-1)}$ & New variant \\
\addlinespace[7pt]
$\displaystyle\phi_{\mathrm{T}}^{q-2,2}(y)=\frac{(1+\norm{y}^2)^{q/2}-1}{q}$ & $p$-norm DPGD~\citep{maddison2021} \\
\addlinespace[7pt]
$\displaystyle\phi_{\mathrm{N}}^{-1,1}(y)=(1+\norm{y})\log(1+\norm{y})-\norm{y}$ & \citet[Table~1]{oikonomidis2025nonlinearly} \\
\addlinespace[7pt]
$\displaystyle\phi_{\mathrm{N}}^{-1,2}(y)=\norm{y}\operatorname{arsinh}(\norm{y})-\sqrt{1+\norm{y}^2}+1$ & HGD~\citep{oikonomidis2025nonlinearly,oikonomidis2026nonlinearly} \\
\addlinespace[7pt]
$\displaystyle\phi_{\mathrm{T}}^{-1,1}(y)=\phi_{\mathrm{N}}^{-2,1}(y)=\norm{y}-\log(1+\norm{y})$ & NGD; exponential-penalty DPGD~\citep{Zhang2020Why,maddison2021} \\
\addlinespace[7pt]
$\displaystyle\phi_{\mathrm{T}}^{-1,2}(y)=\sqrt{1+\norm{y}^2}-1$ & AdaGrad-type~\citep{adagrad11} \\
\addlinespace[7pt]
$\displaystyle\lim_{b\to\infty}\phi_{\mathrm{T}}^{-1,b}(y)=\begin{cases}\norm{y}^2/2,&\norm{y}\le1,\\\norm{y}-1/2,&\norm{y}>1\end{cases}$ & Gradient clipping~\citep{Zhang2020Why} \\
\bottomrule
\end{tabularx}
\end{table}

\paragraph{Choosing the exponent from the target model.}
The exponent $a$ follows directly from the target curvature.
For the power-law target $\tau(r)=r^p/p$ with $p>1$, let $q:=p/(p-1)$. Since $\varrho=\tau'(r)=r^{p-1}$, both inverse curvatures are proportional to $\varrho^{q-2}$, giving $a=q-2=(2-p)/(p-1)$; for example, $p=6$ gives $a=-0.8$. For the exponential target $\tau(r)=e^r$, we have $h_{\mathrm{N}}(\varrho)=\varrho^{-1}$. Thus the normal construction gives $a=-1$. Further details are given in \cref{subsubsec:principal-targets}.

\cref{tab:preconditioner-recipes} lists selected preconditioning functions and related methods. Further examples are given in \cref{app:preconditioner-design-supplement}. The family also highlights different large-gradient behaviors. The AdaGrad-type member \(\phi_{\mathrm{T}}^{-1,2}\) asymptotically normalizes the gradient, whereas the new power-law variant \(\phi_{\mathrm{N}}^{q-2,1}\) produces a preconditioned gradient with norm of order \(\|y\|^{q-1}\), with \(q-1=1/(p-1)\). Thus the latter adapts the scaling of the preconditioned gradient to the curvature growth of the target model.

We finally verify that the constructed preconditioner families satisfy
the regularity conditions required by the convergence analyses in
\cref{sec:adaptive-dpgd-algorithms,sec:convergence-analysis}. We omit the superscripts when $a,b$ are fixed.
The proof is given in \cref{app:preconditioner-proofs}.

\begin{proposition}\label{prop:power-interpolation-preconditioner}
Let $a\in\RR$ and $b>0$, and let $\phi=\phi_{\mathrm{N}}$ or, when $a\ge-1$, $\phi=\phi_{\mathrm{T}}$. Then:
\begin{enumerate}
    \item $\phi\in C^2(\RR^d)$, $\Omega_\phi=\RR^d$, $\nabla^2\phi\succ0$, $\nabla^2\phi(0)=I$, and $\amin\phi=\{0\}$. Thus \cref{assump:main}(ii) holds and $\nabla\phi$ is locally Lipschitz.
    \item If $f$ satisfies \cref{assump:main}(i) with $\eta=0$, then \cref{assump:main,assump:rate} hold.
    \item If $f$ satisfies \cref{assump:main}(i) with $\eta>0$ and $\nabla^2f$ is locally Lipschitz, \cref{assump:main} holds.
\end{enumerate}
\end{proposition}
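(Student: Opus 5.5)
For (i), I will first record the one-dimensional facts about $\nu=\nu_{\mathrm N}$ or $\nu_{\mathrm T}$ and then lift them to the radial function $\phi(y)=\nu(\|y\|)$.

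\emph{One-dimensional facts.} Since $\psi:=\psi_{a,b}$ is continuous and positive on $[0,\infty)$, with $\psi(0)=1$, differentiating \eqref{eq:normal-construction} gives the following.
\begin{itemize}
\item Normal case: $\nu_{\mathrm N}'(\varrho)=\int_0^\varrho\psi(s)\,ds$ and $\nu_{\mathrm N}''=\psi$.
\item Tangential case: $\nu_{\mathrm T}'(\varrho)=\varrho\psi(\varrho)$, so $\nu_{\mathrm T}'(\varrho)/\varrho=\psi(\varrho)$, and $\nu_{\mathrm T}''(\varrho)=\psi(\varrho)+\varrho\psi'(\varrho)$.
\end{itemize}

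\emph{The radial Hessian.} For $y\neq0$,
\[
\nabla^2\phi(y)=\nu''(\|y\|)\frac{yy^\top}{\|y\|^2}+\frac{\nu'(\|y\|)}{\|y\|}\Bigl(I-\frac{yy^\top}{\|y\|^2}\Bigr).
\]
Positive definiteness therefore reduces to $\nu''>0$ and $\nu'(\varrho)/\varrho>0$ for $\varrho>0$.
\begin{itemize}
\item Normal case: $\nu'(\varrho)/\varrho$ is the average of $\psi$ over $[0,\varrho]$, which is positive.
\item Tangential case: $\psi'(\varrho)=a\varrho^{b-1}(1+\varrho^b)^{a/b-1}$, so
\[
1+\varrho\psi'/\psi=1+a\frac{\varrho^b}{1+\varrho^b}.
\]
This is at least $\frac{1}{1+\varrho^b}>0$ precisely because $a\ge-1$. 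This is where that restriction is used.
\end{itemize}

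\emph{$C^2$ regularity at the origin.} This is the main technical point. Both eigenvalue functions $\nu''(\varrho)$ and $\nu'(\varrho)/\varrho$ tend to $\psi(0)=1$ as $\varrho\to0^+$, in both cases. Hence $\nabla^2\phi(y)\to I$, since the two projector coefficients share the same limit. Also $\nabla\phi(y)=\nu'(\|y\|)y/\|y\|\to0$, and $\nu'(\varrho)=O(\varrho)$. A standard lemma on radial functions then gives $\phi\in C^2(\RR^d)$ with $\nabla^2\phi(0)=I$. To be careful, I will check directly that $\nabla\phi$ is differentiable at $0$ with derivative $I$: use $\nu'(\varrho)=\varrho+o(\varrho)$ and continuity of the Hessian expression away from $0$. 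Away from $0$, $\phi\in C^2$ because $\psi$ is $C^1$ on $(0,\infty)$.

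\emph{Remaining claims in (i).} Finiteness everywhere gives $\Omega_\phi=\RR^d$, so $\nabla f(\RR^d)\subset\Omega_\phi$ trivially. Strict convexity together with $\nabla\phi(0)=0$ gives $\amin\phi=\{0\}$. Continuity of $\nabla^2\phi$ gives local Lipschitzness of $\nabla\phi$. So \cref{assump:main}(ii) holds.

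For (ii), take any compact convex $\mathcal K^*$. By continuity and positive definiteness, $\lambda_{\min}(\nabla^2\phi)$ attains a positive minimum $\mu_{\mathcal K^*}$ on $\mathcal K^*$. This gives \cref{assump:rate}. For local dual relative smoothness, fix a compact $\mathcal K$ and let $\mathcal C$ be its convex hull, which is compact. The set $\nabla f(\mathcal C)$ is compact, so its convex hull $\mathcal K^\sharp$ is compact. Let $M:=\max_{\mathcal K^\sharp}\lambda_{\max}(\nabla^2\phi)$, $\Lambda:=\max_{\mathcal C}\lambda_{\max}(\nabla^2f)$ and $m:=\min_{\mathcal C}\lambda_{\min}(\nabla^2 f)>0$. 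Then
\[
D_\phi(\nabla f(x),\nabla f(y))\le\tfrac M2\|\nabla f(x)-\nabla f(y)\|^2\le\tfrac M2\Lambda^2\|x-y\|^2,
\]
while $D_f(y,x)\ge\frac m2\|x-y\|^2$. Hence $L_{\mathcal K}=M\Lambda^2/m$ works.

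For (iii), part (i) gives local Lipschitzness of $\nabla\phi$ on $\Omega_\phi=\RR^d$. Combined with the hypotheses on $f$, this gives \cref{assump:main}(i)--(iii) for $\eta>0$.
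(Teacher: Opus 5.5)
Your proposal is correct and follows essentially the same route as the paper. Both arguments use the same one-dimensional curvature formulas; your factor $1+a\varrho^b/(1+\varrho^b)$ is exactly the paper's $\frac{1+(a+1)\varrho^b}{1+\varrho^b}$, which is where $a\ge-1$ enters. Both then use the radial Hessian, whose two eigenvalue functions tend to $1$ at the origin, to get $C^2$ regularity and $\nabla^2\phi(0)=I$. For part (ii), both use the same chain $D_\phi(\nabla f(x),\nabla f(y))\le\frac{M}{2}\|\nabla f(x)-\nabla f(y)\|^2\le\frac{M\Lambda^2}{2}\|x-y\|^2\le\frac{M\Lambda^2}{m}D_f(y,x)$ on compact convex sets. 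The only cosmetic difference is that you obtain uniqueness of the minimizer from strict convexity and $\nabla\phi(0)=0$, whereas the paper uses $\nu'>0$ on $(0,\infty)$.
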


\section{Numerical Experiments}\label{sec:numerical-experiments}
We test curvature-based design and adaptive stepsizes on convex and weakly convex problems. We use Adaptive DPGD with $\phi_{\mathrm N}^{a,1}$ (ours), which performed well across the tested problems, with $a$ determined by each target. We use a linesearch only to initialize the first stepsize for Adaptive DPGD; all subsequent steps are linesearch-free. We compare it with Euclidean Adaptive DPGD, with $\phi(y)=\norm{y}^2/2$, to assess the effect of nonlinear preconditioning, and with GD Armijo~\citep{armijo66} and, for convex problems, adaptive GD of \citet{MM20}. Additional preconditioner examples and convergence comparisons are given in \cref{app:numerical-additional-results}. We use adaptive GD with $\lambda_0=10^{-10}$ and $\theta_0=+\infty$ as in \citet{MM20}, and weakly convex Adaptive DPGD with $s=0.1$ and $\beta_{\mathrm{cap}}=2$. Linesearches use dyadic trials, with Armijo coefficient $10^{-4}$. Initial line-search candidates are $1$ for $\ell_p$ loss and weakly convex problems. For powered hinge loss, the initial inverse stepsize for DPGD is $\|\nabla\phi(\nabla f(x_0))\|$, and the initial Armijo stepsize is $1/\|\nabla f(x_0)\|$.

\cref{fig:numerical-main} plots the relative objective gap $(f(x_k)-f_{\rm ref})/(f(x_0)-f_{\rm ref})$ for convex problems and the relative gradient norm $\norm{\nabla f(x_k)}/\norm{\nabla f(x_0)}$ for weakly convex problems, against runtime. Reference values, timing, and the computational environment are detailed in \cref{app:numerical-environment}. Runs stop when the relative gradient norm is at most $10^{-6}$ for five consecutive iterations. For the convex comparisons in \cref{fig:numerical-main}, we use nonzero initial points to examine convergence from regions of pronounced nonquadratic curvature growth. Preprocessing and initial points are given in \cref{app:dataset-preprocessing}. Full plots, including gradient evaluations, are in \cref{app:main-full-comparisons}.

\paragraph{Convex problems.}
For data rows $b_i^\top$ of $B\in\RR^{n\times d}$, responses $y\in\RR^n$, and $p>2$, we use the regularized $\ell_p$ loss: $\min_{x\in\RR^d}\frac{1}{np}\sum_{i=1}^n\abs{b_i^\top x-y_i}^p +\frac{1}{2n}\norm{x}^2$. As in DPGD for $p$-norm regression~\citep{maddison2021}, the quadratic term ensures a positive definite Hessian. We use $\tau(r)=r^p/p$ (\cref{subsubsec:principal-targets}).
We set $p=6$ on \texttt{covtype}~\citep{covertype,LIBSVM} ($n=581012$, $d=53$). Our method reduces the objective gap faster than the three baselines (\cref{fig:numerical-main}(a)).

For binary labels $y_i\in\{-1,1\}$ and feature rows $a_i^\top$, we also consider powered hinge loss~\citep{Chen2004SoftMargin}: $\min_{x\in\RR^d}\frac{1}{np}\sum_{i=1}^n[1-y_i a_i^\top x]_+^p+\frac{1}{2n}\norm{x}^2$, with $p=6$ and the target $\tau(r)=r^p/p$.
On \texttt{a9a}~\citep{LIBSVM,adult} ($n=32561$, $d=123$), our method reaches a relative objective gap of $10^{-6}$ sooner than the three baselines (\cref{fig:numerical-main}(b)).
We use high-dimensional TF--IDF data for \texttt{LEDGAR} ($n=70000$, $d=19996$) and \texttt{SCOTUS} ($n=6400$, $d=126405$), distributed by LIBSVM~\citep{LIBSVM}, with the most frequent class versus the rest. Our method gives a rapid initial reduction of the objective gap and reaches the gradient stopping criterion sooner than the three baselines on both datasets (\cref{fig:numerical-main}(c),(d)).

\paragraph{Weakly convex problems.}
For data rows $a_i^\top$ of $A\in\RR^{n\times d}$ and labels $y\in\setE{-1,1}^n$, we combine exponential loss~\citep{Friedman2000Boosting} and a Cauchy penalty~\citep{Karakus2020Cauchy}: $\min_{x\in\RR^d} \frac{1}{n}\sum_{i=1}^n\exp(-y_i a_i^\top x) +\omega\sum_{j\in\mathcal J}\log(1+x_j^2/\sigma^2)$. We exclude the intercept from $\mathcal J$, set $\omega=0.1$, $\sigma=1$, and use $\eta=\omega/(4\sigma^2)$ and $\tau(r)=\exp(r)$. The global weak convexity bound is derived in \cref{app:numerical-weak-exp-cauchy-details}.
On \texttt{Skin Non-Skin}~\citep{skin_segmentation} ($n=245057$, $d=4$) and \texttt{Rice Cammeo/Osmancik}~\citep{Cinar_Koklu_2019} ($n=3810$, $d=8$), our method reduces the gradient norm faster than its Euclidean variant and GD Armijo (\cref{fig:numerical-main}(e),(f)). Across the three methods, terminal objective values agree within $5\times10^{-12}$ on both datasets.

\paragraph{Effects of adaptive stepsizes and preconditioner design.}
The normal constructions provide effective alternatives to existing preconditioners, with competitive runtime even for the numerically evaluated $\phi_{\mathrm N}^{a,2}$ (\cref{app:numerical-observations}). On \texttt{covtype}, the powered hinge datasets, and the weakly convex problems, Adaptive DPGD reaches the stopping criterion with fewer gradient evaluations and less runtime than its linesearch counterparts in nearly all comparisons, illustrating the benefit of avoiding repeated trial evaluations. For example, on \texttt{covtype} with $\phi_{\mathrm N}^{a,1}$, Adaptive DPGD uses 94 gradient evaluations and 2.25 seconds, versus 194 evaluations and 3.10 seconds for DPGD linesearch.

Varying only $a$ in $\phi_{\mathrm N}^{a,1}$, the model-derived exponents $a=-0.8$ on \texttt{svmguide1} and $a=-1$ on \texttt{Skin Non-Skin} give faster convergence than the alternative exponents in \cref{fig:exponent-ablation-n1}. This directly supports choosing $a$ from the curvature model rather than treating it as a tunable hyperparameter.

\section{Conclusion}

We proposed Adaptive DPGD, combining curvature-based nonlinear preconditioner design with linesearch-free stepsizes. Without specifying a global dual relative smoothness constant, we prove linear convergence of the objective gap and gradient norm under local strong convexity in convex problems, and stationarity with the same explicit update in weakly convex problems. Real-data experiments demonstrate the benefits of both preconditioner design and adaptive stepsizes.

The framework suggests several natural extensions. For weakly convex problems, it would be interesting to obtain quantitative stationarity rates and to adapt to an unknown weak-convexity parameter while retaining the explicit DPGD update. On the design side, the normal/tangential choice and interpolation parameter could be selected automatically from objective or iterate information. The same ideas also motivate coordinatewise or blockwise curvature estimates for separable preconditioners related to AdaGrad and SNGD/SHGD~\citep{adagrad11,Zhang2020Why,oikonomidis2025nonlinearly,oikonomidis2026nonlinearly} as discussed in \cref{app:preconditioner-design-supplement}.

\begin{figure}[!tp]
    \centering
    \includegraphics[width=\linewidth]{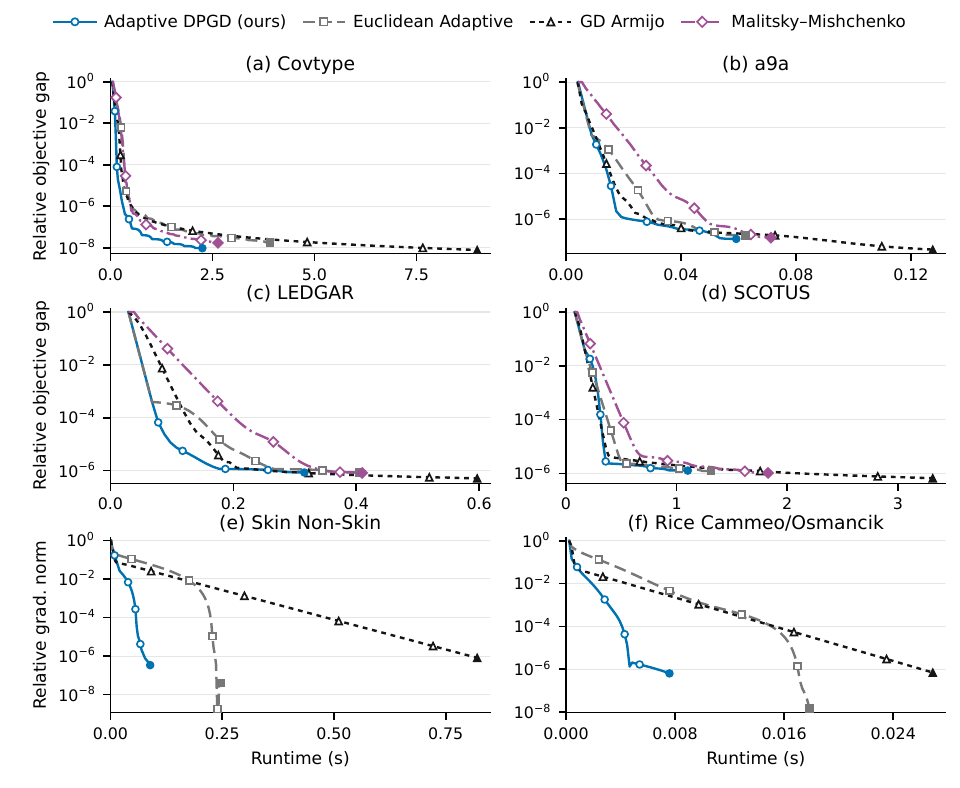}
    \caption{Runtime comparisons. Adaptive DPGD (ours) uses $\phi_{\mathrm N}^{a,1}$. (a) $\ell_6$ loss on \texttt{covtype}; (b--d) powered hinge loss ($p=6$) on \texttt{a9a}, \texttt{LEDGAR}, and \texttt{SCOTUS}; (e--f) exponential loss with Cauchy penalty on \texttt{Skin Non-Skin} and \texttt{Rice Cammeo/Osmancik}. Convex panels show relative objective gaps; weakly convex panels show relative gradient norms.}
    \label{fig:numerical-main}
\end{figure}

\begin{figure}[!tp]
    \centering
    \includegraphics[width=0.96\linewidth]{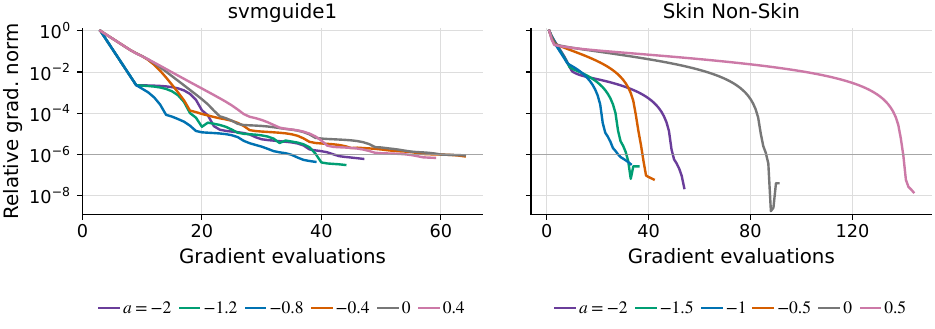}
    \caption{Adaptive DPGD with $\phi_{\mathrm N}^{a,1}$, varying only $a$. Model exponents: $-0.8$ (powered hinge loss, $p=6$, \texttt{svmguide1}) and $-1$ (\texttt{Skin Non-Skin}); $a=0$ is Euclidean Adaptive DPGD.}
    \label{fig:exponent-ablation-n1}
\end{figure}

\subsubsection*{Acknowledgments}
This project has been funded by the Japan Society for the Promotion of Science (JSPS); JSPS KAKENHI Grant Number JP23K28041 and JP25K21156.

\newpage

\bibliographystyle{plainnat}
\bibliography{reference}

@article{maddison2021,
  author        = {Maddison, Chris J. and Paulin, Daniel and Teh, Yee Whye and Doucet, Arnaud},
  title         = {Dual Space Preconditioning for Gradient Descent},
  journal       = {SIAM Journal on Optimization},
  volume        = {31},
  number        = {1},
  pages         = {991--1016},
  year          = {2021},
}

@article{adabregprox2025,
  title = {Linesearch-Free Adaptive {Bregman} Proximal Gradient for Convex Minimization Under Local Relative Smoothness},
  volume = {210},
  number = {3},
  journal = {Journal of Optimization Theory and Applications},
  publisher = {Springer Science and Business Media LLC},
  author = {Ou,  Hongjia and Latafat,  Puya and Themelis,  Andreas},
  year = {2026},
  eid = {68},
}

@inproceedings{MM20,
  title         = {Adaptive Gradient Descent without Descent},
  author        = {Malitsky, Yura and Mishchenko, Konstantin},
  booktitle     = {Proceedings of the 37th International Conference on Machine Learning},
  pages         = {6702--6712},
  year          = {2020},
  volume        = {119},
  series        = {Proceedings of Machine Learning Research},
  publisher     = {PMLR},
}

@book{beck1stoder,
  author        = {Beck, Amir},
  title         = {First-Order Methods in Optimization},
  publisher     = {Society for Industrial and Applied Mathematics},
  year          = {2017},
  address       = {Philadelphia, PA},
}

@article{armijo66,
  author        = {Larry Armijo},
  title         = {Minimization of functions having {Lipschitz} continuous first partial derivatives},
  volume        = {16},
  journal       = {Pacific Journal of Mathematics},
  number        = {1},
  publisher     = {Pacific Journal of Mathematics, A Non-profit Corporation},
  pages         = {1--3},
  year          = {1966},
}

@article{adagrad11,
  author        = {John Duchi and Elad Hazan and Yoram Singer},
  title         = {Adaptive Subgradient Methods for Online Learning and Stochastic Optimization},
  journal       = {Journal of Machine Learning Research},
  year          = {2011},
  volume        = {12},
  number        = {61},
  pages         = {2121--2159},
}

@inproceedings{oikonomidis2026nonlinearly,
  author        = {Oikonomidis, Konstantinos and Quan, Jan and Patrinos, Panagiotis},
  booktitle     = {Advances in Neural Information Processing Systems},
  pages         = {38957--38988},
  publisher     = {Curran Associates, Inc.},
  title         = {Nonlinearly Preconditioned Gradient Methods: Momentum and Stochastic Analysis},
  volume        = {38},
  year          = {2025},
}

@inproceedings{oikonomidis2025nonlinearly,
  title         = {Nonlinearly Preconditioned Gradient Methods under Generalized Smoothness},
  author        = {Oikonomidis, Konstantinos and Quan, Jan and Laude, Emanuel and Patrinos, Panagiotis},
  booktitle     = {Proceedings of the 42nd International Conference on Machine Learning},
  pages         = {47132--47154},
  year          = {2025},
  volume        = {267},
  series        = {Proceedings of Machine Learning Research},
  publisher     = {PMLR},
}

@inproceedings{Zhang2020Why,
  title         = {Why Gradient Clipping Accelerates Training: A Theoretical Justification for Adaptivity},
  author        = {Jingzhao Zhang and Tianxing He and Suvrit Sra and Ali Jadbabaie},
  booktitle     = {International Conference on Learning Representations},
  year          = {2020},
}

@inproceedings{kingma2017adammethodstochasticoptimization,
  author       = {Diederik P. Kingma and
                  Jimmy Ba},
  title        = {{Adam}: {A} Method for Stochastic Optimization},
  booktitle    = {International Conference on Learning Representations},
  year         = {2015},
  bibsource    = {dblp computer science bibliography, https://dblp.org}
}

@book{nesterov2004,
  author        = {Nesterov, Yurii},
  title         = {Introductory Lectures on Convex Optimization: A Basic Course},
  series        = {Applied Optimization},
  volume        = {87},
  publisher     = {Kluwer Academic Publishers},
  address       = {Boston, MA},
  year          = {2004},
}

@book{Rockafellar1997-de,
  title         = {Variational Analysis},
  author        = {Rockafellar, Ralph Tyrrell and Wets, Roger J.-B.},
  publisher     = {Springer},
  series        = {Grundlehren der mathematischen Wissenschaften},
  edition       = {1st},
  year          = {1997},
  address       = {Berlin, Germany},
  language      = {en},
}

@inproceedings{PaquetteWeak18,
  title         = {Catalyst for Gradient-based Nonconvex Optimization},
  author        = {Paquette, Courtney and Lin, Hongzhou and Drusvyatskiy, Dmitriy and Mairal, Julien and Harchaoui, Zaid},
  booktitle     = {Proceedings of the Twenty-First International Conference on Artificial Intelligence and Statistics},
  pages         = {613--622},
  year          = {2018},
  volume        = {84},
  series        = {Proceedings of Machine Learning Research},
  publisher     = {PMLR},
}

@article{yagishita2025simplelinesearchfreefirstordermethods,
  title         = {Simple linesearch-free first-order methods for nonconvex optimization},
  author        = {Shotaro Yagishita and Masaru Ito},
  year          = {2025},
  eprint        = {2509.14670},
  archiveprefix = {arXiv},
  primaryclass  = {math.OC},
  journal       = {arXiv preprint arXiv:2509.14670},
}

@article{PatelDiminishing24,
  author        = {Patel, Vivak and Berahas, Albert S.},
  title         = {Gradient Descent in the Absence of Global {Lipschitz} Continuity of the Gradients},
  journal       = {SIAM Journal on Mathematics of Data Science},
  volume        = {6},
  number        = {3},
  pages         = {602--626},
  year          = {2024},
}

@article{ye2025simpleadaptiveproximalgradient,
  title         = {A Simple Adaptive Proximal Gradient Method for Nonconvex Optimization},
  author        = {Zilong Ye and Shiqian Ma and Junfeng Yang and Danqing Zhou},
  year          = {2025},
  eprint        = {2510.06079},
  archiveprefix = {arXiv},
  primaryclass  = {math.OC},
  journal       = {arXiv preprint arXiv:2510.06079},
}

@article{kin8nmOpenML,
  title         = {{OpenML}: Networked Science in Machine Learning},
  author        = {Joaquin Vanschoren and van Rijn, Jan N. and Bernd Bischl and Luis Torgo},
  journal       = {ACM SIGKDD Explorations Newsletter},
  volume        = {15},
  year          = {2013},
  pages         = {49--60},
}

@article{LIBSVM,
  author        = {Chang, Chih-Chung and Lin, Chih-Jen},
  title         = {{LIBSVM}: A library for support vector machines},
  year          = {2011},
  issue_date    = {April 2011},
  publisher     = {Association for Computing Machinery},
  address       = {New York, NY, USA},
  volume        = {2},
  number        = {3},
  journal       = {ACM Transactions on Intelligent Systems and Technology},
  numpages      = {27},
  eid = {27},
}

@misc{adult,
  author        = {Becker, Barry and Kohavi, Ronny},
  title         = {{Adult}},
  year          = {1996},
  howpublished  = {UCI Machine Learning Repository},
}

@misc{covertype,
  author        = {Blackard, Jock},
  title         = {{Covertype}},
  year          = {1998},
  howpublished  = {UCI Machine Learning Repository},
}

@misc{skin_segmentation,
  author        = {Bhatt, Rajen and Dhall, Abhinav},
  title         = {{Skin Segmentation}},
  year          = {2009},
  howpublished  = {UCI Machine Learning Repository},
}

@misc{HoaiThai2024,
  author        = {Pham Thi Hoai and Nguyen Pham Duy Thai},
  title         = {Composite Optimization Problems via Novel Proximal Gradient Algorithms and Applications},
  year          = {2024},
  howpublished  = {Optimization Online},
}

@article{xuChenAAMD2026,
  title         = {Adaptive Accelerated Mirror Descent in Primal and Dual Spaces},
  author        = {Zeyi Xu and Long Chen},
  year          = {2026},
  eprint        = {2606.02787},
  archiveprefix = {arXiv},
  primaryclass  = {math.OC},
  journal       = {arXiv preprint arXiv:2606.02787},
}

@article{Friedman2000Boosting,
  title         = {Additive logistic regression: A statistical view of boosting (With discussion and a rejoinder by the authors)},
  volume        = {28},
  number        = {2},
  journal       = {The Annals of Statistics},
  publisher     = {Institute of Mathematical Statistics},
  author        = {Friedman, Jerome and Hastie, Trevor and Tibshirani, Robert},
  year          = {2000},
  pages         = {337--407},
}

@article{Chen2004SoftMargin,
  author        = {Di-Rong Chen and Qiang Wu and Yiming Ying and Ding-Xuan Zhou},
  title         = {Support Vector Machine Soft Margin Classifiers: Error Analysis},
  journal       = {Journal of Machine Learning Research},
  volume        = {5},
  pages         = {1143--1175},
  year          = {2004},
}

@article{Karakus2020Cauchy,
  author        = {Karaku\c{s}, Oktay and Mayo, Perla and Achim, Alin},
  journal       = {IEEE Transactions on Signal Processing},
  title         = {Convergence Guarantees for Non-Convex Optimisation With {Cauchy}-Based Penalties},
  year          = {2020},
  volume        = {68},
  pages         = {6159--6170},
}

@article{Takahashi2026-tp,
  title         = {Adaptive conditional gradient sliding: Projection-free and line-search-free acceleration},
  author        = {Takahashi, Shota},
  journal       = {arXiv preprint arXiv:2601.20443},
  year          = {2026},
  archiveprefix = {arXiv},
  primaryclass  = {math.OC},
  eprint        = {2601.20443},
}

@article{Li2025-cb,
  author = {Li, Tianjiao and Lan, Guanghui},
  title = {A simple uniformly optimal method without line search for convex optimization},
  journal = {Mathematical Programming},
  volume = {219},
  number = {1},
  year = {2026},
  pages = {219--256},
}

@article{Suh2025-eq,
  title         = {An adaptive and parameter-free {Nesterov}'s accelerated gradient method for convex optimization},
  author        = {Suh, Jaewook J. and Ma, Shiqian},
  journal       = {arXiv preprint arXiv:2505.11670},
  year          = {2025},
  archiveprefix = {arXiv},
  primaryclass  = {math.OC},
  eprint        = {2505.11670},
}

@inproceedings{Park2026-cd,
  title         = {Adaptive gradient descent on {Riemannian} manifolds and its applications to {Gaussian} variational inference},
  author        = {Park, Jiyoung and Suh, Jaewook J. and Wang, Bofan and Bhattacharya, Anirban and Ma, Shiqian},
  booktitle     = {International Conference on Learning Representations},
  year          = {2026},
}

@inproceedings{Takahashi2026-ch,
  title         = {Fast {Frank--Wolfe} Algorithms with Adaptive {Bregman} Step-Size for Weakly Convex Functions},
  author        = {Takahashi, Shota and Pokutta, Sebastian and Takeda, Akiko},
  booktitle     = {International Conference on Learning Representations},
  year          = {2026},
}

@article{Takahashi2026-rv,
  title = {Majorization-Minimization {Bregman} Proximal Gradient Algorithms for {NMF} with the {Kullback–Leibler} Divergence},
  volume = {208},
  number = {1},
  journal = {Journal of Optimization Theory and Applications},
  publisher = {Springer Science and Business Media LLC},
  author = {Takahashi,  Shota and Tanaka,  Mirai and Ikeda,  Shiro},
  year = {2025},
  eid = {14},
}

@article{Takahashi2024-ej,
  author = {Takahashi, Shota and Takeda, Akiko},
  title = {Approximate {Bregman} proximal gradient algorithm for relatively smooth nonconvex optimization},
  journal = {Computational Optimization and Applications},
  volume = {90},
  number = {1},
  year = {2025},
  pages = {227--256},
}

@article{Takahashi2022-ml,
  title         = {New {Bregman} proximal type algorithms for solving {DC} optimization problems},
  author        = {Takahashi, Shota and Fukuda, Mituhiro and Tanaka, Mirai},
  journal       = {Computational Optimization and Applications},
  publisher     = {Springer Science and Business Media LLC},
  volume        = {83},
  number        = {3},
  pages         = {893--931},
  year          = {2022},
}

@article{Bolte2018-zt,
  title         = {First order methods beyond convexity and {Lipschitz} gradient continuity with applications to quadratic inverse problems},
  author        = {Bolte, J\'{e}r\^{o}me and Sabach, Shoham and Teboulle, Marc and Vaisbourd, Yakov},
  journal       = {SIAM Journal on Optimization},
  publisher     = {Society for Industrial \& Applied Mathematics (SIAM)},
  volume        = {28},
  number        = {3},
  pages         = {2131--2151},
  year          = {2018},
}

@article{Bauschke2017-hg,
  title         = {A descent lemma beyond {Lipschitz} gradient continuity: First-order methods revisited and applications},
  author        = {Bauschke, Heinz H. and Bolte, J\'{e}r\^{o}me and Teboulle, Marc},
  journal       = {Mathematics of Operations Research},
  publisher     = {Institute for Operations Research and the Management Sciences (INFORMS)},
  volume        = {42},
  number        = {2},
  pages         = {330--348},
  year          = {2017},
}

@book{Nemirovski1983-yw,
  title         = {{Problem Complexity and Method Efficiency in Optimization}},
  author        = {Nemirovski, Arkadi S. and Yudin, David B.},
  publisher     = {Wiley},
  address       = {Chichester},
  series        = {Wiley Series in Discrete Mathematics},
  year          = {1983},
}

@inproceedings{Mukkamala2019-mk,
  title         = {Beyond alternating updates for matrix factorization with inertial {Bregman} proximal gradient algorithms},
  author        = {Mukkamala, Mahesh Chandra and Ochs, Peter},
  booktitle     = {Advances in Neural Information Processing Systems},
  publisher     = {Curran Associates, Inc.},
  volume        = {32},
  pages         = {4268--4278},
  year          = {2019},
}

@article{Fujiki2025-do,
  title         = {Approximate {Bregman} proximal gradient algorithm with variable metric {Armijo--Wolfe} line search},
  author        = {Fujiki, Kiwamu and Takahashi, Shota and Takeda, Akiko},
  journal       = {arXiv preprint arXiv:2510.06615},
  year          = {2025},
  archiveprefix = {arXiv},
  primaryclass  = {math.OC},
  eprint        = {2510.06615},
}

@inproceedings{chalkidis-etal-2022-lexglue,
title = "{L}ex{GLUE}: A Benchmark Dataset for Legal Language Understanding in {E}nglish",
author = "Chalkidis, Ilias  and
Jana, Abhik  and
Hartung, Dirk  and
Bommarito, Michael  and
Androutsopoulos, Ion  and
Katz, Daniel  and
Aletras, Nikolaos",
booktitle = "Proceedings of the 60th Annual Meeting of the Association for Computational Linguistics (Volume 1: Long Papers)",
year = "2022",
publisher = "Association for Computational Linguistics",
pages = "4310--4330",
}

@inproceedings{tuggener-etal-2020-ledgar,
title = "{LEDGAR}: A Large-Scale Multi-label Corpus for Text Classification of Legal Provisions in Contracts",
author = {Tuggener, Don  and
von D{\"a}niken, Pius  and
Peetz, Thomas  and
Cieliebak, Mark},
booktitle = "Proceedings of the Twelfth Language Resources and Evaluation Conference",
year = "2020",
publisher = "European Language Resources Association",
pages = "1235--1241",
language = "eng",
}

@article{Cinar_Koklu_2019,
  title = {Classification of Rice Varieties Using Artificial Intelligence Methods},
  volume = {7},
  number = {3},
  journal = {International Journal of Intelligent Systems and Applications in Engineering},
  author = {Cinar, Ilkay and Koklu, Murat},
  year = {2019},
  pages = {188--194},
}

\appendix
\section{Technical Preliminaries}\label[appendix]{app:technical-preliminaries}
We collect local regularity definitions, auxiliary results on compact sets, and the proof of the two-step identities used in both convergence analyses.

\subsection{Definitions and preliminary results}

\begin{definition}[Level boundedness]\label{def:level-bounded}
   A function $h$ is level bounded if, for every $c\in\RR$,
    \begin{equation}
        \setE{x\in\RR^d\mid h(x)\le c}
    \end{equation}
   is bounded; see, e.g., \citet[Definition~1.8]{Rockafellar1997-de}.
\end{definition}
\begin{definition}[Local Lipschitz continuity]\label{def:local-regularity}
   A map $S$ on an open set $\Omega\subset\RR^d$ is locally Lipschitz continuous if, for each $x\in\Omega$, there exist a neighborhood $U_x\subset\Omega$ and a constant $L_x>0$ such that, for every $y,z\in U_x$,
    \begin{equation}
        \norm{S(y)-S(z)}\le L_x\norm{y-z}.
    \end{equation}
The pointwise property is called strict continuity in \citet[Definition~9.1(b)]{Rockafellar1997-de}.
\end{definition}

\begin{definition}[Strong convexity on a set and local strong convexity]\label{def:local-strong-convexity}
   A differentiable convex function $h$ is $\mu_{\mathcal K}$-strongly convex on a convex set $\mathcal K\subset\Int\Dom h$ if $\mu_{\mathcal K}>0$ and, for all $x,y\in\mathcal K$,
    \begin{equation}\label{eq:strong-convex-on-set}
       h(y)
        \ge
       h(x)+\inpr{\nabla h(x)}{y-x}
        +\frac{\mu_{\mathcal K}}{2}\norm{y-x}^2.
    \end{equation}
The first-order condition above is the standard characterization of strong convexity restricted to $\mathcal K$; see, e.g., \citet[Theorem~5.24]{beck1stoder}.
We call $h$ locally strongly convex if it is strongly convex on every nonempty compact convex set $\mathcal K\subset\Int\Dom h$.
\end{definition}

\begin{lemma}\label{lem:local-strong-convexity}
   Let $h\in C^2(\Omega)$ be convex, with $\nabla^2h(x)\succ0$ for all $x\in\Omega$.
   Then $h$ is locally strongly convex on $\Omega$. In particular, for every nonempty compact convex set $\mathcal K\subset\Omega$, there exist $m_{\mathcal K},M_{\mathcal K}>0$ such that
    \begin{equation}\label{eq:local-hessian-bounds}
       m_{\mathcal K}I\preceq\nabla^2h(x)\preceq M_{\mathcal K}I,
        \qquad x\in\mathcal K.
    \end{equation}
Furthermore, for every $x,y\in\mathcal K$,
    \begin{equation}\label{eq:local-bregman-quadratic-bounds}
        \frac{m_{\mathcal K}}{2}\norm{x-y}^2
        \le D_h(x,y)
        \le \frac{M_{\mathcal K}}{2}\norm{x-y}^2
    \end{equation}
   and
    \begin{equation}\label{eq:local-symmetric-gradient-bounds}
       m_{\mathcal K}\norm{x-y}^2
        \le \Delta_h(x,y)
        \le M_{\mathcal K}\norm{x-y}^2,
        \qquad
       m_{\mathcal K}\norm{x-y}
        \le \norm{\nabla h(x)-\nabla h(y)}
        \le M_{\mathcal K}\norm{x-y}.
    \end{equation}
In particular, if $x\neq y$, then
    \begin{equation}\label{eq:local-bregman-ratio-bound}
        \frac{m_{\mathcal K}}{M_{\mathcal K}}
        \le
        \frac{D_h(x,y)}{D_h(y,x)}
        \le
        \frac{M_{\mathcal K}}{m_{\mathcal K}}.
    \end{equation}
\end{lemma}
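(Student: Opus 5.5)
The plan is to work on a fixed nonempty compact convex set $\mathcal K\subset\Omega$ and use continuity of the Hessian there.

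\textbf{Step 1: Hessian bounds.} The map $x\mapsto\lambda_{\min}(\nabla^2h(x))$ is continuous on $\Omega$, since eigenvalues depend continuously on symmetric matrices and $h\in C^2(\Omega)$. It is strictly positive by hypothesis. On the compact set $\mathcal K$ it therefore attains a positive minimum, which we call $m_{\mathcal K}$. By the same argument, $\lambda_{\max}(\nabla^2 h)$ attains a finite maximum $M_{\mathcal K}\ge m_{\mathcal K}>0$. This gives \eqref{eq:local-hessian-bounds}.

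\textbf{Step 2: Bregman bounds.} Let $x,y\in\mathcal K$. Convexity of $\mathcal K$ keeps the segment $y+t(x-y)$, $t\in[0,1]$, inside $\mathcal K\subset\Omega$. The integral form of Taylor's theorem gives
\[
D_h(x,y)=\int_0^1(1-t)\,\langle\nabla^2h(y+t(x-y))(x-y),x-y\rangle\,dt .
\]
Inserting the bounds from Step 1 and using $\int_0^1(1-t)\,dt=\tfrac12$ yields \eqref{eq:local-bregman-quadratic-bounds}. In particular the lower bound is the first-order strong convexity inequality \eqref{eq:strong-convex-on-set} with $\mu_{\mathcal K}=m_{\mathcal K}$. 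Since $\mathcal K$ was an arbitrary nonempty compact convex set, $h$ is locally strongly convex.

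\textbf{Step 3: symmetrized and gradient bounds.} Adding the two inequalities of Step 2 with $x,y$ swapped gives the bounds on $\Delta_h$. For the gradient, write
\[
\nabla h(x)-\nabla h(y)=A(x-y),\qquad A:=\int_0^1\nabla^2h(y+t(x-y))\,dt .
\]
The averaged matrix $A$ is symmetric and satisfies $m_{\mathcal K}I\preceq A\preceq M_{\mathcal K}I$. For such a matrix, $\|Av\|$ lies between $m_{\mathcal K}\|v\|$ and $M_{\mathcal K}\|v\|$, which gives \eqref{eq:local-symmetric-gradient-bounds}.

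\textbf{Step 4: ratio bound.} If $x\ne y$, both $D_h(x,y)$ and $D_h(y,x)$ are positive by Step 2. Dividing the upper bound of one by the lower bound of the other gives \eqref{eq:local-bregman-ratio-bound}.

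The only subtle point is the continuity-of-eigenvalues step combined with compactness. Convexity of $\mathcal K$ matters only to keep segments inside $\Omega$, so the integral representations are valid. There is no real obstacle beyond this.
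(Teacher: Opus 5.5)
Your proposal is correct and follows essentially the same route as the paper: eigenvalue bounds on the Hessian from continuity and compactness, Taylor's integral formula along segments inside $\mathcal K$, and then the symmetrized, gradient, and ratio bounds. The differences are cosmetic and both variants are valid. You get the $\Delta_h$ bounds by adding the two Bregman bounds, where the paper integrates $\langle\nabla^2h(y+t(x-y))(x-y),x-y\rangle$ directly. You get the gradient lower bound from the averaged symmetric matrix $A$ with $m_{\mathcal K}I\preceq A\preceq M_{\mathcal K}I$, where the paper applies Cauchy--Schwarz to $\Delta_h(x,y)\ge m_{\mathcal K}\|x-y\|^2$.
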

\begin{proof}[Proof of \cref{lem:local-strong-convexity}]
Fix a nonempty compact convex set $\mathcal K\subset\Omega$.
Continuity of $\nabla^2h$ and compactness of $\mathcal K$ imply that $x\mapsto\lambda_{\min}(\nabla^2h(x))$ has a positive minimum on $\mathcal K$, and $x\mapsto\lambda_{\max}(\nabla^2h(x))$ has a finite maximum.
Denoting these by $m_{\mathcal K},M_{\mathcal K}>0$ gives \eqref{eq:local-hessian-bounds}.

Fix $x,y\in\mathcal K$ and $w:=x-y$. Convexity of $\mathcal K$ gives $y+tw\in\mathcal K$ for $0\le t\le1$, and Taylor's integral formula yields
\begin{equation}
   D_h(x,y)
    =
    \int_0^1(1-t)
    \inpr{\nabla^2h(y+tw)w}{w}\,dt.
\end{equation}
Integrating \eqref{eq:local-hessian-bounds} gives \eqref{eq:local-bregman-quadratic-bounds}. Similarly,
\begin{equation}
    \Delta_h(x,y)
    =
    \inpr{\nabla h(x)-\nabla h(y)}{w}
    =
    \int_0^1
    \inpr{\nabla^2h(y+tw)w}{w}\,dt
\end{equation}
gives the bounds on $\Delta_h$ in \eqref{eq:local-symmetric-gradient-bounds}. Moreover, the argument of \citet[Theorem~5.12]{beck1stoder}, applied along segments in $\mathcal K$, uses
\begin{equation}
    \nabla h(x)-\nabla h(y)
    =
    \int_0^1\nabla^2h(y+tw)w\,dt
\end{equation}
together with \eqref{eq:local-hessian-bounds} to obtain $\norm{\nabla h(x)-\nabla h(y)}\le M_{\mathcal K}\norm{w}$. Conversely, Cauchy--Schwarz and $\Delta_h(x,y)\ge m_{\mathcal K}\norm{w}^2$ give $\norm{\nabla h(x)-\nabla h(y)}\ge m_{\mathcal K}\norm{w}$.
Equation~\eqref{eq:strong-convex-on-set} is equivalent to the lower bound in \eqref{eq:local-bregman-quadratic-bounds}. Finally, for $x\neq y$, apply \eqref{eq:local-bregman-quadratic-bounds} to $(x,y)$ and $(y,x)$ and take ratios to obtain \eqref{eq:local-bregman-ratio-bound}.
\end{proof}
The next lemma gives local Lipschitz continuity of the composition used in the convex convergence analysis.
\begin{lemma}\label{lem:locallip}
   Let $f:\RR^d\to\RR$ be $C^2$, with $\nabla^2f(x)\succ0$ for every $x\in\RR^d$.
   Suppose that $\phi\in\mathcal P$ is locally dual relatively smooth with respect to $f$.
   Then $\nabla\phi\circ\nabla f$ is locally Lipschitz continuous on $\RR^d$.
\end{lemma}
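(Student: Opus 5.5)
The plan is to prove local Lipschitz continuity in two steps. First, transport the local dual relative smoothness inequality \eqref{eq:local-dual-relative-smoothness} into a quadratic upper bound on $D_\phi$ near each point of $\nabla f(\RR^d)$. Second, use a cocoercivity-type argument for convex functions to turn that quadratic upper bound into a Lipschitz bound on $\nabla\phi$. Composing with the local Lipschitz continuity of $\nabla f$ then gives the claim.

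Fix $\bar x\in\RR^d$ and a closed ball $\mathcal K$ centred at $\bar x$. By \cref{lem:local-strong-convexity} applied to $h=f$, there are $0<m_{\mathcal K}\le M_{\mathcal K}$ such that, for $y,z\in\mathcal K$,
\begin{equation*}
D_f(z,y)\le\tfrac{M_{\mathcal K}}2\norm{y-z}^2,
\qquad
m_{\mathcal K}\norm{y-z}\le\norm{\nabla f(y)-\nabla f(z)}\le M_{\mathcal K}\norm{y-z}.
\end{equation*}
Combining these with \eqref{eq:local-dual-relative-smoothness} and symmetrizing (swap $y$ and $z$, then add) gives
\begin{equation*}
\Delta_\phi(\nabla f(y),\nabla f(z))\le L_{\mathcal K}M_{\mathcal K}\norm{y-z}^2 .
\end{equation*}
Set $u=\nabla f(y)$ and $v=\nabla f(z)$. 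Then $\norm{y-z}\le\norm{u-v}/m_{\mathcal K}$, so $D_\phi(u,v)\le C\norm{u-v}^2$ with $C:=L_{\mathcal K}M_{\mathcal K}/(2m_{\mathcal K}^2)$, for all $u,v\in\nabla f(\mathcal K)$.

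To use this as a genuine smoothness bound I need a convex neighbourhood of $\bar u:=\nabla f(\bar x)$ contained in $\nabla f(\mathcal K)$. Since $\nabla^2 f(\bar x)\succ0$, the inverse function theorem shows that $\nabla f$ maps the interior of $\mathcal K$ onto an open set containing $\bar u$. So there is a ball $B=B(\bar u,r)\subset\nabla f(\mathcal K)\cap\Omega_\phi$, and on $B$ we have $D_\phi(w,u)\le C\norm{w-u}^2$.

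The main step is the cocoercivity argument, which is delicate because the quadratic bound holds only on $B$. Fix $v\in B$ and set $\psi(w):=\phi(w)-\inpr{\nabla\phi(v)}{w}$. This $\psi$ is convex with $\nabla\psi(v)=0$, so $v$ minimizes $\psi$ globally. For $u\in B$, the trial point $w=u-\nabla\psi(u)/(2C)$ must also lie in $B$. To guarantee this, shrink to $B'=B(\bar u,\rho)$ with $\rho\le r/2$ small enough, using continuity of $\nabla\phi$ on $\Omega_\phi$, that $\norm{\nabla\phi(u)-\nabla\phi(v)}\le C r$ for $u,v\in B'$; then $\norm{w-\bar u}\le\rho+r/2\le r$. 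With $w\in B$, the quadratic upper bound gives
\begin{equation*}
\psi(v)\le\psi(w)\le\psi(u)-\tfrac1{4C}\norm{\nabla\psi(u)}^2,
\end{equation*}
that is, $D_\phi(u,v)\ge\norm{\nabla\phi(u)-\nabla\phi(v)}^2/(4C)$. Symmetrizing yields
\begin{equation*}
\norm{\nabla\phi(u)-\nabla\phi(v)}^2\le 2C\,\Delta_\phi(u,v),\qquad u,v\in B'.
\end{equation*}

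Finally, take $U:=(\nabla f)^{-1}(B')\cap\mathcal K$, which is a neighbourhood of $\bar x$ by continuity of $\nabla f$. For $y,z\in U$ the bound just proved, together with the earlier estimate on $\Delta_\phi$, gives
\begin{equation*}
\norm{\nabla\phi(\nabla f(y))-\nabla\phi(\nabla f(z))}^2\le 2C L_{\mathcal K}M_{\mathcal K}\norm{y-z}^2 .
\end{equation*}
This is local Lipschitz continuity of $\nabla\phi\circ\nabla f$ at $\bar x$ in the sense of \cref{def:local-regularity}.
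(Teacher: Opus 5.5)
Your proof is correct and follows essentially the same route as the paper. Both arguments transport local dual relative smoothness into a quadratic upper bound on $D_\phi$ over a ball around $\nabla f(\bar x)$ supplied by the inverse function theorem. Both then run a localized Nesterov cocoercivity argument, after shrinking the ball via continuity of $\nabla\phi$ so that the gradient-step trial point stays inside.

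The only cosmetic difference is the last step. You plug in $\Delta_\phi(\nabla f(y),\nabla f(z))\le L_{\mathcal K}M_{\mathcal K}\|y-z\|^2$ directly. The paper instead first uses Cauchy--Schwarz to show that $\nabla\phi$ is Lipschitz on the ball, and then composes with the Lipschitz map $\nabla f$.
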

\begin{proof}[Proof of \cref{lem:locallip}]
Fix $\bar{x}\in\RR^d$. 
Positive definiteness of $\he f(\bar{x})$, continuity of $\he f$, and the inverse function theorem \citep[Corollary~9.55]{Rockafellar1997-de} provide a bounded open convex neighborhood $\mathcal{V}$ of $\bar{x}$, an open convex neighborhood $\mathcal{W}$ of $\nf(\bar{x})$,
and constants $m,M>0$ such that
\begin{equation}
\overline{\mathcal{V}}\ \text{is compact},
\qquad
\mathcal{W}\subset\nf(\mathcal{V})\cap\Omega_\phi
\end{equation}
and
\begin{equation}\label{eq:local-bilip-gradient}
m\norm{x-y}
\le
\norm{\nf(x)-\nf(y)}
\le
M\norm{x-y},
\qquad
x,y\in\mathcal{V}.
\end{equation}
Moreover, $\he f(x)\preceq MI$ on $\mathcal{V}$ gives
\begin{equation}\label{eq:local-bregman-upper}
D_f(y,x)
\le
\frac{M}{2}\norm{y-x}^2,
\qquad
x,y\in\mathcal{V}.
\end{equation}

For $u,v\in\mathcal{W}$, set
\begin{equation}
    x:=\paren*{\nf|_{\mathcal{V}}}^{-1}(u),
    \qquad
    y:=\paren*{\nf|_{\mathcal{V}}}^{-1}(v).
\end{equation}
By \eqref{eq:local-dual-relative-smoothness}, \eqref{eq:local-bilip-gradient}, and \eqref{eq:local-bregman-upper},
\begin{align}
    &D_\phi(u,v) \le L_{\overline{\mathcal{V}}}D_f(y,x) \le \frac{L_{\overline{\mathcal{V}}}M}{2} \norm{y-x}^2 \le \frac{L_{\overline{\mathcal{V}}}M}{2m^2} \norm{u-v}^2. \label{eq:local-phi-smoothness}
\end{align}
Set $C:=L_{\overline{\mathcal V}}M/m^2>0$.
We localize the argument of \citet[Theorem~2.1.5]{nesterov2004}.
Continuity of $\nabla\phi$ allows us to choose an open ball $\mathcal U$ about $\nf(\bar{x})$, contained in $\mathcal W$, such that, for all $u,v\in\mathcal U$, the points $v-\frac{\nabla\phi(v)-\nabla\phi(u)}{C}$ lie in $\mathcal W$.
Convexity, expansion of the Bregman divergence, and \eqref{eq:local-phi-smoothness} give
\begin{equation}
    0\le D_\phi\!\left(v-\frac{\nabla\phi(v)-\nabla\phi(u)}{C},u\right)\le D_\phi(v,u)-\frac{1}{2C}\norm{\nabla\phi(v)-\nabla\phi(u)}^2.
\end{equation}
Interchanging $u,v$ and adding yields
\[
    \norm{\nabla\phi(v)-\nabla\phi(u)}^2\le C\Delta_\phi(u,v)\le C\norm{\nabla\phi(v)-\nabla\phi(u)}\norm{u-v}.
\]
Thus $\nabla\phi$ is $C$-Lipschitz continuous on $\mathcal U$.

By \eqref{eq:local-bilip-gradient}, $\nf$ is Lipschitz continuous on $\mathcal{V}$. Consequently, $\nabla\phi\circ\nf$ is Lipschitz continuous on $\mathcal V\cap(\nabla f)^{-1}(\mathcal U)$, a neighborhood of $\bar x$ by continuity of $\nabla f$.
Since $\bar{x}\in\RR^d$ was arbitrary, this composition is locally Lipschitz continuous on $\RR^d$.

\end{proof}

The two regularity conditions in \cref{assump:main}(iii) yield the following common bounds.
\begin{proposition}\label{prop:local-regularity}
    Let $\eta\ge0$ and $f\in C^2(\RR^d)$ with $\nabla^2g_\eta\succ0$ on $\RR^d$. Suppose that $\phi\in\mathcal P$ and \cref{assump:main}(iii) holds. Then the following hold.
    \begin{enumerate}
        \item $g_\eta$ is locally strongly convex.
        \item $\nabla\phi\circ\nabla f$ is locally Lipschitz on $\RR^d$.
        \item For every nonempty compact set $\mathcal K\subset\RR^d$, there exists $L_{\mathcal K}>0$ such that
        \begin{equation}\label{eq:local-comparison}
            D_\phi(\nabla f(x),\nabla f(y))
            \le L_{\mathcal K}D_{g_\eta}(y,x),
            \qquad x,y\in\mathcal K.
        \end{equation}
        Consequently,
        \begin{equation}\label{eq:local-comparison-sym}
            \Delta_\phi(\nabla f(x),\nabla f(y))
            \le L_{\mathcal K}\Delta_{g_\eta}(x,y),
            \qquad x,y\in\mathcal K.
        \end{equation}
    \end{enumerate}
\end{proposition}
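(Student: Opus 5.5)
Part (i) is immediate from the assumption $\nabla^2 g_\eta\succ0$ together with \cref{lem:local-strong-convexity}. Since $g_\eta\in C^2(\RR^d)$ is convex, for every nonempty compact convex $\mathcal K$ we get constants $0<m_{\mathcal K}\le M_{\mathcal K}$ satisfying \eqref{eq:local-hessian-bounds}--\eqref{eq:local-symmetric-gradient-bounds} with $h=g_\eta$. For a general compact $\mathcal K$ (not necessarily convex) I will replace it by its closed convex hull, which is compact. All later bounds on arbitrary compact sets are obtained this way.

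Parts (ii) and (iii) split into the cases $\eta=0$ and $\eta>0$.

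\emph{Case $\eta=0$.} Here $g_0=f$, so (ii) is exactly \cref{lem:locallip}. The first inequality \eqref{eq:local-comparison} of (iii) is \cref{def:local-dual-relative-smoothness} verbatim.

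\emph{Case $\eta>0$.} For (ii), note that $\nabla f$ is $C^1$, hence locally Lipschitz, and $\nabla\phi$ is locally Lipschitz on $\Omega_\phi\supset\nabla f(\RR^d)$ by \cref{assump:main}(iii). The composition of locally Lipschitz maps is locally Lipschitz: around $\bar x$, take a neighborhood $U$ of $\nabla f(\bar x)$ on which $\nabla\phi$ is Lipschitz, and intersect its preimage with a neighborhood of $\bar x$ on which $\nabla f$ is Lipschitz. For \eqref{eq:local-comparison}, I need a quadratic upper bound for $D_\phi$ on the compact set $\nabla f(\mathcal K)$ and a quadratic lower bound for $D_{g_\eta}$ on $\operatorname{conv}\mathcal K$. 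The lower bound $D_{g_\eta}(y,x)\ge \frac{m}{2}\norm{x-y}^2$ comes from (i).

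\textbf{Main obstacle.} The delicate step is the upper bound on $D_\phi$. Local Lipschitz continuity of $\nabla\phi$ gives uniform Lipschitz constants only on compact subsets of $\Omega_\phi$, and only along segments that stay inside $\Omega_\phi$. The segment between $\nabla f(x)$ and $\nabla f(y)$ need not lie in $\nabla f(\mathcal K)$. I plan to handle this as follows.
\begin{enumerate}
\item Let $\mathcal C:=\nabla f(\operatorname{conv}\mathcal K)$, which is compact and contained in the open set $\Omega_\phi$. Its convex hull $\operatorname{conv}\mathcal C$ is compact, and it lies in $\Omega_\phi$ because $\Omega_\phi$ is convex (as the interior of the domain of a convex function).
\item A standard compactness (Lebesgue number) argument shows that $\nabla\phi$ is Lipschitz with a single constant $L_\phi$ on the compact convex set $\operatorname{conv}\mathcal C$.
\item With $u=\nabla f(x)$ and $v=\nabla f(y)$, the segment $[v,u]$ lies in $\operatorname{conv}\mathcal C$. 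Writing $D_\phi(u,v)=\int_0^1\langle\nabla\phi(v+t(u-v))-\nabla\phi(v),u-v\rangle\,dt$ gives $D_\phi(u,v)\le \frac{L_\phi}{2}\norm{u-v}^2$.
\item Since $\nabla f=\nabla g_\eta-\eta\,\mathrm{id}$, we have $\norm{u-v}\le (M+\eta)\norm{x-y}$ on $\operatorname{conv}\mathcal K$.
\end{enumerate}
Combining these yields
\[
D_\phi(\nabla f(x),\nabla f(y))\le \frac{L_\phi(M+\eta)^2}{m}\,D_{g_\eta}(y,x),
\]
so $L_{\mathcal K}:=L_\phi(M+\eta)^2/m$ works. (Local Lipschitz continuity of $\nabla^2 f$ is not needed here.)

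Finally, the symmetrized bound \eqref{eq:local-comparison-sym} follows in both cases by applying \eqref{eq:local-comparison} to $(x,y)$ and to $(y,x)$ and adding the two inequalities. This uses $\Delta_\phi(u,v)=D_\phi(u,v)+D_\phi(v,u)$ and the same identity for $\Delta_{g_\eta}$.
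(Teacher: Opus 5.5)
Your proposal is correct and follows the paper's proof essentially step for step. Part (i) comes from \cref{lem:local-strong-convexity}, and the case $\eta=0$ from \cref{lem:locallip} and the definition. For $\eta>0$, both arguments use the chain $D_\phi(\nabla f(x),\nabla f(y))\le\frac{L_\phi}{2}\norm{\nabla f(x)-\nabla f(y)}^2\lesssim\norm{x-y}^2\lesssim D_{g_\eta}(y,x)$ on compact convex enlargements inside $\RR^d$ and $\Omega_\phi$, followed by symmetrization. The only difference is that you prove by hand what the paper cites from \citet[Theorem~9.2]{Rockafellar1997-de} and \citet[Lemma~5.7]{beck1stoder}: the Lebesgue-number argument, the integral form of the descent lemma, and the Lipschitz bound on $\nabla f$ via $\nabla f=\nabla g_\eta-\eta\,\mathrm{id}$.
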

\begin{proof}[Proof of \cref{prop:local-regularity}]
Part (i) follows from \cref{lem:local-strong-convexity} applied to $g_\eta$.
If $\eta=0$, part (ii) follows from \cref{lem:locallip}, while \eqref{eq:local-comparison} is the assumed local dual relative smoothness, since $g_0=f$.

If $\eta>0$, $\nabla f$ is locally Lipschitz because $f\in C^2$, and $\nabla\phi$ is locally Lipschitz on $\Omega_\phi$ by \cref{assump:main}(iii).
Their composition is therefore locally Lipschitz~\citep[Exercise~9.8(c)]{Rockafellar1997-de}.
Fix a nonempty compact $\mathcal K\subset\RR^d$.
Choose a Lipschitz constant $L_f$ of $\nabla f$ on a compact convex set containing $\mathcal K$, and a Lipschitz constant $L_\phi$ of $\nabla\phi$ on a compact convex subset of $\Omega_\phi$ containing $\nabla f(\mathcal K)$~\citep[Theorem~9.2]{Rockafellar1997-de}.
Part (i) gives $m_g>0$ such that $\nabla^2g_\eta\succeq m_g I$ on a compact convex set containing $\mathcal K$.
The descent lemma~\citep[Lemma~5.7]{beck1stoder} yields
\begin{align}
    D_\phi(\nabla f(x),\nabla f(y))
    &\le \frac{L_\phi}{2}\norm{\nabla f(x)-\nabla f(y)}^2
    \le \frac{L_\phi L_f^2}{2}\norm{x-y}^2
    \le \frac{L_\phi L_f^2}{m_g}D_{g_\eta}(y,x).
\end{align}
This proves \eqref{eq:local-comparison}.
In either case, adding it with $x,y$ interchanged gives \eqref{eq:local-comparison-sym}.
\end{proof}

For $\eta=0$, \eqref{eq:local-comparison} is local dual relative smoothness as defined in \cref{def:local-dual-relative-smoothness}.
For $\eta>0$, its left-hand side still uses $\nabla f$, while the right-hand side uses the convexified function $g_\eta$; it is a consequence of the regularity assumptions rather than an additional hypothesis.
The locally Lipschitz Hessian is used separately in the weakly convex stepsize analysis.

\subsection{\texorpdfstring{Proof of \cref{lem:basic-two-step-identities}}{Proof of the common two-step identities}}\label[appendix]{app:common-two-step-proof}
\begin{proof}[Proof of \cref{lem:basic-two-step-identities}]
We adapt the argument of \citet[Lemma~3.1]{adabregprox2025} to DPGD.
   Since \eqref{eq:update} gives $T_k(x_{k-1})=x_k$,
    \begin{equation}
       T_k(x_{k-1})-T_k(x_k)
        =\gamma_k\nabla\phi(\nabla f(x_k)).
    \end{equation}
Expanding $D_\phi(\nabla f(x_{k-1}),\nabla f(x_k))$, we obtain
    \begin{align}
       D_\phi(\nabla f(x_{k-1}),\nabla f(x_k))
        &=
        \phi(\nabla f(x_{k-1}))-\phi(\nabla f(x_k))
        \\
        &\quad
        -\frac{1}{\gamma_k}
        \inpr{T_k(x_{k-1})-T_k(x_k)}
        {\nabla f(x_{k-1})-\nabla f(x_k)}.
    \end{align}
   By the definition of $T_k$, the last inner product is
    \begin{align}
        &\inpr{T_k(x_{k-1})-T_k(x_k)}
        {\nabla f(x_{k-1})-\nabla f(x_k)}
        \\
        &\qquad=
        \Delta_f(x_k,x_{k-1})
        -\gamma_k\Delta_\phi(\nabla f(x_k),\nabla f(x_{k-1})).
    \end{align}
Combining these expressions and using $G_k$ gives \eqref{eq:common-gap-identity}.

   Next, suppose that $\nabla f(\bar x)=0$. By \eqref{eq:update},
    \begin{equation}
        0=
        \inpr{\nabla\phi(\nabla f(x_k))}{\nabla f(\bar x)-\nabla f(x_{k+1})}
        -\frac{1}{\gamma_{k+1}}
        \inpr{x_k-x_{k+1}}{\nabla f(\bar x)-\nabla f(x_{k+1})}.
    \end{equation}
For the first term, \eqref{eq:def-cross-term} and $\gamma_{k+1}=\beta_{k+1}\gamma_k$ give
    \begin{align}
        \inpr{\nabla\phi(\nabla f(x_k))}{\nabla f(\bar x)-\nabla f(x_{k+1})}
        &=-G_k-D_\phi(0,\nabla f(x_k))
        +\frac{1}{\gamma_{k+1}}\chi_{k+1}.
    \end{align}
   The algebraic three-point identity \eqref{eq:bregman-three-point} also gives
    \begin{align}
        -\inpr{x_k-x_{k+1}}{\nabla f(\bar x)-\nabla f(x_{k+1})}
        &=D_f(x_k,\bar x)-D_f(x_{k+1},\bar x)-D_f(x_k,x_{k+1}).
    \end{align}
   Substituting these expressions and rearranging yields \eqref{eq:common-distance-identity}.
\end{proof}

\section{Proofs for the Convex Analysis}\label[appendix]{app:convex-convergence-proofs}
Throughout this appendix, $\eta=0$ and the algorithm uses \eqref{eq:stepsize-rule}. We omit the superscript $0$ from the local quantities, as in \cref{subsec:convex-convergence}.

\subsection{Proof of the convex Lyapunov inequality}\label[appendix]{app:proof-convex-lyapunov}
Write $\rhm:=(1+\sqrt5)/2$. Since $\beta_0=1$, induction using the update rule gives
\begin{equation}\label{ineq:beta-bound}
    0<\beta_k\le\bar\beta_k,
    \qquad
    1<\bar\beta_k\le\rhm,
    \qquad k\ge1.
\end{equation}

The definition of $\zeta_k$ gives
\begin{equation}
    \sdf(\x,\xb)
    =
    \frac{1+\zeta_k}{\zeta_k}D_f(\xb,\x).
\end{equation}

\begin{proof}[Proof of \cref{lem:lyap}]
We adapt the argument of \citet[Lemmas~3.2 and~3.6]{adabregprox2025} to DPGD.
   Multiply \eqref{eq:common-gap-identity} in \cref{lem:basic-two-step-identities} by $\gamma_{k+1}\bar\beta_{k+1}$ and
   apply \eqref{eq:common-distance-identity} with $\bar x=\xo$.
   Convexity of $\phi$ gives
    \begin{equation}
       D_\phi(0,\nabla f(x_k))\ge0,
        \qquad
       D_\phi(\nabla f(x_{k-1}),\nabla f(x_k))\ge0,
    \end{equation}
    so
    \begin{align}
        &D_f(x_{k+1},\xo)
        +\gamma_{k+1}(1+\bar\beta_{k+1})G_k
        +D_f(x_k,x_{k+1})
        \\
        &\quad\le
       D_f(x_k,\xo)
        +\gamma_{k+1}\bar\beta_{k+1}G_{k-1}
        -\beta_{k+1}\bar\beta_{k+1}(1-\gamma_k \widehat L_k^{\mathrm d})\Delta_f(x_k,x_{k-1})
        +\chi_{k+1}.
        \label{eq:convex-combined-two-step}
    \end{align}

   Applying \eqref{eq:bregman-three-point} with $h=f$, $x=x_k$, $y=x_{k+1}$, and $z=x_k-2\bar\beta_{k+1}[T_k(x_{k-1})-T_k(x_k)]$, and using convexity of $f$, gives
    \begin{align}
    &\chi_{k+1} \le \frac{\beta_{k+1}}{2\bar\beta_{k+1}}D_f(x_k,x_{k+1}) +\beta_{k+1}\bar\beta_{k+1} \mathcal R_{k,2\bar\beta_{k+1}}\Delta_f(x_k,x_{k-1}). \label{eq:convex-cross-term-bound}
\end{align}
   Substituting \eqref{eq:convex-cross-term-bound} into \eqref{eq:convex-combined-two-step} yields
    \begin{align}
        &D_f(x_{k+1},\xo)
        +\gamma_{k+1}(1+\bar\beta_{k+1})G_k
        +\paren*{1-\frac{\beta_{k+1}}{2\bar\beta_{k+1}}}D_f(x_k,x_{k+1})
        \\
        &\quad\le
       D_f(x_k,\xo)
        +\gamma_{k+1}\bar\beta_{k+1}G_{k-1}
        \\
        &\qquad
        +\beta_{k+1}\bar\beta_{k+1}
        \left(
            \mathcal R_{k,2\bar\beta_{k+1}}-(1-\gamma_k \widehat L_k^{\mathrm d})
        \right)
        \Delta_f(x_k,x_{k-1}).
        \label{eq:lyapintermediate}
    \end{align}

   Meanwhile, \eqref{eq:stepsize-rule} ensures
    \begin{equation}\label{eq:convex-stepsize-absorption}
        \beta_{k+1}\bar\beta_{k+1}
        \left[
            \mathcal R_{k,2\bar\beta_{k+1}}-(1-\gamma_k \widehat L_k^{\mathrm d})
        \right]_+
        \frac{1+\zeta_k}{\zeta_k}
        \le\frac12.
    \end{equation}
    Using $\Delta_f(x_k,x_{k-1})=\frac{1+\zeta_k}{\zeta_k}D_f(x_{k-1},x_k)$, $a\le[a]_+$, and $\beta_k\le\bar\beta_k$, \eqref{eq:convex-stepsize-absorption} gives
    \begin{align}
        &\beta_{k+1}\bar\beta_{k+1}
        \left(
            \mathcal R_{k,2\bar\beta_{k+1}}-(1-\gamma_k \widehat L_k^{\mathrm d})
        \right)
        \Delta_f(x_k,x_{k-1})
        \\
        &\quad\le
        \beta_{k+1}\bar\beta_{k+1}
        \left[
            \mathcal R_{k,2\bar\beta_{k+1}}-(1-\gamma_k \widehat L_k^{\mathrm d})
        \right]_+
        \frac{1+\zeta_k}{\zeta_k}D_f(x_{k-1},x_k)
        \\
        &\quad\le
        \frac12D_f(x_{k-1},x_k)
        \le
        \paren*{1-\frac{\beta_k}{2\bar\beta_k}}D_f(x_{k-1},x_k).
        \label{eq:stepsizerulebound}
    \end{align}

   Combining \eqref{eq:lyapintermediate} and \eqref{eq:stepsizerulebound}, and using $\gamma_{k+1}=\beta_{k+1}\gamma_k$, yields
    \begin{align}
        \Uf
        &\le
        \U-\ga\paren*{1+\rhh-\rf\rhf}\Pb.
    \end{align}
   Finally, $\beta_{k+1}\le\bar\beta_{k+1}$ and $\bar\beta_{k+1}^2=1+\beta_k$ imply
    \begin{align}
        1+\bar\beta_k-\beta_{k+1}\bar\beta_{k+1}
        &\ge
        1+\bar\beta_k-\bar\beta_{k+1}^2
        =\bar\beta_k-\beta_k
        \ge0.
    \end{align}

\end{proof}
\subsection{Consequences of the Lyapunov inequality}
We establish the technical lemmas needed for the convergence guarantees. For $K\ge0$, write
\begin{equation}\label{eq:def-Gmin}
   G_K^{\min}:=\min_{0\le k\le K}G_k.
\end{equation}

First, \cref{lem:lyap} implies the following.
\begin{lemma}\label{lem:lyapunov}
   Under the update rule in \cref{def:adadpgdupdate} and \cref{assump:main}, the following hold.
    \begin{enumerate}
        \item $\paren*{\U}_{k\ge1}$ is nonincreasing and bounded below, hence convergent.
        \item For every $K\ge1$, $\Pmk\le\frac{\UU_1}{\sum^{K+1}_{k=1}\ga}$.
    \end{enumerate}
\end{lemma}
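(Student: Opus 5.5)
The approach is to read both parts off \cref{lem:lyap}. The lemma gives $\mathcal E_{k+1}\le\mathcal E_k-\gamma_k(1+\bar\beta_k-\beta_{k+1}\bar\beta_{k+1})G_{k-1}$ for all $k\ge1$, and we already showed that the coefficient $1+\bar\beta_k-\beta_{k+1}\bar\beta_{k+1}\ge\bar\beta_k-\beta_k\ge0$.

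For part (i), monotonicity follows immediately. For the lower bound, every term of $\mathcal E_k$ is nonnegative:
\begin{itemize}
\item $D_f(x_k,x_\star)=f(x_k)-f_\star\ge0$, since $\nabla f(x_\star)=0$.
\item $\gamma_k(1+\bar\beta_k)G_{k-1}\ge0$, since $\gamma_k>0$ and $G_{k-1}\ge0$ by \cref{assump:main}(ii).
\item The coefficient $1-\beta_k/(2\bar\beta_k)$ of $D_f(x_{k-1},x_k)$ is at least $1/2$ by \eqref{ineq:beta-bound}, and $D_f(x_{k-1},x_k)\ge0$ by convexity of $f$.
\end{itemize}
Hence $\mathcal E_k\ge0$, and a nonincreasing sequence bounded below converges.

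For part (ii), the plan is to telescope, but the per-step coefficient $1+\bar\beta_k-\beta_{k+1}\bar\beta_{k+1}$ may be arbitrarily small, so a naive sum does not directly give $\sum\gamma_k$. I would avoid this by telescoping the Lyapunov terms differently. Rewrite the inequality as
\[
\mathcal E'_{k+1}+\gamma_{k+1}(1+\bar\beta_{k+1})G_k\le \mathcal E'_k+\gamma_{k+1}\bar\beta_{k+1}G_{k-1},
\]
where $\mathcal E'_k:=\mathcal E_k-\gamma_k(1+\bar\beta_k)G_{k-1}$; this is the form \eqref{eq:lyapintermediate} combined with \eqref{eq:stepsizerulebound}. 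Since $\gamma_{k+1}\bar\beta_{k+1}G_{k-1}\le\gamma_k\bar\beta_k\cdot(\ldots)$, this is precisely the reshuffled statement. Summing over $k=1,\dots,K$, the $G$-terms shift index: the left side contributes $\sum_{k=1}^{K}\gamma_{k+1}(1+\bar\beta_{k+1})G_k$ and the right side $\sum_{k=1}^{K}\gamma_{k+1}\bar\beta_{k+1}G_{k-1}$. Reindexing and cancelling the $\bar\beta$-parts leaves roughly $\sum_{k}\gamma_{k+1}G_k$ on the left. The mismatch between $\gamma_{k+2}\bar\beta_{k+2}$ and $\gamma_{k+1}\bar\beta_{k+1}$ is controlled exactly by $\beta_{k+2}\bar\beta_{k+2}\le 1+\bar\beta_{k+1}$.

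Concretely, define $w_k:=\gamma_{k+1}(1+\bar\beta_{k+1})-\gamma_{k+2}\bar\beta_{k+2}=\gamma_{k+1}(1+\bar\beta_{k+1}-\beta_{k+2}\bar\beta_{k+2})\ge0$. The summed inequality gives $\sum_k w_kG_k\le\mathcal E_1$ plus a boundary term. The boundary term $\gamma_{K+2}\bar\beta_{K+2}G_K$ survives on the left and adds weight, and the total weight on the left equals $\sum_{k=1}^{K+1}\gamma_k$ up to the initial term $\gamma_1\bar\beta_1G_0$, which sits inside $\mathcal E_1$.

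Bounding each $G_k\ge G^{\min}_K$ then gives $G^{\min}_K\sum_{k=1}^{K+1}\gamma_k\le\mathcal E_1$, which is the claim. The main obstacle is this index bookkeeping: verifying that the accumulated weights telescope to exactly $\sum_{k=1}^{K+1}\gamma_k$ (using $\bar\beta_{k+1}^2=1+\beta_k$ and $\gamma_{k+1}=\beta_{k+1}\gamma_k$) with nonnegative leftovers. If the weights do not telescope exactly, I would fall back on bounding the leftovers by $\ge0$ and accept a constant factor.
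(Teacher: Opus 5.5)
Your proof is correct and is essentially the paper's: sum the Lyapunov inequality over $k=1,\dots,K$ and add the terminal bound $\mathcal E_{K+1}\ge\gamma_{K+1}(1+\bar\beta_{K+1})G_K$. The weights $\gamma_k(1+\bar\beta_k-\beta_{k+1}\bar\beta_{k+1})$, which are your $w_{k-1}$, then telescope exactly to $\gamma_1\bar\beta_1+\sum_{k=1}^{K+1}\gamma_k$, and the surplus $\gamma_1\bar\beta_1$ is dropped; so the ``naive'' sum already works once the terminal term is kept, and no constant-factor fallback is needed.
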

\begin{proof}
We adapt the argument of \citet[Lemma~4.1]{adabregprox2025} to DPGD.
   Since $G_k\ge0$, we have $\U\ge0$, and the first claim follows from \cref{lem:lyap}. In particular,
    \begin{equation}
        0\le\Uf\le\U-\ga(1+\rhh-\rhf\rf)\Pb.
    \end{equation}
Summing from $k=1$ to $K$ gives
\begin{align}
    &\Pmk \sum_{k=1}^{K} \gamma_k \paren*{ 1+\bar{\beta}_k -\beta_{k+1}\bar{\beta}_{k+1} }\le \sum_{k=1}^{K} \gamma_k \paren*{ 1+\bar{\beta}_k -\beta_{k+1}\bar{\beta}_{k+1} } G_{k-1} \le \UU_1-\UU_{K+1}. \label{eq:lyap-telescoping}
\end{align}
The definition of $\UU_{K+1}$ and nonnegativity of its terms give
\begin{align}
    &\UU_{K+1} \ge \gamma_{K+1} \paren*{1+\bar{\beta}_{K+1}}G_K \ge \gamma_{K+1} \paren*{1+\bar{\beta}_{K+1}}\Pmk. \label{eq:terminal-lower-bound}
\end{align}
Combining \eqref{eq:lyap-telescoping} and
\eqref{eq:terminal-lower-bound} yields
\begin{align}
    \UU_1
    &\ge
    \Pmk
    \left[
        \sum_{k=1}^{K}
        \gamma_k
        \paren*{
            1+\bar{\beta}_k
            -\beta_{k+1}\bar{\beta}_{k+1}
        }
        +
        \gamma_{K+1}
        \paren*{1+\bar{\beta}_{K+1}}
    \right].
    \label{eq:pmin-pre}
\end{align}
Since $\gamma_{k+1}=\beta_{k+1}\gamma_k$,
\begin{align}
    &\sum_{k=1}^{K} \gamma_k \paren*{ 1+\bar{\beta}_k -\beta_{k+1}\bar{\beta}_{k+1} } + \gamma_{K+1} \paren*{1+\bar{\beta}_{K+1}} \\
    &\quad = \sum_{k=1}^{K} \paren*{ \gamma_k +\gamma_k\bar{\beta}_k -\gamma_{k+1}\bar{\beta}_{k+1} } + \gamma_{K+1} +\gamma_{K+1}\bar{\beta}_{K+1} = \gamma_1\bar{\beta}_1 + \sum_{k=1}^{K+1}\gamma_k \ge \sum_{k=1}^{K+1}\gamma_k. \label{eq:coefficient-telescoping}
\end{align}
Thus \eqref{eq:pmin-pre} gives
\begin{align}
    \Pmk
    \le
    \frac{\UU_1}{
        \gamma_1\bar{\beta}_1
        +\sum_{k=1}^{K+1}\gamma_k
    }\le
    \frac{\UU_1}{
        \sum_{k=1}^{K+1}\gamma_k
    },
\end{align}
proving the second claim.
\end{proof}
This result reduces the convergence argument to showing $\sum^\infty_{k=1}\ga=\infty$.
\begin{comment}
   For unconstrained DPGD, we also have the following auxiliary result.
\begin{lemma}\label{lem:shiftedseq}
   Suppose that \cref{assump:main} holds. Let $\setE{y_k}\subset\RR^d$ satisfy $y_k\to\xo$ and, for every $k\in\NN$, set
    \begin{equation}
        \bar{y}_k:=y_k-\gf\np{y_k},
    \end{equation}
    where $\setE{\ga}$ is a bounded positive sequence. Then $\bar{y}_k\to\xo$, $\nf(\bar{y}_k)\to0$, and $\gf(\ph{\bar{y}_k}-\min\phi)\to0$.
\end{lemma}
\begin{proof}
   This follows directly from \cref{assump:main} and \eqref{eq:update}.
\end{proof}
\end{comment}

\subsection{Local estimates for vanishing stepsizes}
The next lemma gives the limit of the curvature ratio for the forward map along subsequences whose stepsizes approach zero.
\begin{lemma}\label{lem:gammatozero}
Under \cref{assump:main}, suppose that the sequence $\setE{\x}$ generated by \cref{def:adadpgdupdate} lies in a compact set $\mathcal C\subset\RR^d$. Then, for any index sequence $\setE{k_j}$,
\begin{equation}\label{eq:convex-subsequence-gamma-zero}
    \gamma_{k_j}\to0
\end{equation}
implies
\begin{equation}\label{eq:convex-subsequence-R-one}
    \mathcal R_{k_j,2\bar\beta_{k_j+1}}\to1.
\end{equation}

\end{lemma}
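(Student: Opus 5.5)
We need to show that $\mathcal R_{k,2\bar\beta_{k+1}}\to1$ along the subsequence. Write $\delta_j:=2\bar\beta_{k_j+1}\in(2,2\rhm]$ by \eqref{ineq:beta-bound}, and
\[
v_j:=T_{k_j}(x_{k_j})-T_{k_j}(x_{k_j-1})=(x_{k_j}-x_{k_j-1})-\gamma_{k_j}\bigl[\np{x_{k_j}}-\np{x_{k_j-1}}\bigr].
\]
Here $\np{\cdot}$ abbreviates $\nabla\phi(\nabla f(\cdot))$. With $w_j:=x_{k_j}-x_{k_j-1}\neq0$, we have
\[
\mathcal R_{k_j,\delta_j}=\frac{2D_f(x_{k_j}+\delta_jv_j,x_{k_j})}{\delta_j^2\Delta_f(x_{k_j},x_{k_j-1})}.
\]
The idea is that $v_j=w_j+o(\norm{w_j})$ as $\gamma_{k_j}\to0$, and that both the numerator and the denominator equal $\langle\nabla^2f(x_{k_j})w_j,w_j\rangle\,(1+o(1))$ after scaling.

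\textbf{Step 1: $v_j$ is a small perturbation of $w_j$.} Let $\mathcal C'$ be a compact convex set containing $\operatorname{conv}\mathcal C$ together with a neighborhood of it. By \cref{prop:local-regularity}(ii), $\nabla\phi\circ\nabla f$ is locally Lipschitz, hence Lipschitz with some constant $L$ on $\operatorname{conv}\mathcal C$ by a compactness/covering argument. This gives $\norm{v_j-w_j}\le\gamma_{k_j}L\norm{w_j}$, so $\norm{v_j}\le(1+o(1))\norm{w_j}$.

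\textbf{Step 2: the step stays in $\mathcal C'$.} The iterates lie in $\mathcal C$, so $\norm{w_j}\le\operatorname{diam}\mathcal C$. However, $x_{k_j}+\delta_jv_j$ need not lie in $\mathcal C'$ unless $\norm{w_j}$ is small. I would handle this by proving $\norm{w_j}\to0$ directly: $w_j=-\gamma_{k_j}\np{x_{k_j-1}}$, and $\np{\cdot}$ is bounded on $\mathcal C$, so $\norm{w_j}\le\gamma_{k_j}\sup_{\mathcal C}\norm{\nabla\phi\circ\nabla f}\to0$. Hence for large $j$ all points $x_{k_j}+t\delta_jv_j$, $t\in[0,1]$, and the segment $[x_{k_j-1},x_{k_j}]$ lie in a fixed compact convex neighborhood $\mathcal C'$ of $\operatorname{conv}\mathcal C$. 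On $\mathcal C'$, \cref{lem:local-strong-convexity} gives $m I\preceq\nabla^2f\preceq MI$, and $\nabla^2 f$ is uniformly continuous there.

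\textbf{Step 3: Taylor comparison.} By the integral forms in the proof of \cref{lem:local-strong-convexity},
\[
2D_f(x_{k_j}+\delta_jv_j,x_{k_j})=\delta_j^2\int_0^1 2(1-t)\langle\nabla^2f(x_{k_j}+t\delta_jv_j)v_j,v_j\rangle dt,
\]
\[
\Delta_f(x_{k_j},x_{k_j-1})=\int_0^1\langle\nabla^2f(x_{k_j-1}+tw_j)w_j,w_j\rangle dt .
\]
Let $H_j:=\nabla^2 f(x_{k_j})$. Uniform continuity together with $\norm{w_j},\norm{v_j}\to0$ gives that each Hessian inside the integrals equals $H_j+o(1)$ in operator norm, uniformly in $t$. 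Since $\int_0^12(1-t)dt=1$, the numerator divided by $\delta_j^2$ is $\langle H_jv_j,v_j\rangle+o(\norm{v_j}^2)$, and the denominator is $\langle H_jw_j,w_j\rangle+o(\norm{w_j}^2)$. Step 1 gives $\langle H_jv_j,v_j\rangle=\langle H_jw_j,w_j\rangle+O(\gamma_{k_j})\norm{w_j}^2$. Dividing by $\langle H_jw_j,w_j\rangle\ge m\norm{w_j}^2>0$ yields $\mathcal R_{k_j,\delta_j}=1+o(1)$.

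The main obstacle is Step 2, namely guaranteeing that the extrapolated point $x_{k_j}+\delta_jv_j$ stays in a region with uniform Hessian bounds. This is resolved by the observation $\norm{w_j}=O(\gamma_{k_j})$. Everything else is uniform continuity of $\nabla^2 f$ on a compact set combined with the bounded factor $\delta_j$.
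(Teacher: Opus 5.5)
Your proposal is correct and follows essentially the same route as the paper. Both arguments bound $\|x_{k_j}-x_{k_j-1}\|=O(\gamma_{k_j})$ using boundedness of $\nabla\phi\circ\nabla f$ on $\mathcal C$, and both use its Lipschitz constant to make the correction $\gamma_{k_j}[\nabla\phi(\nabla f(x_{k_j}))-\nabla\phi(\nabla f(x_{k_j-1}))]$ of relative size $O(\gamma_{k_j})$. Both then conclude from uniform continuity of $\nabla^2 f$ and uniform Hessian bounds on a compact convex set. The only cosmetic difference is that the paper uses mean-value points $\xi_j,\widetilde\xi_j$ and compares $\nabla^2 f(\xi_j)$ with $\nabla^2 f(\widetilde\xi_j)$, whereas you use the integral forms and compare both Hessians to $\nabla^2 f(x_{k_j})$.
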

\begin{proof}
We adapt the argument of \citet[Lemma~4.2]{adabregprox2025} to DPGD.
   Set $u_k:=\x-\xb$ and $v_k:=\np{\x}-\np{\xb}$. Then $T_k(\x)-T_k(\xb)=u_k-\ga v_k$.
   By \cref{lem:locallip}, $\nabla\phi\circ\nabla f$ is Lipschitz continuous and bounded on $\mathcal C$. Hence there exist $L_F,M_F>0$ such that
    \begin{equation}\label{eq:convex-u-v-bounds}
        \norm{v_k}\le L_F\norm{u_k},
        \qquad
        \norm{u_k}=\gamma_k\norm{\np{x_{k-1}}}\le M_F\gamma_k.
    \end{equation}
    By \eqref{eq:convex-subsequence-gamma-zero}, we may assume $\gamma_{k_j}L_F\le1$ for sufficiently large $j$. Thus \eqref{eq:convex-u-v-bounds} gives
    \begin{equation}\label{eq:convex-subsequence-T-bound}
        \norm{T_{k_j}(x_{k_j})-T_{k_j}(x_{k_j-1})}
        \le
        \norm{u_{k_j}}+\gamma_{k_j}\norm{v_{k_j}}
        \le
        2M_F\gamma_{k_j}
        \to0.
    \end{equation}
   Moreover, \eqref{ineq:beta-bound} gives $2\bar\beta_{k_j+1}\le2\rhm$, and therefore
    \begin{equation}
        2\bar\beta_{k_j+1}
        \norm{T_{k_j}(x_{k_j})-T_{k_j}(x_{k_j-1})}
        \to0.
    \end{equation}
   Thus we can choose $\xi_j$ on the segment between $x_{k_j}$ and $x_{k_j}+2\bar\beta_{k_j+1}(T_{k_j}(x_{k_j})-T_{k_j}(x_{k_j-1}))$, and $\widetilde\xi_j$ on the segment between $x_{k_j}$ and $x_{k_j-1}$, such that
    \begin{align}
        \mathcal R_{k_j,2\bar\beta_{k_j+1}}
        &=
        \frac{
            \inpr{\nabla^2f(\xi_j)(u_{k_j}-\gamma_{k_j}v_{k_j})}
            {u_{k_j}-\gamma_{k_j}v_{k_j}}
        }{
            \inpr{\nabla^2f(\widetilde\xi_j)u_{k_j}}{u_{k_j}}
        }
        \\
        &=
        \frac{\inpr{\nabla^2f(\xi_j)u_{k_j}}{u_{k_j}}}
        {\inpr{\nabla^2f(\widetilde\xi_j)u_{k_j}}{u_{k_j}}}
        -2\gamma_{k_j}
        \frac{\inpr{\nabla^2f(\xi_j)u_{k_j}}{v_{k_j}}}
        {\inpr{\nabla^2f(\widetilde\xi_j)u_{k_j}}{u_{k_j}}}
        +\gamma_{k_j}^2
        \frac{\inpr{\nabla^2f(\xi_j)v_{k_j}}{v_{k_j}}}
        {\inpr{\nabla^2f(\widetilde\xi_j)u_{k_j}}{u_{k_j}}}.
        \label{eq:ratiohess}
    \end{align}
Equations~\eqref{eq:convex-u-v-bounds} and \eqref{eq:convex-subsequence-T-bound} give $u_{k_j}\to0$ and
    \begin{equation}
        \norm{\xi_j-\widetilde\xi_j}
        \le
        2\bar\beta_{k_j+1}
        \norm{T_{k_j}(x_{k_j})-T_{k_j}(x_{k_j-1})}
        +\norm{u_{k_j}}
        \to0.
    \end{equation}
   Hence a compact convex set $\mathcal K$ contains $\mathcal C,\setE{\xi_j},\setE{\widetilde\xi_j}$.
   Apply \cref{lem:local-strong-convexity} with $h=f$ and denote the Hessian bounds on $\mathcal K$ by $m^f_{\mathcal K},M^f_{\mathcal K}>0$. Equations~\eqref{eq:ratiohess} and \eqref{eq:convex-u-v-bounds} give
    \begin{align}
        \abs*{\mathcal R_{k_j,2\bar\beta_{k_j+1}}-1}
        &\le
        \frac{1}{m^f_{\mathcal K}}
        \norm{\nabla^2f(\xi_j)-\nabla^2f(\widetilde\xi_j)}
        +2\gamma_{k_j}\frac{M^f_{\mathcal K}L_F}{m^f_{\mathcal K}}
        +\gamma_{k_j}^2\frac{M^f_{\mathcal K}L_F^2}{m^f_{\mathcal K}}.
        \label{eq:convex-R-ratio-bound}
    \end{align}
   Since $\nabla^2f$ is uniformly continuous on $\mathcal K$ and $\norm{\xi_j-\widetilde\xi_j}\to0$, the right-hand side of \eqref{eq:convex-R-ratio-bound} tends to $0$. This proves \eqref{eq:convex-subsequence-R-one}.
\end{proof}

\subsection{Iterate boundedness and nonsummability of stepsizes}
\begin{proposition}\label{prop:claims}
Under the update rule in \cref{def:adadpgdupdate} and \cref{assump:main}, the following hold.
\begin{enumerate}
    \item There is a compact set $\K$ containing $\setE{\x}\subset\RR^d$. Consequently, a compact set $\K^*\subset\Omega_\phi$ also contains $\setE{\nf(\x)}$.\label[proposition]{claim:one}
    \item
    \begin{equation}\label{eq:convex-sum-gamma}
        \sum_{k=1}^{\infty}\gamma_k=\infty.
    \end{equation}
\label[proposition]{claim:two}
    \item A subsequence of $\setE{\x}$ converges to $\xo$. Correspondingly, a subsequence of $\setE{\nf(\x)}$ converges to $0$.\label[proposition]{claim:three}
\end{enumerate}
\end{proposition}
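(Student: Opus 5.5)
We prove the three parts in order, each relying on the previous one.

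\textbf{Part (i).} By \cref{lem:lyapunov}(i), $\mathcal E_k\le\mathcal E_1$ for all $k\ge1$. Every term of $\mathcal E_k$ is nonnegative; the last coefficient is $1-\beta_k/(2\bar\beta_k)\ge\tfrac12$ by \eqref{ineq:beta-bound}. Hence
\[
D_f(x_k,x_\star)=f(x_k)-f_\star\le\mathcal E_1 .
\]
So every iterate lies in the sublevel set $\{x: f(x)\le f_\star+\mathcal E_1\}$, which is bounded by level boundedness (\cref{assump:main}(i)) and closed by continuity. Call this compact set $\mathcal K$. Since $\nabla f$ is continuous, $\mathcal K^*:=\nabla f(\mathcal K)$ is compact, and it lies in $\Omega_\phi$ because $\nabla f(\RR^d)\subset\Omega_\phi$ for $\phi\in\mathcal P$.

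\textbf{Part (ii).} Suppose instead that $\sum_k\gamma_k<\infty$. Then $\gamma_k\to0$. Since $\gamma_{k+1}=\beta_{k+1}\gamma_k$, we get $\liminf_k\beta_k<1$; otherwise $\gamma_k$ would eventually be nondecreasing and could not tend to $0$. The goal is to contradict this by showing that $\beta_{k+1}$ equals $\bar\beta_{k+1}>1$ for all large $k$. We check the two arguments of the minimum in \eqref{eq:stepsize-rule}.

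\emph{Denominator.} By \eqref{eq:local-comparison-sym} on a compact convex set containing $\mathcal K$, we have $\widehat L_k^{\mathrm d}\le L_{\mathcal K}$, so $\gamma_k\widehat L_k^{\mathrm d}\to0$. \cref{lem:gammatozero}, applied to the whole sequence, gives $\mathcal R_{k,2\bar\beta_{k+1}}\to1$. Therefore
\[
\bigl[\mathcal R_{k,2\bar\beta_{k+1}}-(1-\gamma_k\widehat L_k^{\mathrm d})\bigr]_+\to0 .
\]

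\emph{Prefactor.} \eqref{eq:local-bregman-ratio-bound} gives $\zeta_k\ge m_{\mathcal K}/M_{\mathcal K}$, so $\zeta_k/(1+\zeta_k)$ is bounded away from zero. Also $\bar\beta_{k+1}\le\varphi:=(1+\sqrt5)/2$.

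Together these show the second argument of the minimum tends to $+\infty$ (it equals $+\infty$ when the bracket vanishes). So $\beta_{k+1}=\bar\beta_{k+1}$ for all $k\ge k_0$. Then $\beta_{k+1}=\sqrt{1+\beta_k}$ for $k\ge k_0$, hence $\beta_{k+1}>1$ eventually, and $\gamma_k$ is eventually nondecreasing. This contradicts $\gamma_k\to0$ and proves \eqref{eq:convex-sum-gamma}.

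\textbf{Part (iii).} Combining \cref{lem:lyapunov}(ii) with \eqref{eq:convex-sum-gamma} gives $G_K^{\min}\to0$, so $\liminf_kG_k=0$. Take a subsequence with $G_{k_j}\to0$. By part (i), a further subsequence has $x_{k_j}\to\bar x\in\mathcal K$. Continuity of $\phi$ on $\Omega_\phi$ and of $\nabla f$ give $\phi(\nabla f(\bar x))=\phi(0)$. Since $\amin\phi=\{0\}$, this forces $\nabla f(\bar x)=0$. Because $\nabla^2f\succ0$, $f$ is strictly convex, so its unique stationary point is $\bar x=x_\star$. Hence $x_{k_j}\to x_\star$ and $\nabla f(x_{k_j})\to0$.

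The main obstacle is part (ii). Its key step is making sure \cref{lem:gammatozero} yields $\mathcal R\to1$ along the full tail, so that the cap $\bar\beta_{k+1}$ is active for all large $k$ rather than only along a subsequence.
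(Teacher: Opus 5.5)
Your proposal is correct and follows the paper's proof essentially step for step: the Lyapunov sublevel-set argument for (i), the contradiction in (ii) obtained by applying \cref{lem:gammatozero} to the full sequence so that the cap $\bar\beta_{k+1}$ is eventually active (you conclude from eventual strict increase of the positive $\gamma_k$, where the paper uses $\gamma_{k+1}\ge\sqrt2\,\gamma_k$; both work), and the subsequence-plus-uniqueness argument for (iii). Two small points: the bound $D_f(x_k,x_\star)\le\mathcal E_1$ holds only for $k\ge1$, so you must adjoin $x_0$ to the sublevel set as the paper does; and your aside that $\gamma_k\to0$ forces $\liminf_k\beta_k<1$ is false (e.g.\ $\beta_k=(1-1/k)^2$ gives $\gamma_k=\gamma_1/k^2$), though this is harmless because your final contradiction is directly with $\gamma_k\to0$.
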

\begin{proof}
    (i) By \cref{lem:lyapunov}, for all $k\ge1$,
    \begin{equation}
      D_f(\x,\xo)\le\U\le\UU_1.
    \end{equation}
Hence
    \begin{equation}
       x_k\in\setE{x\in\RR^d|f(x)\le f(\xo)+\UU_1},
        \qquad k\ge1.
    \end{equation}
   The right-hand side is compact because $f$ is level bounded, and adjoining $x_0$ preserves compactness. This proves the claim; the statement for the dual sequence follows by continuity of $\nf$.

    (ii) Suppose, toward a contradiction, that \eqref{eq:convex-sum-gamma} fails. Then $\sum_k\gamma_k<\infty$, so $\gamma_k\to0$. Choose a compact convex set $\mathcal K$ containing the compact set in \cref{claim:one}. By \eqref{eq:local-bregman-ratio-bound} in \cref{lem:local-strong-convexity}, $\zeta_k\ge\bar\zeta$ for some $\bar\zeta>0$. Applying local dual relative smoothness in both orders to $x_k,x_{k-1}$ gives $\widehat L_k^{\mathrm d}\le\overline L^{\mathrm d}$ for some $\overline L^{\mathrm d}>0$.
   Applying \cref{lem:gammatozero} with $k_j=j$ yields
    \begin{equation}
        \mathcal R_{k,2\rhf}\to1.
    \end{equation}
The second candidate in \eqref{eq:stepsize-rule} therefore satisfies
    \begin{equation}
        \frac{\zeta_k}{1 + \zeta_k}\frac{1}{2\rhf[\mathcal R_{k,2\rhf}-(1-\ga \widehat L_k^{\mathrm d})]_+}
        \geq
        \frac{\bar\zeta}{1 + \bar\zeta}
        \frac{1}{2\rhm[\mathcal R_{k,2\rhf}-(1-\ga\overline L^{\mathrm d})]_+}
        \to \infty,
    \end{equation}
where $[\mathcal R_{k,2\rhf}-(1-\ga\overline L^{\mathrm d})]_+\to0$ follows from $\gamma_k\to0$ and the preceding limit.
   Thus $\beta_{k+1}=\bar\beta_{k+1}$ for sufficiently large $k$. Advancing the index once more gives $\beta_k=\bar\beta_k>1$, so
    \begin{equation}
        \bar\beta_{k+1}=\sqrt{1+\beta_k}\ge\sqrt2
    \end{equation}
   for all sufficiently large $k$. Hence $\gamma_{k+1}=\bar\beta_{k+1}\gamma_k\ge\sqrt2\gamma_k$, forcing $\gamma_k$ to diverge, contrary to $\gamma_k\to0$.

    (iii) \cref{claim:two} and \cref{lem:lyapunov} give $G_K^{\min}\to0$. Choose indices with $G_{k_j}\to0$. By \cref{claim:one}, pass to a further subsequence if necessary so that $x_{k_j}\to\bar x$. Continuity of $\phi\circ\nabla f$ and \cref{assump:main} give $\bar x=\xo$, and hence $\nabla f(x_{k_j})\to0$.
\end{proof}

In view of \cref{claim:three}, it remains to prove $\x\to\xo$.
We now use these auxiliary results to give complete proofs of the main results.

\subsection{Convergence of the entire sequence}\label[appendix]{app:proof-convex-convergence}
\begin{proof}[Proof of \cref{thm:main}]
We adapt the argument of \citet[Theorem~4.4]{adabregprox2025} to DPGD.
   By \cref{lem:lyapunov}, $\setE{\U}$ is nonnegative and nonincreasing.
   Thus there exists $\UU\ge0$ such that
    \begin{equation}\label{eq:limit-lyapunov}
        \U\to\UU.
    \end{equation}
We show that $\UU=0$.

   By \cref{claim:three}, there exists a subsequence $\setE{x_{k_j}}$ such that
    \begin{equation}\label{eq:optimal-subsequence}
       x_{k_j}\to\xo.
    \end{equation}
We distinguish cases according to the corresponding stepsizes $\setE{\gamma_{k_j+1}}$.

   First suppose that $\setE{\gamma_{k_j+1}}$ has a bounded subsequence.
   Pass to that subsequence and retain the notation.
   Since $\np{\xo}=0$, continuity of $\np{\cdot}$ and the update give
    \begin{align}
        \norm{x_{k_j+1}-\xo}
        \le
        \norm{x_{k_j}-\xo}
        +
        \gamma_{k_j+1}\norm{\np{x_{k_j}}}\to0.
    \end{align}
   By \eqref{eq:optimal-subsequence} and continuity of $\phi\circ\nf$,
    \begin{equation}
       G_{k_j}\to0.
    \end{equation}
    Since $\setE{\bar{\beta}_{k_j+1}}$ is also bounded,
    \begin{equation}
        \gamma_{k_j+1}
        \paren*{1+\bar{\beta}_{k_j+1}}
       G_{k_j}
        \to0.
    \end{equation}
    Furthermore,
    \begin{equation}
       D_f(x_{k_j+1},\xo)\to0,
        \qquad
       D_f(x_{k_j},x_{k_j+1})\to0,
    \end{equation}
    so the definition of the Lyapunov function gives
    \begin{equation}
        \mathcal{E}_{k_j+1}\to0.
    \end{equation}
    Equation~\eqref{eq:limit-lyapunov} therefore implies $\UU=0$.

   Next suppose that $\setE{\gamma_{k_j+1}}$ has no bounded subsequence.
   Then
    \begin{equation}\label{eq:diverging-next-stepsize}
        \gamma_{k_j+1}\to\infty.
    \end{equation}
    The upper bound on the stepsize ratio,
    \begin{equation}
        \gamma_{k+1}
        \le
        \rhm\gamma_k,
    \end{equation}
    together with \eqref{eq:diverging-next-stepsize}, gives
    \begin{equation}\label{eq:diverging-current-stepsize}
        \gamma_{k_j}
        \ge
        \frac{\gamma_{k_j+1}}{\rhm}
        \to\infty.
    \end{equation}

   Nonnegativity and boundedness of the Lyapunov function give
    \begin{equation}
        0
        \le
        \gamma_{k_j}G_{k_j-1}
        \le
        \mathcal{E}_{k_j}
        \le
        \mathcal{E}_1.
    \end{equation}
    Hence \eqref{eq:diverging-current-stepsize} yields
    \begin{equation}\label{eq:previous-gap-zero}
       G_{k_j-1}\to0.
    \end{equation}

   By \cref{claim:one}, $\setE{x_{k_j-1}}$ is bounded.
   For any cluster point $\bar{x}$, \eqref{eq:previous-gap-zero} and continuity of $\phi\circ\nf$ give
    \begin{equation}
        \ph{\bar{x}}=\phi(0).
    \end{equation}
    By \cref{assump:main}, $\bar{x}=\xo$. Thus
    \begin{equation}\label{eq:previous-iterate-convergence}
       x_{k_j-1}\to\xo.
    \end{equation}

   Convexity of $\phi$ gives, for any $y\in\Omega_\phi$,
    \begin{equation}
        \phi(y)-\phi(0)
        \le
        \inpr{\nabla\phi(y)}{y}.
    \end{equation}
    Applying this to $y=\nf(x_{k_j-1})$ and using the DPGD update yields
    \begin{align}
    &\gamma_{k_j}G_{k_j-1} \le \gamma_{k_j} \inpr{\np{x_{k_j-1}}}{\nf(x_{k_j-1})} = \inpr{ x_{k_j-1}-x_{k_j} }{ \nf(x_{k_j-1}) } \to0. \label{eq:weighted-gap-zero}
\end{align}
   The last limit uses \eqref{eq:optimal-subsequence}, \eqref{eq:previous-iterate-convergence}, and $\nf(\xo)=0$.

   Since $\setE{\bar{\beta}_k}$ is bounded and
    \begin{equation}
        0
        \le
        1-\frac{\beta_k}{2\bar{\beta}_k}
        \le1,
    \end{equation}
    \eqref{eq:optimal-subsequence}, \eqref{eq:previous-iterate-convergence}, \eqref{eq:weighted-gap-zero}, and the definition of the Lyapunov function give
    \begin{align}
        \mathcal{E}_{k_j}
        ={}&
       D_f(x_{k_j},\xo)
        +
        \gamma_{k_j}
        \paren*{1+\bar{\beta}_{k_j}}
       G_{k_j-1}+
        \paren*{
            1-\frac{\beta_{k_j}}{2\bar{\beta}_{k_j}}
        }
       D_f(x_{k_j-1},x_{k_j})
        \to0.
    \end{align}
   Thus $\UU=0$ in this case as well.

   In either case,
    \begin{equation}
        \U\to0.
    \end{equation}
    In particular,
    \begin{equation}
        0
        \le
       f(\x)-f(\xo)
        =
       D_f(\x,\xo)
        \le
        \U
        \to0.
    \end{equation}
    Boundedness from \cref{claim:one}, continuity of $f$, and uniqueness of $\xo$ imply
    \begin{equation}
        \x\to\xo.
    \end{equation}
    Continuity of $\nf$ then gives $\nf(\x)\to0$.

\end{proof}

\subsection{Linear convergence}\label[appendix]{app:proof-best-iterate-rate}

\begin{proof}[Proof of \cref{thm:bestiteraterate}]
If a stationary point is reached, it is $x_\star$ by \cref{assump:main}; extending the iterates constantly gives \eqref{eq:linear-last-iterate} after enlarging the constants to cover the finite prefix. We therefore consider an infinite nonstationary sequence.

By \cref{claim:one}, choose a compact convex set $\mathcal C\subset\RR^d$ containing $x_\star$ and every iterate. The set $\nabla f(\mathcal C)$ is a compact subset of the open convex set $\Omega_\phi$, so choose a compact convex set $\mathcal K^*\subset\Omega_\phi$ containing it. In particular, $0=\nabla f(x_\star)\in\mathcal K^*$. Write $\mu_\phi:=\mu_{\mathcal K^*}>0$ from \cref{assump:rate}. By \cref{lem:local-strong-convexity,lem:locallip}, there are $m_f,M_f,L_F>0$ such that

\begin{equation}\label{eq:linear-local-bounds}
    m_fI\preceq\nabla^2f(x)\preceq M_fI,
    \qquad
    \norm{\np{x}-\np{y}}\le L_F\norm{x-y},
    \qquad x,y\in\mathcal C,
\end{equation}

With $\beta_{\max}=(1+\sqrt5)/2$, initialization and \eqref{ineq:beta-bound} give

\begin{equation}\label{eq:linear-ratio-bounds}
    0<\beta_k\le\bar\beta_k\le\beta_{\max},
    \qquad \bar\beta_k>1,
    \qquad \bar\beta_{k+1}^{\,2}=1+\beta_k,
    \qquad k\ge1.
\end{equation}

Strong convexity of $\phi$ on $\mathcal K^*$ and the local Hessian bounds yield

\begin{align}
    &\Delta_\phi(\nabla f(x_k),\nabla f(x_{k-1})) \ge\mu_\phi\norm{\nabla f(x_k)-\nabla f(x_{k-1})}^2, \\
    &\Delta_f(x_k,x_{k-1}) \le\norm{\nabla f(x_k)-\nabla f(x_{k-1})}\norm{x_k-x_{k-1}} \le m_f^{-1}\norm{\nabla f(x_k)-\nabla f(x_{k-1})}^2.
\end{align}

Applying local dual relative smoothness in both orders and the quadratic Bregman bounds on $\mathcal C$ therefore gives a constant $\overline L^{\mathrm d}>0$ such that

\begin{equation}\label{eq:linear-curvature-bounds}
    0<\underline L^{\mathrm d}:=\mu_\phi m_f
    \le\widehat L_k^{\mathrm d}\le\overline L^{\mathrm d},
    \qquad
    \zeta_k\ge\underline\zeta:=m_f/M_f>0.
\end{equation}

We first bound the stepsizes. If $\gamma_k\ge2/\underline L^{\mathrm d}$, nonnegativity of $\mathcal R_{k,2\bar\beta_{k+1}}$ gives

\[
    \mathcal R_{k,2\bar\beta_{k+1}}-(1-\gamma_k\widehat L_k^{\mathrm d})
    \ge\gamma_k\underline L^{\mathrm d}-1
    \ge\tfrac12\gamma_k\underline L^{\mathrm d}.
\]

Thus \eqref{eq:stepsize-rule} implies $\gamma_{k+1}\le1/\underline L^{\mathrm d}$. Otherwise, $\gamma_{k+1}\le\beta_{\max}\gamma_k<2\beta_{\max}/\underline L^{\mathrm d}$. Since $\gamma_1=\gamma_0$,

\begin{equation}\label{eq:linear-gamma-upper}
    \gamma_k\le\overline\gamma
    :=\max\{\gamma_0,2\beta_{\max}/\underline L^{\mathrm d}\},
    \qquad k\ge0.
\end{equation}

Because $\nabla\phi\circ\nabla f$ is bounded on $\mathcal C$, the points $z_k:=x_k-2\bar\beta_{k+1}\gamma_k\np{x_k}$ are bounded. Choose a compact convex set $\mathcal Q$ containing $\mathcal C$ and all $z_k$, and let $M_{\mathcal Q}^f>0$ bound the largest eigenvalue of $\nabla^2f$ on $\mathcal Q$. The update and \eqref{eq:linear-local-bounds} imply

\[
    \norm{\np{x_k}}
    \le\norm{\np{x_{k-1}}}+L_F\norm{x_k-x_{k-1}}
    \le(1+L_F\overline\gamma)\norm{\np{x_{k-1}}}.
\]

Since $T_k(x_{k-1})=x_k$, the point $z_k$ is the evaluation point in \eqref{eq:local-estimates} for $\delta=2\bar\beta_{k+1}$. Using $x_k-x_{k-1}=-\gamma_k\np{x_{k-1}}$ and $\np{x_{k-1}}\ne0$, the quadratic bounds give

\begin{align}
    &\mathcal R_{k,2\bar\beta_{k+1}} =\frac{2D_f(z_k,x_k)}{4\bar\beta_{k+1}^{\,2}\Delta_f(x_k,x_{k-1})} \le\frac{M_{\mathcal Q}^f}{m_f} \frac{\norm{\np{x_k}}^2}{\norm{\np{x_{k-1}}}^2} \le\frac{M_{\mathcal Q}^f}{m_f}(1+L_F\overline\gamma)^2 =:\overline{\mathcal R}. \label{eq:linear-R-upper}
\end{align}

Combining \eqref{eq:linear-curvature-bounds}--\eqref{eq:linear-R-upper} with the stepsize rule, including its division convention, yields

\begin{equation}\label{eq:linear-beta-lower}
    \beta_{k+1}\ge\underline\beta
    :=\min\left\{1,
      \frac{\underline\zeta}{1+\underline\zeta}
      \frac{1}{2\beta_{\max}(\overline{\mathcal R}
                   +\overline\gamma\,\overline L^{\mathrm d})}\right\}>0.
\end{equation}

Moreover, there exists $\gamma_{\rm safe}>0$ such that

\begin{equation}\label{eq:linear-small-step}
    0<\gamma_k\le\gamma_{\rm safe}
    \quad\Longrightarrow\quad
    \beta_{k+1}=\bar\beta_{k+1}.
\end{equation}

Otherwise, there are indices $k_j$ with $\gamma_{k_j}\to0$ and $\beta_{k_j+1}<\bar\beta_{k_j+1}$. By \cref{lem:gammatozero} and \eqref{eq:linear-curvature-bounds},

\[
    [\mathcal R_{k_j,2\bar\beta_{k_j+1}}
       -(1-\gamma_{k_j}\widehat L_{k_j}^{\mathrm d})]_+\to0.
\]

Since $\zeta_{k_j}\ge\underline\zeta>0$ and $\bar\beta_{k_j+1}\le\beta_{\max}$, the second candidate in \eqref{eq:stepsize-rule} tends to $+\infty$, contradicting the strict inequality. Set $\underline\gamma:=\min\{\gamma_1,\underline\beta\gamma_{\rm safe}\}>0$. If $\gamma_k\le\gamma_{\rm safe}$, \eqref{eq:linear-small-step} gives $\gamma_{k+1}\ge\gamma_k$; otherwise, \eqref{eq:linear-beta-lower} gives $\gamma_{k+1}\ge\underline\beta\gamma_{\rm safe}$. Induction and \eqref{eq:linear-gamma-upper} therefore yield

\begin{equation}\label{eq:linear-stepsize-bounds}
    0<\underline\gamma\le\gamma_k\le\overline\gamma<\infty,
    \qquad k\ge1.
\end{equation}

Next we compare the Lyapunov energy with $G_{k-1}$. Since $0$ minimizes $\phi$ and belongs to $\Omega_\phi$, $\nabla\phi(0)=0$ and $\np{x_\star}=0$. Strong convexity on $\mathcal K^*$ and \eqref{eq:linear-local-bounds} give

\begin{equation}\label{eq:linear-G-lower}
    G_j\ge\frac{\mu_\phi}{2}\norm{\nabla f(x_j)}^2
         \ge\frac{\mu_\phi m_f^2}{2}\norm{x_j-x_\star}^2,
    \qquad j\ge0.
\end{equation}

The update gives

\[
    \norm{x_k-x_\star}\le(1+\overline\gamma L_F)\norm{x_{k-1}-x_\star},
    \qquad
    \norm{x_k-x_{k-1}}\le\overline\gamma L_F\norm{x_{k-1}-x_\star}.
\]

Using $D_f(x_k,x_\star)=f(x_k)-f_\star$, $1-\beta_k/(2\bar\beta_k)\in[1/2,1]$, and the upper quadratic bounds in \eqref{eq:convex-lyapunov}, we obtain

\begin{align}
    0\le f(x_k)-f_\star\le\mathcal E_k
    &\le\frac{M_f}{2}\left[(1+\overline\gamma L_F)^2
                          +\overline\gamma^{\,2}L_F^2\right]\norm{x_{k-1}-x_\star}^2\notag\\
    &\quad+\overline\gamma(1+\beta_{\max})G_{k-1}
     \le C_EG_{k-1},
    \label{eq:linear-energy-comparison}
\end{align}

where the finite constant

\[
    C_E:=\overline\gamma(1+\beta_{\max})
      +\frac{M_f}{\mu_\phi m_f^2}
         \left[(1+\overline\gamma L_F)^2
                        +\overline\gamma^{\,2}L_F^2\right]>0
\]

is independent of $k$.

Finally, set $c_k:=1+\bar\beta_k-\beta_{k+1}\bar\beta_{k+1}$. By \eqref{eq:linear-ratio-bounds}, $c_k\ge\bar\beta_k-\beta_k\ge0$, and \cref{lem:lyap} gives

\begin{equation}\label{eq:linear-descent}
    \mathcal E_{k+1}\le\mathcal E_k-\gamma_kc_kG_{k-1}.
\end{equation}

For any $n\ge1$ and integer $N\ge1$, the identity $\gamma_k\beta_{k+1}=\gamma_{k+1}$ gives

\begin{align}
    &\sum_{k=n}^{n+N-1}\gamma_kc_k =\sum_{k=n}^{n+N-1}\gamma_k +\gamma_n\bar\beta_n-\gamma_{n+N}\bar\beta_{n+N} \ge N\underline\gamma-\beta_{\max}\overline\gamma. \label{eq:linear-block-weight}
\end{align}

Fix $N$ large enough that $N\underline\gamma>\beta_{\max}\overline\gamma$. Summing \eqref{eq:linear-descent} and using \eqref{eq:linear-energy-comparison}, $\gamma_kc_k\ge0$, and monotonicity of the energy yields

\begin{align}
    &\mathcal E_n-\mathcal E_{n+N} \ge\sum_{k=n}^{n+N-1}\gamma_kc_kG_{k-1} \ge C_E^{-1}\sum_{k=n}^{n+N-1}\gamma_kc_k\mathcal E_k \ge\frac{N\underline\gamma-\beta_{\max}\overline\gamma}{C_E}\mathcal E_{n+N}.
\end{align}

Thus, with $\kappa:=\bigl(1+(N\underline\gamma-\beta_{\max}\overline\gamma)/C_E\bigr)^{-1}\in(0,1)$ independent of $n$,

\begin{equation}\label{eq:linear-energy-geometric}
    \mathcal E_{n+N}\le \kappa\mathcal E_n,
    \qquad
    \mathcal E_k\le\mathcal E_1\kappa^{\lfloor(k-1)/N\rfloor},
    \qquad n,k\ge1.
\end{equation}

Put $\alpha:=\kappa^{1/(2N)}\in(0,1)$. Since $f(x_k)-f_\star\le\mathcal E_k$ and $\mathcal E_{k+1}\ge\underline\gamma G_k$, \eqref{eq:linear-energy-geometric} and \eqref{eq:linear-G-lower} imply

\[
    f(x_k)-f_\star\le\frac{\mathcal E_1}{\kappa\alpha^2}\alpha^{2k}
      \quad(k\ge1),
    \qquad
    \norm{\nabla f(x_k)}^2
      \le\frac{2\mathcal E_1}{\kappa\underline\gamma\mu_\phi}\alpha^{2k}
      \quad(k\ge0).
\]

Enlarging $C_f$ to cover $k=0$ proves \eqref{eq:linear-last-iterate}, since $\alpha^{2k}\le \alpha^k$.
\end{proof}

\section{Proofs for the Weakly Convex Analysis}\label[appendix]{app:weak-convergence-proofs}
Throughout this appendix, $\eta>0$ and the algorithm uses \eqref{eq:stepsize-rule}.

\subsection{Proof of the weakly convex Lyapunov inequality}\label[appendix]{app:proof-weak-lyapunov}
Induction using the weakly convex stepsize rule gives
\begin{equation}\label{eq:weak-betabar-order}
    0<\beta_k\le\bar\beta_k\le\beta_{\mathrm{cap}}.
\end{equation}

The definitions of $m_k^\eta$ and $\zeta_k^\eta$ give
\begin{equation}\label{eq:weak-eta-absorb}
    \eta\norm{x_k-x_{k-1}}^2
    =
    \frac{\eta}{m_k^\eta}\Delta_{g_\eta}(x_k,x_{k-1}).
\end{equation}
They also give
\begin{equation}\label{eq:weak-delta-zeta}
    \Delta_{g_\eta}(x_k,x_{k-1})
    =
    \frac{1+\zeta_k^\eta}{\zeta_k^\eta}D_{g_\eta}(x_{k-1},x_k).
\end{equation}

\begin{proof}[Proof of \cref{lem:weak-lyapunov}]
   Multiply \eqref{eq:common-gap-identity} in \cref{lem:basic-two-step-identities} by $\gamma_{k+1}/2$ and
   apply \eqref{eq:common-distance-identity} with $\bar x=x_\star$.
   Using \eqref{eq:weak-curvature-rewrite} and $D_\phi\ge0$ gives
    \begin{align}
    &D_f(x_{k+1},x_\star) +\frac32\gamma_{k+1}G_k +D_f(x_k,x_{k+1}) \\
    &\quad \le D_f(x_k,x_\star) +\frac12\gamma_{k+1}G_{k-1} -\frac{\beta_{k+1}}2 \left( 1-\gamma_k \widehat L_k^{\mathrm d,\eta}-\frac{\eta}{m_k^\eta} \right) \Delta_{g_\eta}(x_k,x_{k-1}) +\chi_{k+1}. \label{eq:weak-combined-two-step}
\end{align}

   By the update and the definition of $\chi_{k+1}$,
    \begin{align}
        \chi_{k+1}
        &=
        \inpr{\nabla g_\eta(x_{k+1})-\nabla g_\eta(x_k)}{x_{k+1}-x_k}
        -\eta\norm{x_{k+1}-x_k}^2.
    \end{align}
   As in \eqref{eq:convex-cross-term-bound}, convexity of $g_\eta$ and \eqref{eq:bregman-three-point} give
    \begin{align}
    &\chi_{k+1} \le \frac{\beta_{k+1}}{\bar\beta_{k+1}}D_{g_\eta}(x_k,x_{k+1}) +\frac{\bar\beta_{k+1}\beta_{k+1}}2 \mathcal R^\eta_{k,\bar\beta_{k+1}}\Delta_{g_\eta}(x_k,x_{k-1}) -\eta\norm{x_{k+1}-x_k}^2. \label{eq:weak-cross-term-bound}
\end{align}
   Substituting \eqref{eq:weak-cross-term-bound} into \eqref{eq:weak-combined-two-step} and using
    \begin{equation}
       D_f(x_k,x_{k+1})
        =D_{g_\eta}(x_k,x_{k+1})-\frac{\eta}{2}\norm{x_{k+1}-x_k}^2
    \end{equation}
   yields
    \begin{align}
        &D_f(x_{k+1},x_\star)
        +\frac32\gamma_{k+1}G_k
        +\left(
            1-\frac{\beta_{k+1}}{\bar\beta_{k+1}}
        \right)D_{g_\eta}(x_k,x_{k+1})
        +\frac{\eta}{2}\norm{x_{k+1}-x_k}^2
        \\
        &\quad\le
       D_f(x_k,x_\star)
        +\frac12\gamma_{k+1}G_{k-1}
        \\
        &\qquad
        +\beta_{k+1}
        \left(
            \frac{\bar\beta_{k+1}}2\mathcal R^\eta_{k,\bar\beta_{k+1}}
            -\frac12
            \left(
                1-\gamma_k \widehat L_k^{\mathrm d,\eta}-\frac{\eta}{m_k^\eta}
            \right)
        \right)
        \Delta_{g_\eta}(x_k,x_{k-1}).
        \label{eq:weak-basic-inequality}
    \end{align}

   By \eqref{eq:weak-betabar-order}, the numerator of the second candidate in \eqref{eq:stepsize-rule} is nonnegative. Thus \eqref{eq:stepsize-rule} gives
    \begin{align}
        &\beta_{k+1}
        \left(
            \frac{\bar\beta_{k+1}}2\mathcal R^\eta_{k,\bar\beta_{k+1}}
            -\frac12
            \left(
                1-\gamma_k \widehat L_k^{\mathrm d,\eta}-\frac{\eta}{m_k^\eta}
            \right)
        \right)
        \\
        &\quad\le
        \beta_{k+1}
        \left[
            \frac{\bar\beta_{k+1}}2\mathcal R^\eta_{k,\bar\beta_{k+1}}
            -\frac12
            \left(
                1-\gamma_k \widehat L_k^{\mathrm d,\eta}-\frac{\eta}{m_k^\eta}
            \right)
        \right]_+
        \\
        &\quad\le
        \frac{\zeta_k^\eta}{1+\zeta_k^\eta}
        \left(
            1-\frac{\beta_k}{\bar\beta_k}
        \right)
        +\frac{\eta}{2m_k^\eta}.
        \label{eq:weak-stepsize-absorption}
    \end{align}
   Multiply \eqref{eq:weak-stepsize-absorption} by $\Delta_{g_\eta}(x_k,x_{k-1})$ and use \eqref{eq:weak-eta-absorb} and \eqref{eq:weak-delta-zeta} to obtain
    \begin{align}
        &\beta_{k+1}
        \left(
            \frac{\bar\beta_{k+1}}2\mathcal R^\eta_{k,\bar\beta_{k+1}}
            -\frac12
            \left(
                1-\gamma_k \widehat L_k^{\mathrm d,\eta}-\frac{\eta}{m_k^\eta}
            \right)
        \right)
        \Delta_{g_\eta}(x_k,x_{k-1})
        \\
        &\quad\le
        \left(
            1-\frac{\beta_k}{\bar\beta_k}
        \right)D_{g_\eta}(x_{k-1},x_k)
        +\frac{\eta}{2}\norm{x_k-x_{k-1}}^2.
        \label{eq:weak-coefficient-control}
    \end{align}
   Substituting \eqref{eq:weak-coefficient-control} into \eqref{eq:weak-basic-inequality} and using $\gamma_{k+1}=\beta_{k+1}\gamma_k$ gives
    \begin{align}
        \mathcal{E}_{k+1}^{\mathrm{weak}}
        &\le
        \mathcal{E}_k^{\mathrm{weak}}
        -\frac{\gamma_k}{2}(3-\beta_{k+1})G_{k-1},
    \end{align}
which is \eqref{eq:weak-lyapunov-descent}.
   Finally, $\beta_{k+1}\le\beta_{\mathrm{cap}}<3$ yields
    \begin{equation}\label{eq:weak-lyapunov-descent-uniform}
        \mathcal{E}_{k+1}^{\mathrm{weak}}
        \le
        \mathcal{E}_k^{\mathrm{weak}}
        -\frac{\gamma_k}{2}(3-\beta_{\mathrm{cap}})G_{k-1}.
    \end{equation}
\end{proof}
\subsection{Consequences of the Lyapunov inequality}

By \eqref{eq:weak-nonnegative-gaps} and \eqref{eq:weak-betabar-order}, every term in \eqref{eq:weak-lyapunov} is nonnegative.
As in the convex analysis, the Lyapunov function first gives the following.
\begin{lemma}\label{lem:weak-lyapunov-consequence}
   Under \cref{assump:main} with $\eta>0$ and \cref{def:weak-update-rule}, the following hold.
    \begin{enumerate}
        \item $\setE{\mathcal{E}_k^{\mathrm{weak}}}$ is nonnegative and nonincreasing, hence convergent.
        \item For every $K\ge1$,
        \begin{equation}\label{eq:weak-best-iterate}
            \min_{0\le j\le K-1}G_j
            \le
            \frac{2}{3-\beta_{\mathrm{cap}}}
            \frac{
                \mathcal{E}_1^{\mathrm{weak}}
            }{
                \sum_{k=1}^{K}\gamma_k
            }.
        \end{equation}
    \end{enumerate}
   Furthermore,
    \begin{equation}\label{eq:weak-weighted-summability}
        \sum_{k=1}^{\infty}\gamma_kG_{k-1}<\infty.
    \end{equation}

\end{lemma}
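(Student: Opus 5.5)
The argument mirrors \cref{lem:lyapunov} from the convex analysis, now using the weakly convex descent inequality \eqref{eq:weak-lyapunov-descent-uniform} from \cref{lem:weak-lyapunov}.

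\emph{Part (i).} First I check that every term of $\mathcal E_k^{\mathrm{weak}}$ in \eqref{eq:weak-lyapunov} is nonnegative:
\begin{itemize}
\item $D_f(x_k,x_\star)=f(x_k)-f_\star\ge0$ by \eqref{eq:weak-nonnegative-gaps};
\item $\gamma_kG_{k-1}\ge0$ because $G_{k-1}\ge0$ by \cref{assump:main}(ii);
\item $1-\beta_k/\bar\beta_k\ge0$ by \eqref{eq:weak-betabar-order}, and $D_{g_\eta}\ge0$ because $g_\eta$ is convex;
\item the quadratic term $\frac{\eta}{2}\norm{x_k-x_{k-1}}^2$ is clearly nonnegative.
\end{itemize}
Since $\beta_{\mathrm{cap}}<3$, the descent term in \eqref{eq:weak-lyapunov-descent-uniform} is nonnegative, so $\{\mathcal E_k^{\mathrm{weak}}\}$ is nonincreasing. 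A nonincreasing sequence bounded below by $0$ converges.

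\emph{Summability \eqref{eq:weak-weighted-summability}.} Sum \eqref{eq:weak-lyapunov-descent-uniform} from $k=1$ to $K$ and telescope:
\[
\frac{3-\beta_{\mathrm{cap}}}{2}\sum_{k=1}^K\gamma_kG_{k-1}\le\mathcal E_1^{\mathrm{weak}}-\mathcal E_{K+1}^{\mathrm{weak}}\le\mathcal E_1^{\mathrm{weak}}.
\]
The partial sums are therefore bounded uniformly in $K$. Their terms are nonnegative, so letting $K\to\infty$ gives \eqref{eq:weak-weighted-summability}.

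\emph{Part (ii).} In the same telescoped inequality, bound each $G_{k-1}$ from below by $\min_{0\le j\le K-1}G_j$. This gives
\[
\frac{3-\beta_{\mathrm{cap}}}{2}\Bigl(\min_{0\le j\le K-1}G_j\Bigr)\sum_{k=1}^K\gamma_k\le\mathcal E_1^{\mathrm{weak}}.
\]
Dividing by the positive sum $\sum_{k=1}^K\gamma_k$ yields \eqref{eq:weak-best-iterate}.

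No step here is a genuine obstacle; the only point to check is the nonnegativity of the coefficient $1-\beta_k/\bar\beta_k$. That comes from the induction behind \eqref{eq:weak-betabar-order}: the first candidate in \eqref{eq:stepsize-rule} is $\bar\beta_{k+1}$, so $\beta_{k+1}\le\bar\beta_{k+1}$, and the base case is $\beta_1=1\le\bar\beta_1$. Unlike the convex case, no telescoping of stepsize coefficients is needed, because the decrease coefficient $(3-\beta_{k+1})/2$ is bounded below uniformly.
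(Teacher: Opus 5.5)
Your proposal is correct and follows essentially the same route as the paper. The paper also checks nonnegativity of each term of $\mathcal E_k^{\mathrm{weak}}$ via \eqref{eq:weak-nonnegative-gaps} and \eqref{eq:weak-betabar-order}, telescopes the uniform descent inequality \eqref{eq:weak-lyapunov-descent-uniform} to get both monotone convergence and the summability \eqref{eq:weak-weighted-summability}, and bounds each $G_{k-1}$ below by the running minimum to obtain \eqref{eq:weak-best-iterate}.
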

\begin{proof}
   Equation~\eqref{eq:weak-lyapunov-descent-uniform} implies that $\mathcal{E}_k^{\mathrm{weak}}$ is nonnegative and nonincreasing, and for every $K\ge1$,
    \begin{equation}\label{eq:weak-telescoping}
        \mathcal{E}_{K+1}^{\mathrm{weak}}
        +
        \frac{3-\beta_{\mathrm{cap}}}{2}
        \sum_{k=1}^{K}\gamma_kG_{k-1}
        \le
        \mathcal{E}_1^{\mathrm{weak}}.
    \end{equation}
This proves the first claim; letting $K\to\infty$ gives \eqref{eq:weak-weighted-summability}. Moreover, \eqref{eq:weak-telescoping} and $G_{k-1}\ge\min_{0\le j\le K-1}G_j$ yield
    \begin{equation}
        \frac{3-\beta_{\mathrm{cap}}}{2}
        \paren*{\min_{0\le j\le K-1}G_j}
        \sum_{k=1}^{K}\gamma_k
        \le
        \mathcal{E}_1^{\mathrm{weak}},
    \end{equation}
proving \eqref{eq:weak-best-iterate}.
\end{proof}

\subsection{Local estimates for vanishing stepsizes}
For convex objectives, \cref{lem:gammatozero} analyzes the limit of the local estimate along subsequences whose stepsizes tend to $0$. The contradiction argument in \cref{claim:two} then proves $\sum_k\gamma_k=\infty$.
For weakly convex objectives, we do not rule out stepsizes approaching $0$.
Instead, we show that even if $\gamma_k\to0$, the decrease is slow enough to preserve $\sum_k\gamma_k=\infty$.
Besides local boundedness of $m_k^\eta,\widehat L_k^{\mathrm d,\eta}$, the next lemma quantifies how $\mathcal R^\eta_{k,\bar\beta_{k+1}}$ approaches $1$.

\begin{lemma}[local estimates]\label{lem:weak-local-estimates}
   Under \cref{assump:main} with $\eta>0$, suppose that $\setE{x_k}$ lies in a compact set and $\gamma_k\to0$.
   Then there exist constants $0<\underline m\le\overline m<\infty$, $\overline L^{\mathrm d}<\infty$, $C_{\mathcal R}>0$, and $k_0\in\NN$ such that, for all $k\ge k_0$,
    \begin{equation}\label{eq:weak-m-l-bounds}
        \underline m
        \le
       m_k^\eta
        \le
        \overline m,
        \qquad
        0\le \widehat L_k^{\mathrm d,\eta}\le\overline L^{\mathrm d}
    \end{equation}
   and
    \begin{equation}\label{eq:weak-R-rate}
        \abs*{\mathcal R^\eta_{k,\bar\beta_{k+1}}-1}
        \le
       C_{\mathcal R}\gamma_k.
    \end{equation}

\end{lemma}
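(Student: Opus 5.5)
Throughout, fix a compact convex set $\mathcal K$ that contains all iterates; any extra evaluation points will be shown to stay in a slightly larger compact convex set. Apply \cref{lem:local-strong-convexity} to $h=g_\eta$ on $\mathcal K$. It gives $m_g I\preceq\nabla^2g_\eta\preceq M_g I$ there, and \eqref{eq:local-symmetric-gradient-bounds} then gives $m_g\le m_k^\eta\le M_g$ for every $k$, because $x_k\ne x_{k-1}$. For the dual estimate, \eqref{eq:local-comparison-sym} in \cref{prop:local-regularity} gives $\widehat L_k^{\mathrm d,\eta}\le L_{\mathcal K}$. Nonnegativity of $\widehat L_k^{\mathrm d,\eta}$ follows from convexity of $\phi$ and positivity of $\Delta_{g_\eta}$. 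This settles \eqref{eq:weak-m-l-bounds} for all $k$; the restriction $k\ge k_0$ is needed only for \eqref{eq:weak-R-rate}.

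For \eqref{eq:weak-R-rate}, follow the proof of \cref{lem:gammatozero} quantitatively. Set $u_k:=x_k-x_{k-1}$ and $v_k:=\nabla\phi(\nabla f(x_k))-\nabla\phi(\nabla f(x_{k-1}))$. By \cref{prop:local-regularity}(ii), $F:=\nabla\phi\circ\nabla f$ is Lipschitz with constant $L_F$ and bounded by $M_F$ on $\mathcal K$. Hence $\|v_k\|\le L_F\|u_k\|$, $\|u_k\|\le M_F\gamma_k$, and $T_k(x_k)-T_k(x_{k-1})=u_k-\gamma_kv_k$. Since $\gamma_k\to0$, choose $k_0$ with $\gamma_kL_F\le1$ for $k\ge k_0$. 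Because $\bar\beta_{k+1}\le\beta_{\mathrm{cap}}$, the point $x_k+\bar\beta_{k+1}(u_k-\gamma_kv_k)$ then lies within $2\beta_{\mathrm{cap}}M_F\gamma_k$ of $x_k$, so all such points stay in a compact convex $\mathcal Q\supset\mathcal K$.

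Next write both Bregman quantities with Taylor's integral formula. This gives
\[
\mathcal R^\eta_{k,\bar\beta_{k+1}}=\frac{\int_0^1 2(1-t)\langle\nabla^2g_\eta(x_k+t\bar\beta_{k+1}w_k)w_k,w_k\rangle\,dt}{\int_0^1\langle\nabla^2g_\eta(x_{k-1}+tu_k)u_k,u_k\rangle\,dt},\qquad w_k:=u_k-\gamma_kv_k .
\]
The factor $\delta^2=\bar\beta_{k+1}^2$ cancels by design. Since $\int_0^12(1-t)\,dt=1$, the difference $\mathcal R-1$ splits into two pieces. The first is a Hessian-variation term, bounded by $\frac{1}{m_{\mathcal Q}}\sup\|\nabla^2g_\eta(\xi)-\nabla^2g_\eta(\tilde\xi)\|$, with both arguments within $\|u_k\|+\beta_{\mathrm{cap}}\|w_k\|\le(1+2\beta_{\mathrm{cap}})M_F\gamma_k$ of each other. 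The second is a cross term from replacing $w_k$ by $u_k$, bounded by $(2\gamma_kL_F+\gamma_k^2L_F^2)M_{\mathcal Q}/m_{\mathcal Q}$ after dividing by $\|u_k\|^2$.

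The main obstacle is that the first piece must be $O(\gamma_k)$ and not merely $o(1)$. Mere uniform continuity, as used in \cref{lem:gammatozero}, is not enough here. This is exactly where \cref{assump:main}(iii) enters: $\nabla^2f$, and hence $\nabla^2g_\eta$, is locally Lipschitz, so it is Lipschitz with some constant $L_H$ on the compact $\mathcal Q$ by \citet[Theorem~9.2]{Rockafellar1997-de}. The first piece is then at most $L_H(1+2\beta_{\mathrm{cap}})M_F\gamma_k/m_{\mathcal Q}$. Combining the two pieces with $\gamma_k\le1/L_F$ gives \eqref{eq:weak-R-rate} with
\[
C_{\mathcal R}:=\frac{L_H(1+2\beta_{\mathrm{cap}})M_F+3L_FM_{\mathcal Q}}{m_{\mathcal Q}} .
\]
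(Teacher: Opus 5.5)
Your proof is correct and follows essentially the same route as the paper. You Taylor-expand both Bregman quantities on a slightly enlarged compact convex set, then split $\mathcal R^\eta_{k,\bar\beta_{k+1}}-1$ into a Hessian-variation term, made $O(\gamma_k)$ by local Lipschitz continuity of $\nabla^2 g_\eta$, and cross terms of order $\gamma_kL_F$. The only cosmetic differences are that you use the integral form of Taylor's formula where the paper picks mean-value points $\xi_k,\widetilde\xi_k$, and that you absorb the $\gamma_k^2L_F^2$ term via $\gamma_kL_F\le1$ rather than $\gamma_k\le1$.
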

\begin{proof}
   Let $\mathcal C$ be the compact convex hull of a compact set containing the iterates.
   Applying \cref{lem:local-strong-convexity} to $h=g_\eta$ yields constants $\underline m,\overline m>0$ such that,
   by \eqref{eq:local-symmetric-gradient-bounds},
    \begin{equation}\label{eq:weak-m-exact-bound}
       \underline m
        \le
       m_k^\eta
        =
        \frac{\Delta_{g_\eta}(x_k,x_{k-1})}{\norm{x_k-x_{k-1}}^2}
        \le
       \overline m.
    \end{equation}
Applying \eqref{eq:local-comparison-sym} on $\mathcal C$ gives $\overline L^{\mathrm d}>0$ such that
    \begin{equation}\label{eq:weak-l-exact-bound}
        0\le \widehat L_k^{\mathrm d,\eta}\le \overline L^{\mathrm d}.
    \end{equation}

   It remains to prove \eqref{eq:weak-R-rate}. Set
    \begin{equation}
       u_k:=x_k-x_{k-1},
        \qquad
       v_k:=\np{x_k}-\np{x_{k-1}}.
    \end{equation}
By \cref{prop:local-regularity}(ii), $\nabla\phi\circ\nabla f$ is Lipschitz continuous and bounded on $\mathcal C$.
   Let $L_F>0$ be its Lipschitz constant and $M_F>0$ a bound on its norm on $\mathcal C$. The update gives
    \begin{equation}\label{eq:weak-preconditioned-local-bounds}
        \norm{v_k}\le L_F\norm{u_k},
        \qquad
        \norm{u_k}
        =\gamma_k\norm{\np{x_{k-1}}}
        \le M_F\gamma_k.
    \end{equation}
   The definition of $\bar\beta_{k+1}$ in the weakly convex setting also gives
    \begin{equation}\label{eq:weak-betabar-exact-bound}
        1\le\bar\beta_{k+1}\le\beta_{\mathrm{cap}},
        \qquad
        0\le\bar\beta_{k+1}-1\le s\gamma_k.
    \end{equation}
   Since $\gamma_k\to0$, we may assume $\gamma_kL_F\le1$ for sufficiently large $k$. Hence
    \begin{equation}\label{eq:weak-T-bound}
        \norm{T_k(x_k)-T_k(x_{k-1})}
        =\norm{u_k-\gamma_kv_k}
        \le2M_F\gamma_k.
    \end{equation}

   Setting $z_k:=x_k+\bar\beta_{k+1}(T_k(x_k)-T_k(x_{k-1}))$, \eqref{eq:weak-betabar-exact-bound} and \eqref{eq:weak-T-bound} give
    \begin{equation}
        \norm{z_k-x_k}\le2\beta_{\mathrm{cap}} M_F\gamma_k.
    \end{equation}
   Thus there is a compact convex set $\mathcal K$ containing $\mathcal C$ and all $z_k$ for sufficiently large $k$.
   Since $g_\eta$ is $C^2$ with a locally Lipschitz continuous Hessian, \cref{lem:local-strong-convexity} gives $m_g,M_g,L_g>0$ such that, for all $x,y\in\mathcal K$,
    \begin{equation}\label{eq:weak-local-regularity-bounds}
       m_gI\preceq\nabla^2g_\eta(x)\preceq M_gI,
        \qquad
        \norm{\nabla^2g_\eta(x)-\nabla^2g_\eta(y)}
        \le L_g\norm{x-y}.
    \end{equation}

   By Taylor's theorem, choose $\xi_k$ on the segment joining $x_k,z_k$ and $\widetilde\xi_k$ on the segment joining $x_{k-1},x_k$ such that
    \begin{equation}
        \mathcal R^\eta_{k,\bar\beta_{k+1}}
        =
        \frac{
            \inpr{\nabla^2g_\eta(\xi_k)(u_k-\gamma_kv_k)}{u_k-\gamma_kv_k}
        }{
            \inpr{\nabla^2g_\eta(\widetilde\xi_k)u_k}{u_k}
        }.
    \end{equation}
Expanding the Hessian quotient for $g_\eta$ as after \eqref{eq:ratiohess}, and using \eqref{eq:weak-preconditioned-local-bounds} and \eqref{eq:weak-local-regularity-bounds}, gives
    \begin{align}
        \abs*{\mathcal R^\eta_{k,\bar\beta_{k+1}}-1}
        &\le
        \frac{1}{m_g}
        \norm{\nabla^2g_\eta(\xi_k)-\nabla^2g_\eta(\widetilde\xi_k)}
        +2\gamma_k\frac{M_gL_F}{m_g}
        +\gamma_k^2\frac{M_gL_F^2}{m_g}.
        \label{eq:weak-R-ratio-bound}
    \end{align}
   The locations of $\xi_k,\widetilde\xi_k$ and \eqref{eq:weak-preconditioned-local-bounds}, \eqref{eq:weak-betabar-exact-bound}, and \eqref{eq:weak-T-bound} imply
    \begin{align}
    &\norm{\xi_k-\widetilde\xi_k} \le \bar\beta_{k+1}\norm{T_k(x_k)-T_k(x_{k-1})} +\norm{u_k} \le (2\beta_{\mathrm{cap}}+1)M_F\gamma_k. \label{eq:weak-xi-eta-bound}
\end{align}
   Hence \eqref{eq:weak-local-regularity-bounds} yields
    \begin{equation}
        \norm{\nabla^2g_\eta(\xi_k)-\nabla^2g_\eta(\widetilde\xi_k)}
        \le
       L_g(2\beta_{\mathrm{cap}}+1)M_F\gamma_k.
    \end{equation}
   Increase $k_0$ so that $\gamma_k\le1$. Equation~\eqref{eq:weak-R-ratio-bound} then gives
    \begin{equation}
        \abs*{\mathcal R^\eta_{k,\bar\beta_{k+1}}-1}
        \le C_{\mathcal R}\gamma_k,
    \end{equation}
   with
    \begin{equation}\label{eq:weak-C-R}
       C_{\mathcal R}
        :=
        \frac{
           L_g(2\beta_{\mathrm{cap}}+1)M_F
            +2M_gL_F
            +M_gL_F^2
        }{m_g}.
    \end{equation}
This proves the result.
\end{proof}

\subsection{Iterate boundedness and nonsummability of stepsizes}
We next prove the counterpart of \cref{prop:claims} needed for the weakly convex analysis.
\begin{proposition}\label{prop:weak-convergence-preparation}
   Under \cref{assump:main} with $\eta>0$ and \cref{def:weak-update-rule}, the following hold.
    \begin{enumerate}
        \item
       There exists a compact set $\mathcal C\subset\RR^d$ containing $\setE{x_k}$.
        \label[proposition]{claim:weak-one}
        \item\label[proposition]{claim:weak-two}
        \begin{equation}\label{eq:weak-sum-gamma}
            \sum_{k=1}^{\infty}\gamma_k=\infty.
        \end{equation}

        \item\label[proposition]{claim:weak-three}
        \begin{equation}\label{eq:weak-G-liminf}
            \liminf_{k\to\infty}G_k=0.
        \end{equation}

    \end{enumerate}
\end{proposition}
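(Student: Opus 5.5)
For (i), I would use \cref{lem:weak-lyapunov-consequence}(i): $\mathcal E_k^{\mathrm{weak}}$ is nonincreasing and every term is nonnegative. This gives $f(x_k)-f(x_\star)=D_f(x_k,x_\star)\le\mathcal E_1^{\mathrm{weak}}$ for all $k\ge1$. Hence $x_k$ lies in the sublevel set $\{x: f(x)\le f(x_\star)+\mathcal E_1^{\mathrm{weak}}\}$, which is compact because $f$ is level bounded and continuous. Adjoining $x_0$ gives the compact set $\mathcal C$.

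For (ii), I split into two cases.

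\emph{Case $\gamma_k\not\to0$.} Then $\gamma_k\ge\epsilon>0$ along infinitely many indices, and $\sum_k\gamma_k=\infty$ follows at once.

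\emph{Case $\gamma_k\to0$.} Here I apply \cref{lem:weak-local-estimates} on $\mathcal C$ to get $\underline m\le m_k^\eta\le\overline m$, $\widehat L_k^{\mathrm d,\eta}\le\overline L^{\mathrm d}$ and $|\mathcal R^\eta_{k,\bar\beta_{k+1}}-1|\le C_{\mathcal R}\gamma_k$ for $k\ge k_0$. \cref{lem:local-strong-convexity} applied to $g_\eta$ also gives $\zeta_k^\eta\ge\underline\zeta>0$.

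I then lower-bound the second candidate in \eqref{eq:stepsize-rule}. Since $\bar\beta_{k+1}\le1+s\gamma_k$, the denominator bracket satisfies
\[
\bar\beta_{k+1}\mathcal R^\eta_{k,\bar\beta_{k+1}}-1+\gamma_k\widehat L_k^{\mathrm d,\eta}+\eta/m_k^\eta\le C'\gamma_k+\eta/m_k^\eta,
\]
where $C'=s(1+C_{\mathcal R})+C_{\mathcal R}+\overline L^{\mathrm d}$ (using $\gamma_k\le1$).

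For the numerator, $2(1-\beta_k/\bar\beta_k)\ge0$, so it is at least $\eta(1+\zeta)/(m\zeta)$. Multiplying by $\zeta/(1+\zeta)$, the second candidate is therefore at least
\[
\frac{\eta/m}{\eta/m+C'\gamma_k}\ \ge\ 1-\frac{C'\gamma_k}{\eta/\overline m}=1-C\gamma_k,\qquad C:=C'\overline m/\eta .
\]
This cancellation of the $\eta/m_k^\eta$ terms is the key point, and the step I expect to need the most care: the ratio is bounded below by $1-C\gamma_k$ rather than merely by a positive constant.

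Since also $\bar\beta_{k+1}\ge1$, I conclude $\beta_{k+1}\ge1-C\gamma_k$, i.e.\ $\gamma_{k+1}\ge\gamma_k(1-C\gamma_k)$ for large $k$. To see that this forces divergence of the sum, suppose $\sum_k\gamma_k<\infty$. Then $\prod_k(1-C\gamma_k)>0$, since $\log(1-C\gamma_k)\ge-2C\gamma_k$ once $C\gamma_k\le1/2$. It follows that $\gamma_k\ge c\,\gamma_{k_1}>0$ for all large $k$, contradicting $\gamma_k\to0$. So $\sum_k\gamma_k=\infty$ in both cases.

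For (iii), I combine \eqref{eq:weak-weighted-summability}, $\sum_k\gamma_kG_{k-1}<\infty$, with (ii). If $\liminf_kG_k>0$, then $G_{k-1}\ge\epsilon$ eventually, and so $\sum_k\gamma_kG_{k-1}\ge\epsilon\sum_k\gamma_k=\infty$, a contradiction. Hence $\liminf_kG_k=0$.
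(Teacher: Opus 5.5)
Your proposal is correct and follows the paper's proof essentially step for step: the sublevel-set argument for (i), then for (ii) the case split on $\gamma_k\to0$ with the cancellation of the $\eta/m_k^\eta$ terms giving $\beta_{k+1}\ge1-C\gamma_k$, and the summability contradiction for (iii). The differences are minor: you conclude $\sum_k\gamma_k=\infty$ by contradiction, via positivity of $\prod_k(1-C\gamma_k)$, whereas the paper passes to reciprocals, $1/\gamma_{k+1}\le1/\gamma_k+2C_0$, to obtain the explicit harmonic lower bound $\gamma_k\ge(\gamma_{k_0}^{-1}+2C_0(k-k_0))^{-1}$; also, your lower bound on $\zeta_k^\eta$ is not needed, since $\zeta_k^\eta$ cancels.
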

\begin{proof}
    (i)
   Use the sublevel-set argument from \cref{claim:one}. By \cref{lem:weak-lyapunov-consequence},
    \begin{equation}
       f(x_k)-f(x_\star)
        =D_f(x_k,x_\star)
        \le\mathcal{E}_k^{\mathrm{weak}}
        \le\mathcal{E}_1^{\mathrm{weak}},
        \qquad k\ge1.
    \end{equation}
   Hence $\setE{x_k}_{k\ge1}$ lies in a single sublevel set of $f$. This set is compact by level boundedness in \cref{assump:main}(i), and adjoining $x_0$ preserves compactness, proving (i).

    (ii) If $\gamma_k\not\to0$, there exist $\varepsilon>0$ and infinitely many indices with $\gamma_k\ge\varepsilon$, so \eqref{eq:weak-sum-gamma} follows immediately.
   Suppose therefore that $\gamma_k\to0$.
   By (i) and \cref{lem:weak-local-estimates}, there exist $\overline m,\overline L^{\mathrm d},C_{\mathcal R}>0$ and $k_0$ such that, for all $k\ge k_0$,
    \begin{equation}
       m_k^\eta\le\overline m,
        \qquad
        \widehat L_k^{\mathrm d,\eta}\le\overline L^{\mathrm d},
        \qquad
        \abs*{\mathcal R^\eta_{k,\bar\beta_{k+1}}-1}
        \le C_{\mathcal R}\gamma_k.
    \end{equation}
   Equation~\eqref{eq:weak-betabar-exact-bound} also gives
    \begin{align}
    &\abs*{ \bar\beta_{k+1}\mathcal R^\eta_{k,\bar\beta_{k+1}}-1 } \le \bar\beta_{k+1} \abs*{\mathcal R^\eta_{k,\bar\beta_{k+1}}-1} + \abs*{\bar\beta_{k+1}-1} \le (\beta_{\mathrm{cap}} C_{\mathcal R}+s)\gamma_k. \label{eq:weak-betabar-R-bound}
\end{align}

   Denote half the expression inside $[\cdot]_+$ in the denominator of the second candidate in \eqref{eq:stepsize-rule} by
    \begin{equation}\label{eq:weak-A-k}
       A_k
        :=
        \frac{\bar\beta_{k+1}}2\mathcal R^\eta_{k,\bar\beta_{k+1}}
        -
        \frac12\left(
            1-\gamma_k\widehat L_k^{\mathrm d,\eta}-\frac{\eta}{m_k^\eta}
        \right).
    \end{equation}
By the definition of $A_k$,
    \begin{equation}
       A_k-\frac{\eta}{2m_k^\eta}
        =
        \frac12
        \left(
            \bar\beta_{k+1}\mathcal R^\eta_{k,\bar\beta_{k+1}}-1
        \right)
        +
        \frac12\gamma_k\widehat L_k^{\mathrm d,\eta}.
    \end{equation}
   Thus \eqref{eq:weak-betabar-R-bound} yields
    \begin{equation}\label{eq:weak-A-a-bound}
        \abs*{A_k-\frac{\eta}{2m_k^\eta}}
        \le \frac{\eta}{2\overline m}C_0\gamma_k
        \le \frac{\eta}{2m_k^\eta}C_0\gamma_k,
        \qquad
       C_0:=\frac{\overline m}{\eta}\bigl(\beta_{\mathrm{cap}} C_{\mathcal R}+s+\overline L^{\mathrm d}\bigr).
    \end{equation}
   Since $\gamma_k\to0$, increase $k_0$ so that $C_0\gamma_k\le1/2$. Then
    \begin{equation}\label{eq:weak-A-positive}
       0<\frac{\eta}{2m_k^\eta}(1-C_0\gamma_k)
       \le A_k\le\frac{\eta}{2m_k^\eta}(1+C_0\gamma_k).
    \end{equation}

   Using $1-\beta_k/\bar\beta_k\ge0$ from \eqref{eq:weak-betabar-order}, the second candidate in \eqref{eq:stepsize-rule} satisfies
    \begin{align}
    &\frac{ \displaystyle \frac{\zeta_k^\eta}{1+\zeta_k^\eta} \left( 1-\frac{\beta_k}{\bar\beta_k} \right) +\frac{\eta}{2m_k^\eta} }{ A_k }
    \ge \frac{1}{1+C_0\gamma_k}\ge1-C_0\gamma_k. \label{eq:weak-second-candidate-lower}
\end{align}
   The last inequality uses $1/(1+t)\ge1-t$ for $t\ge0$.
   Since \eqref{eq:weak-betabar-exact-bound} gives $\bar\beta_{k+1}\ge1$, \eqref{eq:stepsize-rule} and \eqref{eq:weak-second-candidate-lower} imply
    \begin{equation}\label{eq:weak-beta-lower-local}
        \beta_{k+1}\ge1-C_0\gamma_k.
    \end{equation}

   Since $C_0\gamma_k\le1/2$, we obtain
    \begin{equation}\label{eq:weak-gamma-recursion}
        \gamma_{k+1}
        \ge
        \gamma_k(1-C_0\gamma_k)>0.
    \end{equation}
   Hence
    \begin{align}
    &\frac1{\gamma_{k+1}} \le \frac1{\gamma_k(1-C_0\gamma_k)} \le \frac1{\gamma_k}+2C_0, \label{eq:weak-reciprocal-recursion}
\end{align}
   where the last inequality uses $(1-t)^{-1}\le1+2t$ for $0\le t\le1/2$.
   Summing \eqref{eq:weak-reciprocal-recursion} from $k_0$ to $k-1$ gives
    \begin{equation}\label{eq:weak-harmonic-lower}
        \gamma_k
        \ge
        \frac{1}{
            \gamma_{k_0}^{-1}
            +2C_0(k-k_0)
        },
        \qquad k\ge k_0.
    \end{equation}
   The series on the right diverges as a harmonic series, proving \eqref{eq:weak-sum-gamma}.

    (iii)
   Use \eqref{eq:weak-weighted-summability} from
    \cref{lem:weak-lyapunov-consequence} and
    \cref{claim:weak-two}.
   If $\liminf_{k\to\infty}G_k>0$, there exist $\varepsilon>0$ and $k_1$ such that $G_{k-1}\ge\varepsilon$ for all $k\ge k_1$.
   Therefore,
    \begin{equation}
        \sum_{k=k_1}^{\infty}\gamma_kG_{k-1}
        \ge
        \varepsilon\sum_{k=k_1}^{\infty}\gamma_k
        =
        \infty
    \end{equation}
   contradicting \eqref{eq:weak-weighted-summability}.
   This proves \eqref{eq:weak-G-liminf}.
\end{proof}

Finally, we prove stationarity along the entire sequence.
For convex objectives, convergence of the Lyapunov function gave $x_k\to x_\star$. For weakly convex objectives, convergence to a global minimizer cannot generally be expected.
We instead establish $\phi(\nabla f(x_k))\to\phi(0)$ and hence $\nabla f(x_k)\to0$.

\subsection{Convergence to stationarity}\label[appendix]{app:proof-weak-stationarity}
\begin{proof}[Proof of \cref{thm:weak-stationarity}]
   Let $\mathcal C$ be the compact set in \cref{claim:weak-one}.
   By \cref{assump:main} with $\eta>0$, $\phi\circ\nabla f\in C^1(\RR^d)$,
    so it is Lipschitz continuous on $\operatorname{conv}(\mathcal C)$.
   Since $\np{\cdot}$ is bounded on $\mathcal C$, there exist $L_P,M_{\mathcal C}>0$ such that
    \begin{equation}
        \abs*{
            \phi(\nabla f(x))-\phi(\nabla f(y))
        }
        \le L_P\norm{x-y},
        \qquad
        \norm{\np{x}}\le M_{\mathcal C},
        \qquad
       x,y\in\mathcal C.
    \end{equation}

   The update in
    \cref{def:weak-update-rule} gives
    \begin{align}
    &\abs*{G_{k+1}-G_k} \le L_P\norm{x_{k+1}-x_k} = L_P\gamma_{k+1}\norm{\np{x_k}} \le L_PM_{\mathcal C}\gamma_{k+1}. \label{eq:weak-G-increment}
\end{align}

   By \cref{claim:weak-three}, $\liminf_{k\to\infty}G_k=0$.
   Suppose that $G_k\not\to0$.
   Then, for some $\varepsilon>0$, we can choose infinitely many pairwise disjoint intervals $[a_j,b_j]$ such that $G_{a_j}\ge\varepsilon$ and $b_j>a_j$ is the first subsequent index with $G_{b_j}\le\varepsilon/2$.
   For $a_j\le k<b_j$, $G_k>\varepsilon/2$, and \eqref{eq:weak-G-increment} gives
    \begin{align}
    &\frac\varepsilon2 \le G_{a_j}-G_{b_j} \le \sum_{k=a_j}^{b_j-1} \abs*{G_{k+1}-G_k} \le L_PM_{\mathcal C} \sum_{k=a_j}^{b_j-1}\gamma_{k+1}.
\end{align}
   Hence
    \begin{align}
    &\sum_{k=a_j+1}^{b_j} \gamma_kG_{k-1} \ge \frac\varepsilon2 \sum_{k=a_j+1}^{b_j}\gamma_k \ge \frac{\varepsilon^2}{4L_PM_{\mathcal C}}.
\end{align}
   Each disjoint interval contributes a fixed positive amount to the left-hand side of \eqref{eq:weak-weighted-summability}, contradicting \eqref{eq:weak-weighted-summability}.
   Thus $G_k\to0$.

   Finally, \cref{claim:weak-one} and continuity of $\nabla f$ imply that $\setE{\nabla f(x_k)}$ lies in a compact set.
   For any convergent subsequence $\nabla f(x_{k_j})\to y$, $G_k\to0$ and continuity of $\phi$ give
    \begin{equation}
        \phi(y)=\phi(0).
    \end{equation}

   By \cref{assump:main}(ii), $y=0$.
   Every cluster point of $\setE{\nabla f(x_k)}$ is therefore $0$, proving \eqref{eq:weak-gradient-stationarity}.
\end{proof}

\section{Preconditioner Design: Properties, Proofs, and Examples}\label[appendix]{app:preconditioner-design-details}
We verify the properties of the construction in \cref{sec:preconditioner-design}, derive the exponents for the principal target models, and collect the resulting preconditioners and their relation to existing methods in \cref{tab:preconditioner-recipes-full}.

\subsection{\texorpdfstring{Proof of \cref{prop:power-interpolation-preconditioner}}{Proof of the interpolation preconditioner proposition}}\label[appendix]{app:preconditioner-proofs}
\begin{proof}[Proof of \cref{prop:power-interpolation-preconditioner}]
Fix $a,b$ and suppress these parameters in the notation. Differentiating \eqref{eq:normal-construction}, with $\psi=\psi_{a,b}$ defined in \eqref{eq:power-interpolation}, gives, for $\varrho>0$,
\begin{align}
   \nu_{\mathrm{N}}''(\varrho)
    &=\psi(\varrho),
    &
    \frac{\nu_{\mathrm{N}}'(\varrho)}{\varrho}
    &=\frac{1}{\varrho}\int_0^\varrho\psi(s)\,ds,
    \label{eq:normal-curvatures}
    \\
    \frac{\nu_{\mathrm{T}}'(\varrho)}{\varrho}
    &=\psi(\varrho),
    &
   \nu_{\mathrm{T}}''(\varrho)
    &=\psi(\varrho)
    \frac{1+(a+1)\varrho^b}{1+\varrho^b}.
    \label{eq:tangential-curvatures}
\end{align}
For either construction allowed in the proposition, write $\nu=\nu_{\mathrm{N}}$ or $\nu=\nu_{\mathrm{T}}$ and $\phi(y)=\nu(\norm{y})$. Since $\psi>0$, $\psi(0)=1$, and $a\ge-1$ in the tangential case, both $\nu''(\varrho)$ and $\nu'(\varrho)/\varrho$ are positive and tend to $1$ as $\varrho\to0$. Also $\nu(0)=\nu'(0)=0$, so $\nu(\varrho)=\varrho^2/2+o(\varrho^2)$ and $\nabla\phi(0)=0$. For $y\ne0$ and $\varrho=\norm{y}$,
\begin{equation}\label{eq:normal-hessian}
    \nabla\phi(y)=\frac{\nu'(\varrho)}{\varrho}y,
    \qquad
    \nabla^2\phi(y)
    =\nu''(\varrho)\frac{yy^\top}{\varrho^2}
    +\frac{\nu'(\varrho)}{\varrho}
    \paren*{I-\frac{yy^\top}{\varrho^2}}.
\end{equation}
Thus $\nabla\phi(y)=y+o(\norm{y})$, proving $\nabla^2\phi(0)=I$, and the Hessian formula shows that $\nabla^2\phi(y)\to I$ as $y\to0$. Consequently, $\phi\in C^2(\RR^d)$ and $\nabla^2\phi\succ0$ everywhere. Since $\nu'(\varrho)>0$ for $\varrho>0$, its unique minimizer is $0$. Moreover, $\phi$ is finite and convex on $\RR^d$, so $\Omega_\phi=\RR^d$ contains $\nabla f(\RR^d)$ and $\phi\in\mathcal P$. Its $C^2$ regularity gives local Lipschitz continuity of $\nabla\phi$, proving (i). Combined with the assumed properties of $f$, this also proves (iii).

For (ii), suppose that $\eta=0$ and \cref{assump:main}(i) holds, and fix a nonempty compact set $\mathcal K\subset\RR^d$. Choose compact convex sets $\mathcal C\supset\mathcal K$ and $\mathcal K^*\supset\nabla f(\mathcal C)$. Applying \cref{lem:local-strong-convexity} to $f$ on $\mathcal C$ and $\phi$ on $\mathcal K^*$ gives constants $m_f,M_f,M_\phi>0$ such that, for all $x,y\in\mathcal K$,
\begin{align}
    &D_\phi(\nabla f(x),\nabla f(y))
    \le \frac{M_\phi}{2}\norm{\nabla f(x)-\nabla f(y)}^2
    \le \frac{M_\phi M_f^2}{2}\norm{x-y}^2
    \le \frac{M_\phi M_f^2}{m_f}D_f(y,x).
    \label{eq:designed-preconditioner-local-drs}
\end{align}
This proves \cref{assump:main}(iii), and hence \cref{assump:main}. The same lemma applied to $\phi$ on every nonempty compact convex subset of $\Omega_\phi=\RR^d$ proves \cref{assump:rate}.
\end{proof}

\subsection{\texorpdfstring{Target models and the exponent $a$}{Target models and the exponent a}}\label[appendix]{subsubsec:principal-targets}
For the power-law target $\tau(r)=r^p/p$ with $p>1$, set $q=p/(p-1)$. Since $\varrho=\tau'(r)=r^{p-1}$, both inverse curvatures are proportional to $\varrho^{q-2}$, giving $a=q-2=(2-p)/(p-1)$. These targets cover $a>-1$, with growth exponent $p=(a+2)/(a+1)$.

For the exponential target $\tau(r)=e^r$, we have $\varrho=e^r$ and
\begin{equation}\label{eq:exponential-curvatures}
    h_{\mathrm{N}}(\varrho)=\frac{1}{\varrho},
    \qquad h_{\mathrm{T}}(\varrho)=\frac{\log\varrho}{\varrho}.
\end{equation}
Thus the normal inverse curvature is a power law with $a=-1$. For the tangential construction, we also choose $a=-1$, retaining the factor $\varrho^{-1}$ and omitting $\log\varrho$. The exponent $a$ reflects the target's inverse curvature, while $b>0$ controls the interpolation with curvature $1$ near the origin.

\subsection{Representative preconditioners and existing methods}\label[appendix]{subsubsec:representative-b12}\label[appendix]{app:preconditioner-design-supplement}
\cref{tab:preconditioner-recipes-full} gives the full set of examples, including the additional power-law variants omitted from \cref{tab:preconditioner-recipes}. These illustrate the range of the construction; the empirical comparison of four preconditioners is given in \cref{app:numerical-additional-results}.

\begin{table}[!htb]
\centering
\setlength{\tabcolsep}{5pt}
\renewcommand{\arraystretch}{1.15}
\caption{Preconditioning functions and related methods, where $q=p/(p-1)>1$.}
\label{tab:preconditioner-recipes-full}
\begin{tabularx}{\linewidth}{@{}>{\raggedright\arraybackslash}p{0.55\linewidth}>{\raggedright\arraybackslash\hyphenpenalty=10000}X@{}}
\toprule
Preconditioning function & Relation \\
\midrule
$\displaystyle\phi_{\mathrm{N}}^{0,b}(y)=\phi_{\mathrm{T}}^{0,b}(y)=\frac{\norm{y}^2}{2}\quad(b>0)$ & GD \\
\addlinespace[7pt]
$\displaystyle\phi_{\mathrm{N}}^{q-2,1}(y)=\frac{(1+\norm{y})^q-1-q\norm{y}}{q(q-1)}$ & New variant \\
\addlinespace[7pt]
$\displaystyle\phi_{\mathrm{N}}^{q-2,2}(y)=\int_0^{\norm{y}}(\norm{y}-s)(1+s^2)^{(q-2)/2}\,ds$ & New variant \\
\addlinespace[7pt]
$\displaystyle\phi_{\mathrm{T}}^{q-2,1}(y)=\frac{(1+\norm{y})^q-1}{q}-\frac{(1+\norm{y})^{q-1}-1}{q-1}$ & New variant \\
\addlinespace[7pt]
$\displaystyle\phi_{\mathrm{T}}^{q-2,2}(y)=\frac{(1+\norm{y}^2)^{q/2}-1}{q}$ & $p$-norm DPGD~\citep{maddison2021} \\
\addlinespace[7pt]
$\displaystyle\phi_{\mathrm{N}}^{-1,1}(y)=(1+\norm{y})\log(1+\norm{y})-\norm{y}$ & \citet[Table~1]{oikonomidis2025nonlinearly} \\
\addlinespace[7pt]
$\displaystyle\phi_{\mathrm{N}}^{-1,2}(y)=\norm{y}\operatorname{arsinh}(\norm{y})-\sqrt{1+\norm{y}^2}+1$ & HGD~\citep{oikonomidis2025nonlinearly,oikonomidis2026nonlinearly} \\
\addlinespace[7pt]
$\displaystyle\phi_{\mathrm{T}}^{-1,1}(y)=\phi_{\mathrm{N}}^{-2,1}(y)=\norm{y}-\log(1+\norm{y})$ & NGD; exponential-penalty DPGD~\citep{Zhang2020Why,maddison2021} \\
\addlinespace[7pt]
$\displaystyle\phi_{\mathrm{T}}^{-1,2}(y)=\sqrt{1+\norm{y}^2}-1$ & AdaGrad-type~\citep{adagrad11} \\
\addlinespace[7pt]
$\displaystyle\lim_{b\to\infty}\phi_{\mathrm{T}}^{-1,b}(y)=\begin{cases}\norm{y}^2/2,&\norm{y}\le1,\\\norm{y}-1/2,&\norm{y}>1\end{cases}$ & Gradient clipping~\citep{Zhang2020Why} \\
\bottomrule
\end{tabularx}
\end{table}

For the power-law target $\tau(r)=r^p/p$, the normal construction with $b=2$ retains a one-dimensional integral for general $q$; the other listed finite-$b$ examples have elementary expressions. The equality $\phi_{\mathrm{N}}^{-2,1}=\phi_{\mathrm{T}}^{-1,1}$ also shows that different curvature directions and exponents can yield the same preconditioner.

For the exponential target, the normal constructions additionally recover the tangential logarithmic factor in \eqref{eq:exponential-curvatures}: their tangential curvatures are $\log(1+\varrho)/\varrho$ and $\operatorname{arsinh}(\varrho)/\varrho$, both asymptotic to $\log\varrho/\varrho$. In particular, directly interpolating the target's tangential inverse curvature by $\nu'(\varrho)/\varrho=\log(1+\varrho)/\varrho$ recovers $\phi_{\mathrm{N}}^{-1,1}$. This illustrates interpolation beyond pure power laws without introducing a new preconditioner. By contrast, the two tangential constructions with $a=-1$ have tangential curvature asymptotic to $\varrho^{-1}$, without the logarithmic factor.

In the clipping limit, $\psi_{-1,b}(\varrho)\to1/\max\{1,\varrho\}$, so the update direction is $y/\max\{1,\|y\|\}$. The limiting function is linear in $\varrho$ for $\varrho>1$ and loses positive normal curvature; it is therefore outside \cref{prop:power-interpolation-preconditioner}.

The same function of the radius also defines a separable preconditioner $y\mapsto\sum_{i=1}^d \nu(|y_i|)$. Applying this to the NGD, HGD, and AdaGrad-type entries gives SNGD, SHGD, and memoryless coordinatewise AdaGrad-type updates~\citep{oikonomidis2025nonlinearly,oikonomidis2026nonlinearly,adagrad11}. The AdaGrad-type specialization uses only the current squared gradient, with regularization $1$ inside the square root; the radial entry uses its norm-based counterpart~\citep[Example~1.5 and Lemma~1.3]{oikonomidis2025nonlinearly}. The separable NGD construction also corresponds to Adam with both exponential decay parameters set to zero and denominator regularization $1$~\citep{kingma2017adammethodstochasticoptimization}.

\section{Experimental Details and Additional Results}\label[appendix]{app:numerical-details}
We give the preprocessing, initial points, problem regularity, computational environment, and additional results.

\subsection{Data, preprocessing, and initial points}\label[appendix]{app:dataset-preprocessing}
All experiments use the full distributed datasets; sample counts $n$ and optimization dimensions $d$ are specified for each dataset. For \texttt{kin8nm}, we standardize the features and response without adding an intercept. For \texttt{covtype}, let $A\in\RR^{n\times54}$ denote the feature matrix from \texttt{covtype.libsvm.binary.scale}. We map the labels to $\{-1,1\}$ and whiten the 54 features to 53 dimensions without an intercept: retain eigenvalues of $A^\top A/n$ exceeding $10^{-10}$ times the largest, form $W$ from the corresponding eigenvectors divided by the square roots of their eigenvalues, and set $B=AW$. Regularization and initialization use these optimization coordinates. The distributed features of \texttt{a9a} and \texttt{svmguide1} are used without further standardization or an added intercept. For \texttt{LEDGAR}~\citep{tuggener-etal-2020-ledgar} and \texttt{SCOTUS}, we use the distributed LIBSVM TF--IDF features without further transformation or an intercept. Each distributed training file combines the original LexGLUE~\citep{chalkidis-etal-2022-lexglue} training and validation sets; test data are excluded. We define a binary problem by labeling the most frequent class as $+1$ and all others as $-1$, breaking ties by the smallest numeric label. This gives 3661 positive samples (label 47) for \texttt{LEDGAR} and 1371 (label 0) for \texttt{SCOTUS}. These experiments compare optimization methods, not multiclass prediction accuracy. For \texttt{Skin Non-Skin} and \texttt{Rice Cammeo/Osmancik}, we standardize the features and append an unpenalized intercept; no features were removed. Standardization uses the mean and population standard deviation over all samples.

We initialize $x_0=0$ for \texttt{kin8nm}, \texttt{Skin Non-Skin}, and \texttt{Rice Cammeo/Osmancik}, and $x_0=\nabla f(0)/\norm{\nabla f(0)}$ in the whitened coordinates for \texttt{covtype}. For all powered hinge problems, choose $x_0$ as a positive multiple of $\nabla f(0)$ such that $\max_i[1-y_i a_i^\top x_0]_+=10$.

\subsection{Problem definitions and regularity}\label[appendix]{app:numerical-problem-details}\label[appendix]{app:numerical-powered-hinge-details}\label[appendix]{app:numerical-weak-exp-cauchy-details}
For $p>2$, both $t\mapsto\abs{t}^p$ and $t\mapsto[t]_+^p$ are $C^2$. Thus both convex objectives in \cref{sec:numerical-experiments} are $C^2$ and satisfy $\nabla^2f\succeq I/n$. Together with \cref{prop:power-interpolation-preconditioner}, this gives the convex convergence and rate assumptions.

For the weakly convex objective in \cref{sec:numerical-experiments}, the exponential loss is convex and each Cauchy penalty term satisfies
\begin{equation}
    \frac{\mathrm d^2}{\mathrm dt^2}\left[\omega\log\paren*{1+\frac{t^2}{\sigma^2}}\right]
    =\frac{2\omega(\sigma^2-t^2)}{(\sigma^2+t^2)^2}\ge-\frac{\omega}{4\sigma^2}.
\end{equation}
Thus $f$ is $\eta$-weakly convex with $\eta=\omega/(4\sigma^2)=0.025$. For both datasets, the prepared matrix $A$ has full column rank and both classes are present. The exponential-loss Hessian is positive definite, and the Cauchy penalty plus $\eta\|x\|^2/2$ has a positive semidefinite Hessian, so $\nabla^2g_\eta\succ0$. On each sublevel set, the Cauchy penalty bounds all nonintercept coordinates and the exponential loss from both classes bounds the intercept. Hence $f$ is level bounded; smoothness and \cref{prop:power-interpolation-preconditioner} give the remaining weakly convex assumptions.

\subsection{Reference values, timing, and computational environment}\label[appendix]{app:numerical-environment}
For each convex problem, we select the method requiring the fewest gradient evaluations to reach the common stopping criterion, among the eleven compared methods. We continue the selected method for 20,000 additional iterations and use the lowest objective value obtained as $f_{\rm ref}$. For the runtime comparisons, we use per-iteration medians of 22 runs for convex problems and 10 for weakly convex problems, with balanced execution order; runtimes include initialization but exclude reference-value computation.

After the first step, Armijo trials start at twice the previous accepted stepsize and are halved until acceptance, with no fixed upper cap. The local stepsize estimate in \eqref{eq:local-estimates} requires an auxiliary function evaluation but no gradient at that point; this cost is included in runtimes.

We use an Apple M1 CPU (8 cores), 16\,GB memory, and macOS 26.4.1 (arm64), with double-precision arithmetic and one BLAS/OpenMP thread. The run records specify Python 3.13.14, NumPy 2.5.1, SciPy 1.18.0, and pandas 3.0.5; convex runs also use scikit-learn 1.9.0. For power-law targets, $\phi_{\mathrm N}^{a,2}$ need not have an elementary expression; we evaluate it and its gradient numerically using SciPy's \texttt{hyp2f1} in the $\ell_p$ and powered hinge loss experiments. Other tested preconditioners use elementary formulas.

Numerical safeguards use a $10^{-15}$ floor in Bregman ratios, clip negative Bregman values to zero, and select the growth candidate when curvature denominators are at most $10^{-15}$. Removing these safeguards in 13 tested configurations left trajectories unchanged through the first relative-gradient crossing of $10^{-6}$.

\subsection{Additional numerical results}\label[appendix]{app:numerical-additional-results}
We use $\phi_{\mathrm{N}}^{a,1},\phi_{\mathrm{N}}^{a,2},\phi_{\mathrm{T}}^{a,1},\phi_{\mathrm{T}}^{a,2}$ from \cref{tab:preconditioner-recipes-full}, with $a$ determined by each target. Each figure compares these four preconditioners under Adaptive DPGD and LS, giving eight curves. LS denotes DPGD linesearch~\citep{maddison2021} for convex problems and preconditioned Armijo for weakly convex problems. Colors identify preconditioners, solid lines denote Adaptive DPGD, and dashed lines denote LS; distinct markers help distinguish overlapping curves. Holding the preconditioner fixed compares stepsize rules, while holding the rule fixed compares preconditioners. Convex plots show relative gradient norms and objective gaps against gradient evaluations and runtime; weakly convex plots show relative gradient norms against both horizontal axes. All curves retain the same gradient-based stopping criterion as the main comparison.

\subsubsection{\texorpdfstring{$\ell_p$}{lp} loss}
For \texttt{kin8nm} from OpenML~\citep{kin8nmOpenML} ($n=8192$, $d=8$), we set $p=4$.

\begin{figure}[!htb]
    \centering
    \includegraphics[width=\linewidth]{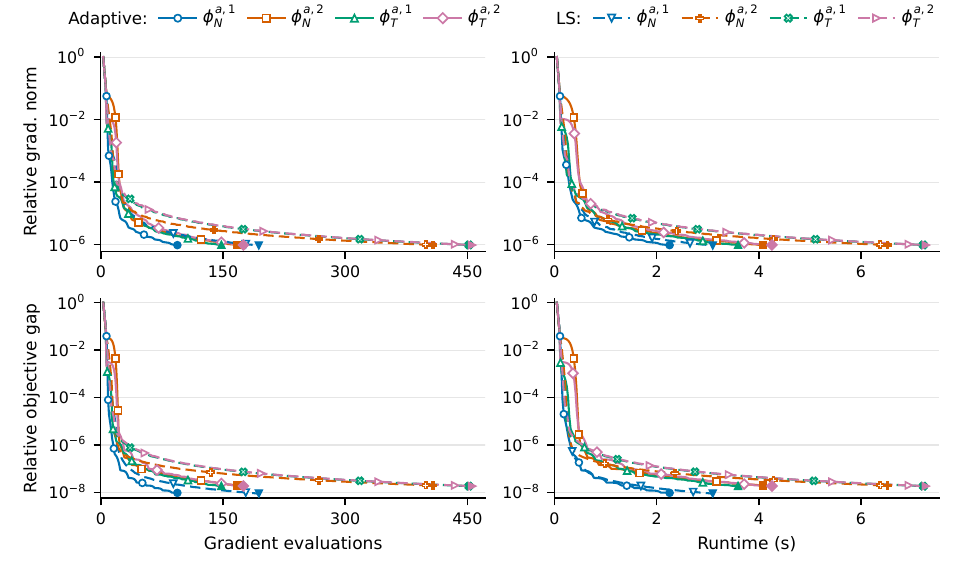}
    \caption{$\ell_6$ loss on \texttt{covtype}. Rows: relative gradient norm and relative objective gap. Columns: gradient evaluations and runtime.}
    \label{fig:detail-covtype}
\end{figure}

\begin{figure}[!htb]
    \centering
    \includegraphics[width=\linewidth]{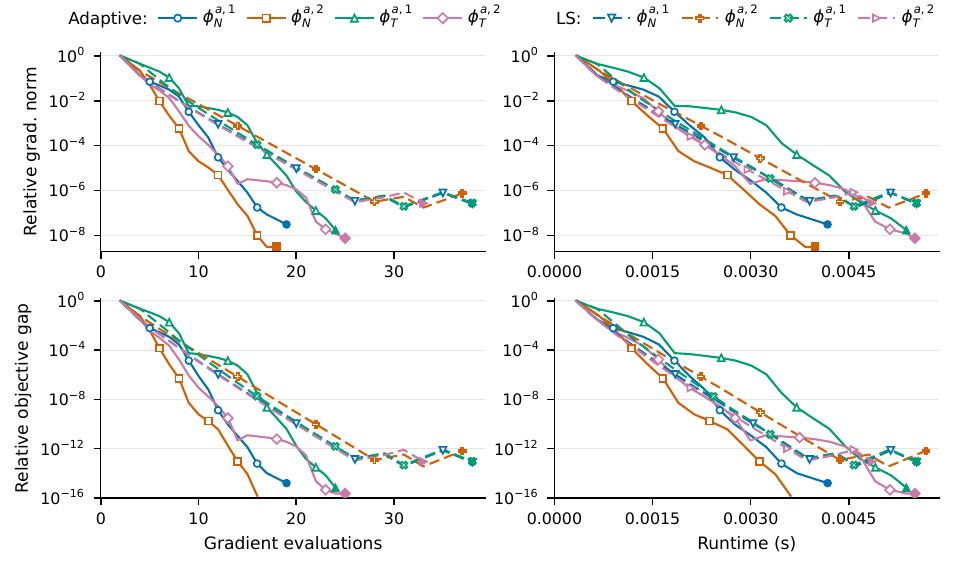}
    \caption{$\ell_4$ loss on \texttt{kin8nm}. Rows and columns follow \cref{fig:detail-covtype}.}
    \label{fig:detail-kin8nm}
\end{figure}

\subsubsection{Powered hinge loss}\label[appendix]{subsubsec:numerical-powered-hinge}
On \texttt{LEDGAR} and \texttt{SCOTUS}, Adaptive DPGD reaches the gradient stopping criterion sooner than LS with the same preconditioner (\cref{fig:detail-ledgar,fig:detail-scotus}). The four Adaptive variants are close in runtime.
We also retain the full comparisons on \texttt{a9a}~\citep{LIBSVM,adult} ($n=32561$, $d=123$) and \texttt{svmguide1}~\citep{LIBSVM} ($n=3089$, $d=4$). Adaptive DPGD reaches the gradient stopping criterion with fewer gradient evaluations and less runtime than LS for each preconditioner (\cref{fig:detail-a9a,fig:detail-svmguide1}).

\begin{figure}[!htb]
    \centering
    \includegraphics[width=\linewidth]{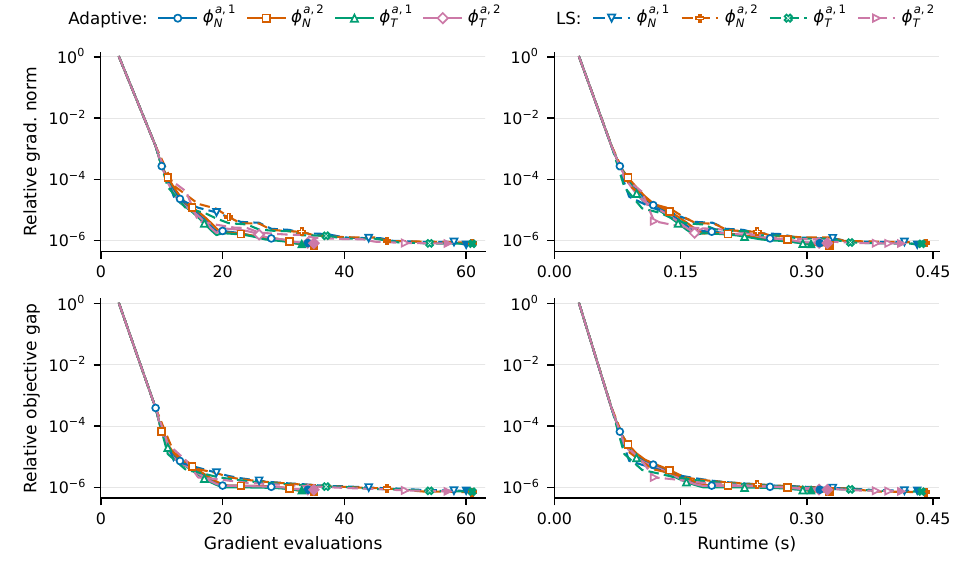}
    \caption{Powered hinge loss ($p=6$) on \texttt{LEDGAR}. Rows and columns follow \cref{fig:detail-covtype}.}
    \label{fig:detail-ledgar}
\end{figure}

\begin{figure}[!htb]
    \centering
    \includegraphics[width=\linewidth]{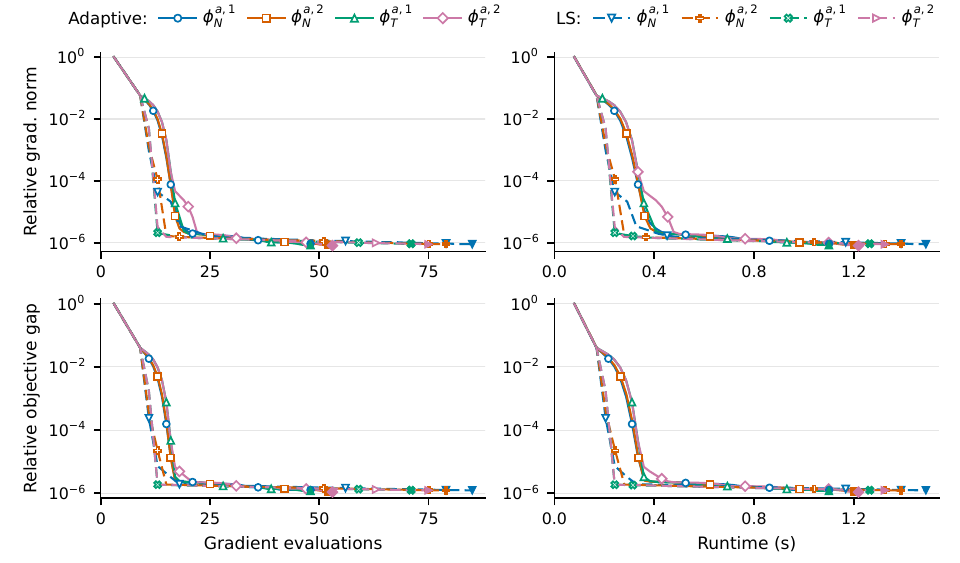}
    \caption{Powered hinge loss ($p=6$) on \texttt{SCOTUS}. Rows and columns follow \cref{fig:detail-covtype}.}
    \label{fig:detail-scotus}
\end{figure}

\begin{figure}[!htb]
    \centering
    \includegraphics[width=\linewidth]{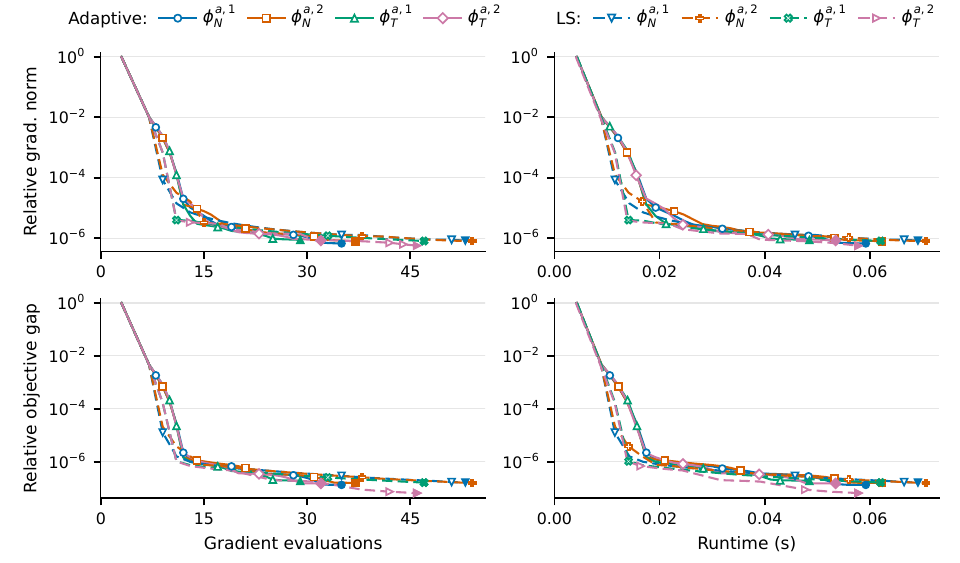}
    \caption{Powered hinge loss ($p=6$) on \texttt{a9a}. Rows and columns follow \cref{fig:detail-covtype}.}
    \label{fig:detail-a9a}
\end{figure}

\begin{figure}[!htb]
    \centering
    \includegraphics[width=\linewidth]{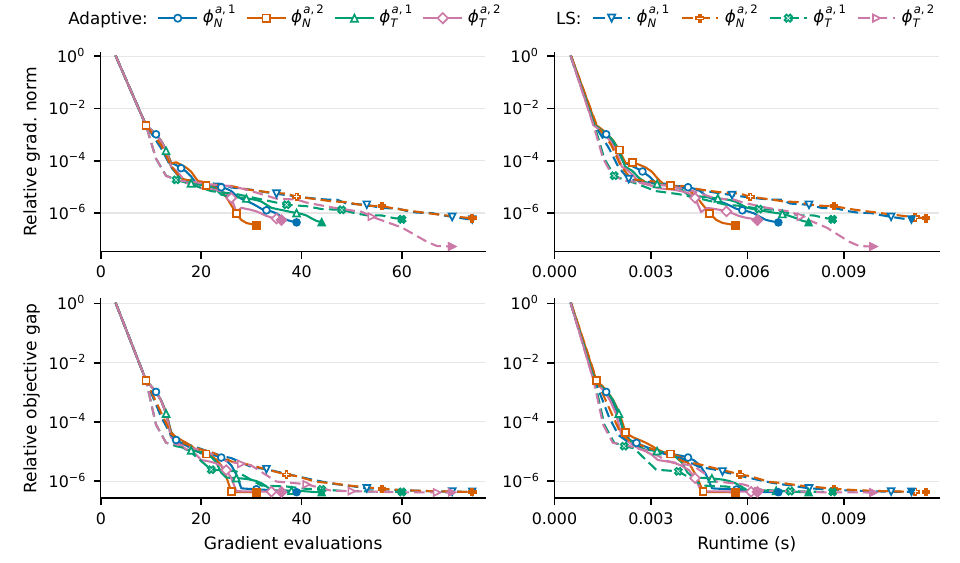}
    \caption{Powered hinge loss ($p=6$) on \texttt{svmguide1}. Rows and columns follow \cref{fig:detail-covtype}.}
    \label{fig:detail-svmguide1}
\end{figure}

\subsubsection{Exponential loss with Cauchy penalty}
On \texttt{Skin Non-Skin}, all Adaptive DPGD variants substantially outperform preconditioned Armijo linesearch (\cref{fig:detail-skin_nonskin}). On \texttt{Rice Cammeo/Osmancik}, Adaptive DPGD variants other than $\phi_{\mathrm{T}}^{-1,1}$ substantially outperform preconditioned Armijo linesearch (\cref{fig:detail-rice_cammeo}).

\begin{figure}[!htb]
    \centering
    \includegraphics[width=\linewidth]{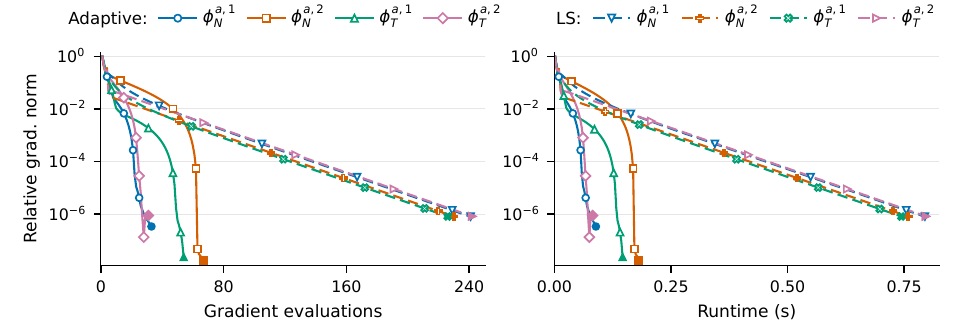}
    \caption{Exponential loss with Cauchy penalty on \texttt{Skin Non-Skin}. Relative gradient norms against gradient evaluations (left) and runtime (right).}
    \label{fig:detail-skin_nonskin}
\end{figure}

\begin{figure}[!htb]
    \centering
    \includegraphics[width=\linewidth]{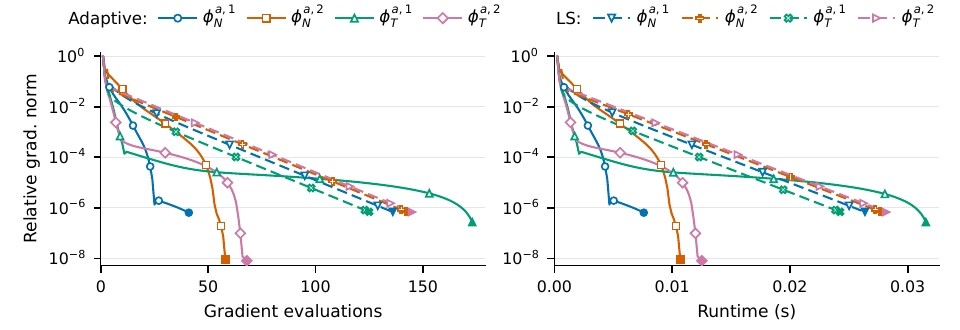}
    \caption{Exponential loss with Cauchy penalty on \texttt{Rice Cammeo/Osmancik}. Relative gradient norms against gradient evaluations (left) and runtime (right).}
    \label{fig:detail-rice_cammeo}
\end{figure}

\clearpage
\subsubsection{Summary of observations}\label[appendix]{app:numerical-observations}
Under Adaptive DPGD, the $p$-norm regression preconditioner of \citet{maddison2021} is $\phi_{\mathrm{T}}^{a,2}$, whereas $\phi_{\mathrm{N}}^{a,2}$ performs better on \texttt{kin8nm} (\cref{fig:detail-kin8nm}) and $\phi_{\mathrm{N}}^{a,1}$ on \texttt{covtype} (\cref{fig:detail-covtype}). Likewise, $\phi_{\mathrm N}^{-1,1}$ outperforms their exponential-penalty preconditioner $\phi_{\mathrm T}^{-1,1}$ on both weakly convex problems (\cref{fig:detail-skin_nonskin,fig:detail-rice_cammeo}). Together with the powered hinge results, these observations illustrate the practical value of the design freedom allowed by our local conditions: alternative preconditioners can be constructed and applied across different losses without establishing a problem-specific global dual relative smoothness bound.

The numerical evaluation of $\phi_{\mathrm N}^{a,2}$ through \texttt{hyp2f1} does not dominate runtime in the convex experiments. With Adaptive DPGD, all evaluations of this preconditioner together account for less than $9\%$ of total runtime in each of the six convex problems (median share over 22 timing repeats). Its gains in gradient evaluations are retained in runtime on \texttt{kin8nm} and \texttt{svmguide1}, where it reaches the stopping criterion fastest among the four tested preconditioners under Adaptive DPGD.

Overall, the normal constructions $\phi_{\mathrm N}^{a,1}$ and $\phi_{\mathrm N}^{a,2}$ provide effective choices among the tested preconditioners, with good performance in both gradient evaluations and runtime across several losses.

The comparison with DPGD linesearch also shows the benefit of avoiding repeated trial evaluations. After initialization, Adaptive DPGD uses one gradient evaluation per update, whereas the linesearch of \citet{maddison2021} evaluates gradients at trial points. On \texttt{covtype} and all four powered hinge datasets, Adaptive DPGD reaches the stopping criterion with fewer gradient evaluations and less runtime for each of the four preconditioners. For example, with $\phi_{\mathrm N}^{a,1}$ on \texttt{covtype}, linesearch takes fewer updates but uses 194 gradient evaluations and 3.10 seconds, compared with 94 and 2.25 seconds for Adaptive DPGD. This illustrates how a linesearch-free update can improve runtime even when more updates are needed.

\clearpage

\subsection{Full comparisons for the main experiments}\label[appendix]{app:main-full-comparisons}

\begin{figure}[!htb]
    \centering
    \includegraphics[width=\linewidth]{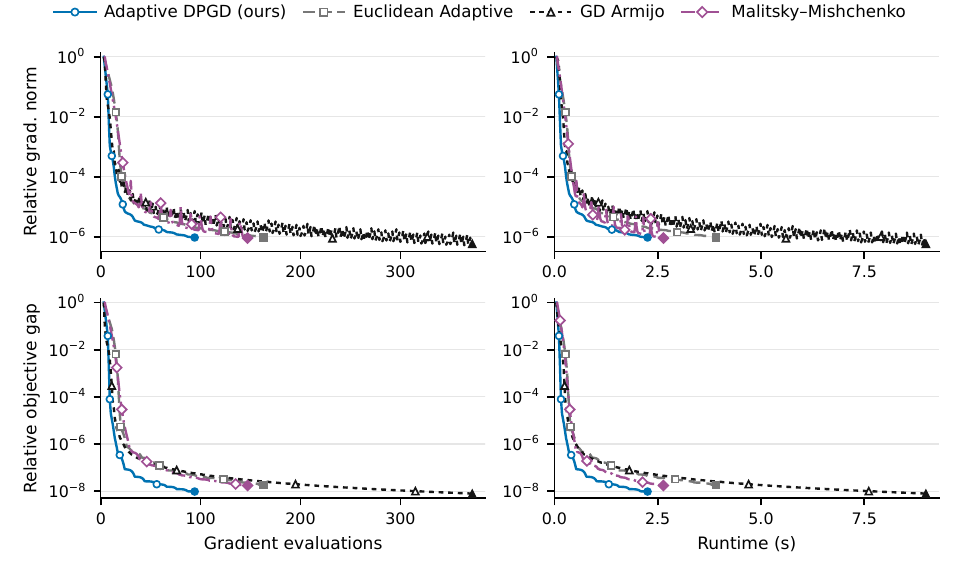}
    \caption{Full comparison for \cref{fig:numerical-main}(a) on \texttt{covtype}. Rows: relative gradient norm and relative objective gap. Columns: gradient evaluations and runtime.}
    \label{fig:main-full-covtype}
\end{figure}

\begin{figure}[!htb]
    \centering
    \includegraphics[width=\linewidth]{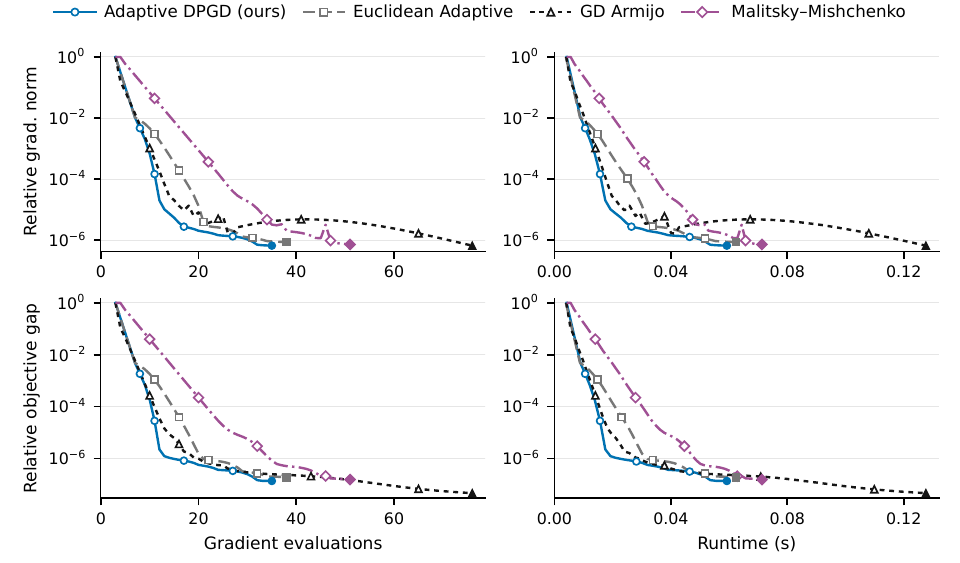}
    \caption{Full comparison for \cref{fig:numerical-main}(b) on \texttt{a9a}. Rows and columns follow \cref{fig:main-full-covtype}.}
    \label{fig:main-full-a9a}
\end{figure}

\clearpage

\begin{figure}[!htb]
    \centering
    \includegraphics[width=\linewidth]{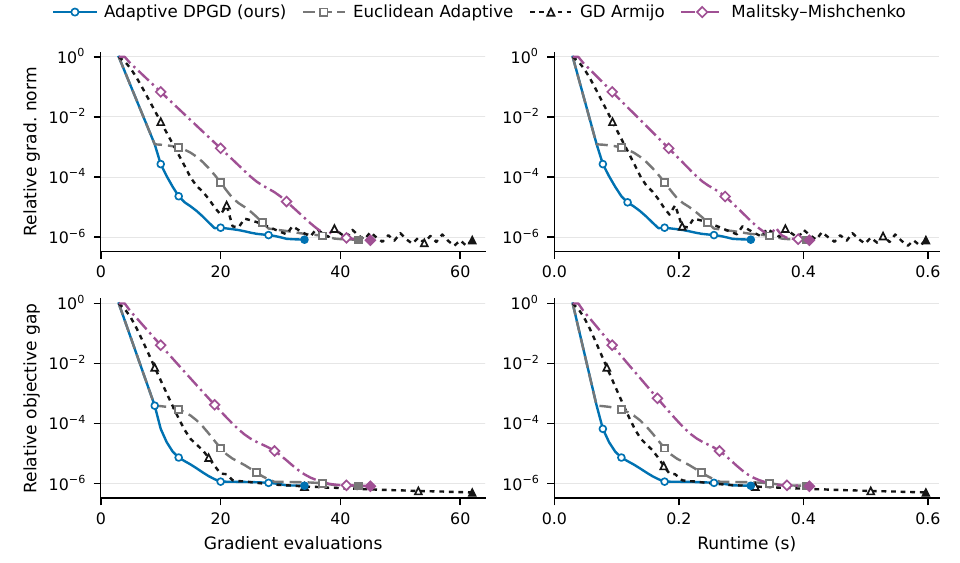}
    \caption{Full comparison for \cref{fig:numerical-main}(c) on \texttt{LEDGAR}. Rows and columns follow \cref{fig:main-full-covtype}.}
    \label{fig:main-full-ledgar}
\end{figure}

\begin{figure}[!htb]
    \centering
    \includegraphics[width=\linewidth]{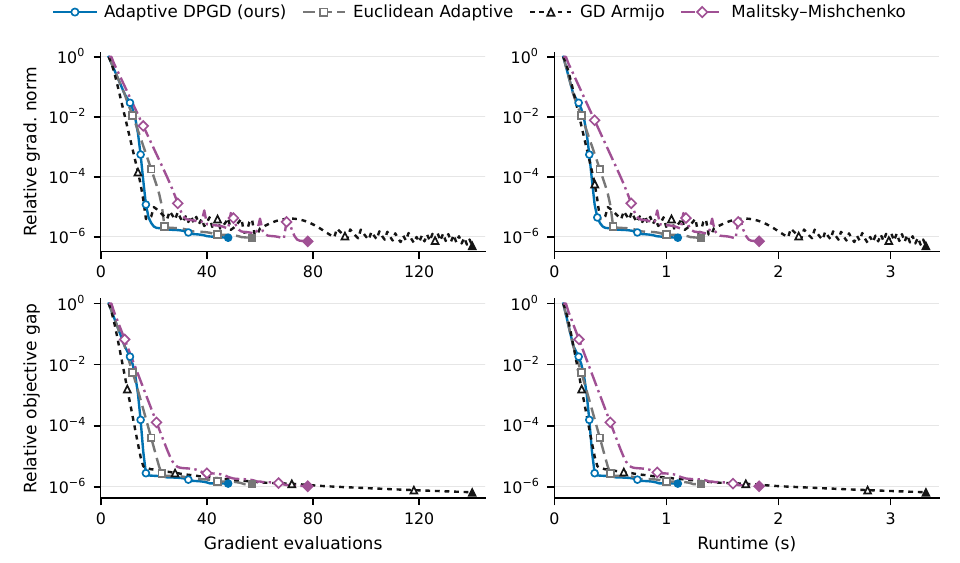}
    \caption{Full comparison for \cref{fig:numerical-main}(d) on \texttt{SCOTUS}. Rows and columns follow \cref{fig:main-full-covtype}.}
    \label{fig:main-full-scotus}
\end{figure}

\clearpage

\begin{figure}[!htb]
    \centering
    \includegraphics[width=\linewidth]{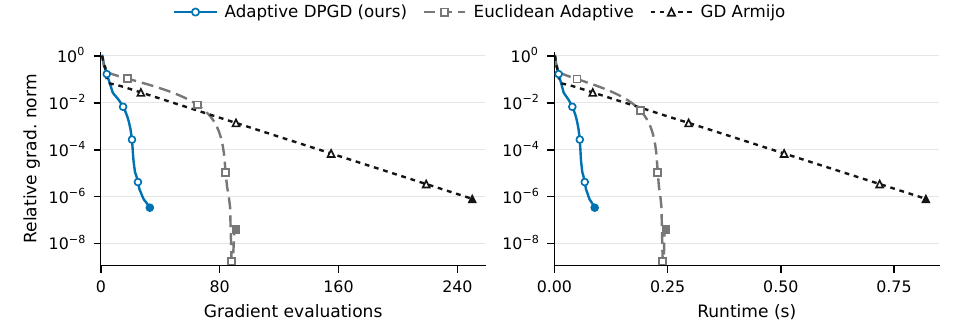}
    \caption{Full comparison for \cref{fig:numerical-main}(e) on \texttt{Skin Non-Skin}. Relative gradient norms against gradient evaluations (left) and runtime (right).}
    \label{fig:main-full-skin_nonskin}
\end{figure}

\begin{figure}[!htb]
    \centering
    \includegraphics[width=\linewidth]{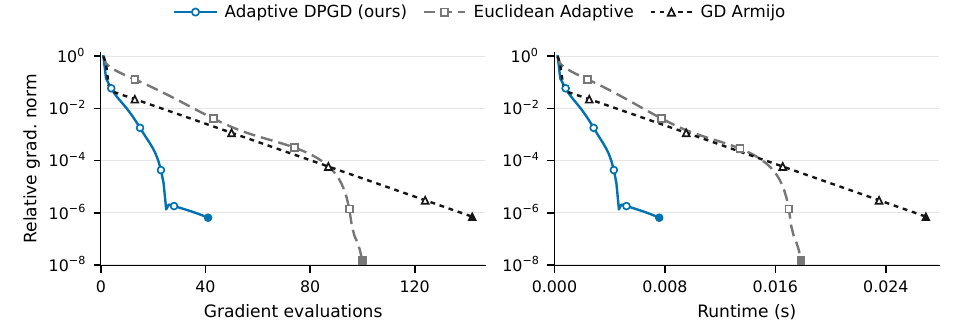}
    \caption{Full comparison for \cref{fig:numerical-main}(f) on \texttt{Rice Cammeo/Osmancik}. Relative gradient norms against gradient evaluations (left) and runtime (right).}
    \label{fig:main-full-rice_cammeo}
\end{figure}

\clearpage
\section{Extended Related Work}\label[appendix]{app:extended-related-work}\label[appendix]{app:comparison-notes}\label[appendix]{app:preconditioning-related-work}
\subsection{Non-Euclidean methods}
Mirror descent~\citep{Nemirovski1983-yw} provides a classical framework for non-Euclidean geometry. Reference functions allow Bregman proximal gradient methods to control first-order model errors beyond global Lipschitz gradient continuity, with guarantees for convex~\citep{Bauschke2017-hg}, nonconvex~\citep{Bolte2018-zt}, and difference-of-convex problems~\citep{Takahashi2022-ml}. Quadratic approximations of Bregman divergences further simplify proximal subproblems~\citep{Takahashi2024-ej,Fujiki2025-do}.
For matrix factorization, reference functions tailored to the loss yield tractable simultaneous updates, including inertial Bregman proximal gradient methods~\citep{Mukkamala2019-mk} and majorization--minimization for nonnegative factorization with Kullback--Leibler loss~\citep{Takahashi2026-rv}. Bregman geometry also guides adaptive Frank--Wolfe stepsizes for weakly convex objectives~\citep{Takahashi2026-ch}.

For Legendre convex $f$, DPGD~\citep{maddison2021} can be viewed as mirror descent on $\phi$ in gradient space, with $f^*$ as the reference function. Here, the superscript $*$ denotes convex conjugation. It also relates preconditioning to inverse objective curvature: global dual relative smoothness compares $D_\phi(\nabla f(x),\nabla f(y))$ with $D_f(y,x)$. Its examples for $p$-norm regression and exponential penalties correct nonquadratic growth while remaining quadratic near the origin. Their guarantees require a global comparison for each objective--preconditioner pair. Our recipe makes this curvature-based construction systematic, while the local conditions in \cref{prop:power-interpolation-preconditioner} permit the resulting preconditioners without a problem-specific global Hessian bound.

\citet{oikonomidis2025nonlinearly,oikonomidis2026nonlinearly} unify normalization, clipping, and memoryless AdaGrad/Adam variants using a reference function $\varphi$, with $\nabla\varphi^*$ corresponding to our $\nabla\phi$. They motivate HGD through a nonlinear global upper bound based on $\varphi(y)=\cosh(\|y\|)-1$. Our construction instead derives the same preconditioner from the normal inverse curvature of an exponential model, identifying the curvature it corrects and extending the procedure to other targets (\cref{tab:preconditioner-recipes}).

The DPGD and nonlinear preconditioning guarantees above use global relative, anisotropic, or generalized smoothness conditions. DPGD linesearch also evaluates gradients at trial points. Adaptive DPGD uses local conditions and updates the stepsize directly from local curvature estimates, without a linesearch after initialization.

\subsection{Further adaptive methods}
Beyond the methods discussed in the main text, \citet{yagishita2025simplelinesearchfreefirstordermethods} study auto-conditioning based on local curvature estimates, and \citet{HoaiThai2024} give a linesearch-free nonconvex proximal-gradient rule under additional structural assumptions. \citet{PatelDiminishing24} analyze nonconvex GD with diminishing stepsizes without global Lipschitz smoothness. Our focus is an explicit nonlinearly preconditioned update with adaptive stepsizes under local regularity and known weak convexity.

For convex optimization, \citet{Li2025-cb} develop a linesearch-free auto-conditioned fast gradient method with uniformly optimal rates across smoothness classes, while \citet{Suh2025-eq} propose a parameter-free adaptive Nesterov method without linesearch. \citet{Takahashi2026-tp} combines adaptive stepsizes with conditional gradient sliding for projection- and linesearch-free acceleration. \citet{Park2026-cd} extend linesearch-free adaptive gradient descent to Riemannian manifolds, with applications to Gaussian variational inference.

Adaptive accelerated mirror descent~\citep{xuChenAAMD2026} combines mirror descent and DPGD under global dual relative smoothness and additional curvature conditions, with backtracking when needed. Accelerating our local, linesearch-free methods remains a separate question.

\end{document}